\newtheorem{theorem}{Theorem}[section]
\newtheorem{lemma}[theorem]{Lemma}
\newtheorem{corollary}[theorem]{Corollary}
\newtheorem{proposition}[theorem]{Proposition}
\theoremstyle{definition}
\newtheorem{example}[theorem]{Example}
\newtheorem{remark}[theorem]{Remark}
\newcommand{\Irr}{\mathop{\rm Irr}\nolimits}
\newcommand{\Hom}{\mathop{\rm Hom}\nolimits}
\newcommand{\Aut}{\mathop{\rm Aut}\nolimits}
\newcommand{\Out}{\mathop{\rm Out}\nolimits}
\newcommand{\Pic}{\mathop{\rm Pic}\nolimits}
\newcommand{\Perf}{{\rm Perf}}
\newcommand{\Id}{{\rm Id}}
\newcommand{\CF}{{\rm CF}}
\newcommand{\prj}{{\rm prj}}
\newcommand{\NN} {\mathbb{N}}
\newcommand{\ZZ} {\mathbb{Z}}
\newcommand{\cA} {\mathcal{A}}
\newcommand{\cO} {\mathcal{O}}
\newcommand{\cT} {\mathcal{T}}
\newcommand{\cL} {\mathcal{L}}
\newcommand{\cF} {\mathcal{F}}
\newcommand{\cB} {\mathcal{B}}
\def\L{\Lambda}
\def\bigcp{\mathop{\mathchoice 
 {\hbox{\sf\Large\lower 0.1\baselineskip\hbox{Y}}}%
 {\hbox{\sf\large\lower 0.1\baselineskip\hbox{Y}}}%
 {\hbox{\sf\normalsize\lower 0.1\baselineskip\hbox{Y}}}%
 {\hbox{\sf\tiny\lower 0.1\baselineskip\hbox{Y}}}%
}}
\def\bigtimes{\mathop{\mathchoice 
 {\hbox{\sf\Large\lower 0.1\baselineskip\hbox{X}}}%
 {\hbox{\sf\large\lower 0.1\baselineskip\hbox{X}}}%
 {\hbox{\sf\normalsize\lower 0.1\baselineskip\hbox{X}}}%
 {\hbox{\sf\tiny\lower 0.1\baselineskip\hbox{X}}}%
}}
\def\Sym(#1){\mathop{\rm Sym}(#1)}
\def\Sym(#1){S_{#1}}
\def\diag(#1){\mathop{\rm diag}(#1)}
\newenvironment{enumerate*}{%
 \begin{enumerate}%
 }%
 {\end{enumerate}}
\begin{document}

\title{Morita equivalence classes of $2$-blocks with abelian defect groups of rank $4$  \footnote{This research was supported by the EPSRC grants no. EP/M015548/1 and EP/T004606/1.}}

\author{Charles W. Eaton\footnote{Department of Mathematics, University of Manchester, Manchester M13 9PL. Email: charles.eaton@manchester.ac.uk} and Michael Livesey\footnote{Department of Mathematics, University of Manchester, Manchester M13 9PL. Email: dr.m.livesey@gmail.com}}

\maketitle


\begin{abstract}
We classify all $2$-blocks with abelian defect groups of rank $4$ up to Morita equivalence. The classification holds for blocks over a suitable discrete valuation ring as well as for those over an algebraically closed field. An application is that Brou\'{e}'s abelian defect group conjecture holds for all blocks under consideration here.

Keywords: Morita equivalence; finite groups; block theory; derived equivalence.
\end{abstract}


\section{Introduction}

Let $p$ be a prime and $(K,\cO,k)$ be a $p$-modular system with $k$ algebraically closed, and let $P$ be a finite $p$-group. It is known by work culminating in~\cite{eel20} that if $P$ is an abelian $2$-group, then there are only finitely many Morita equivalence classes amongst all blocks of $\cO G$ for all finite groups $G$ with defect group $D$ isomorphic to $P$, i.e., that Donovan's conjecture holds for such $P$. This suggests the problem of classifying the Morita equivalence classes of blocks that arise for given abelian $2$-groups $P$. The Morita equivalence classes are already known without the use of the classification when $P$ is abelian and $\Aut(P)$ is a $2$-group and when $P$ is a Klein four group. In the former case every block must be nilpotent by~\cite{pu88} since $P$ controls fusion for the block, and in the latter there are three Morita equivalence classes by~\cite{li94} (in fact these are the only source algebra equivalence classes by~\cite{cekl11}). In~\cite{ekks14} the classification of finite simple groups was applied to describe the $2$-blocks of quasisimple groups with abelian defect groups. This has been used to classify the Morita equivalence classes of blocks with defect groups isomorphic to $P$ in the following cases. When $P$ is elementary abelian of order $16$ or $32$, there are $16$ or $34$ Morita equivalence classes by respectively~\cite{ea19} and~\cite{ar21}. When $P$ is elementary abelian of order $64$, there are $81$ Morita equivalence classes containing a principal block by~\cite{ar20}. Morita equivalence classes of blocks when $P$ is an abelian $2$-group of rank at most three are determined in~\cite{el18} and~\cite{wzz18}. Further classifications are obtained when we place restrictions on the inertial quotient of the blocks we consider, as in~\cite{mck20} and~\cite{am20}, where it is assumed that the inertial quotient contains an element of maximal order, or is a subgroup of a cyclic group of maximal order. Note that for odd primes, full classifications of Morita equivalence classes of blocks with abelian defect groups are currently only known when $P$ is a cyclic $3$-group or $C_5$. Morita equivalence classes have also been determined for some classes of nonabelian defect groups, which for completeness we briefly list here. Of the tame blocks with nonabelian defect groups, complete classifications are only known for dihedral $2$-groups (over $k$, for which see~\cite{er87} and~\cite{ma22}) and $Q_8$ (over $\cO$, for which see~\cite{ei16}). Aside from $p$-groups with only one saturated fusion system (for which blocks must be nilpotent), Morita equivalence classes of blocks with respect to $\cO$ have been determined when $P$ is a Suzuki $2$-group (see~\cite{ea24}), an extraspecial $p$-group of order $p^3$ and exponent $p$ for $p \geq 5$ (see~\cite{ae23}), or is in a class of minimal nonabelian $2$-groups as in~\cite{eks12}. 

Here we determine the Morita equivalence classes of blocks when $p=2$ and $P$ is abelian of rank at most four. There are two main problems to overcome in the reduction to quasisimple groups in order to apply~\cite{ekks14}: the cases of a normal subgroup of odd index and of a normal subgroup of index two. In the former case we use Picard groups and crossed products to deduce possible Morita equivalence classes for the block of the over group. In the latter we use a combination of a method developed in~\cite{wzz18} and one of our own.  

Let $D \cong C_{2^{n_1}} \times C_{2^{n_2}} \times C_{2^{n_3}} \times C_{2^{n_4}}$, where $n_1,\ldots,n_4 \geq 0$. The conjugacy classes of odd order subgroups $E$ of $\Aut(D)$ have representatives as given in Table \ref{Inertial_quotient:table}, and correspond to the possible inertial quotients for blocks with defect group $D$ (where by inertial quotient, defined later, we consider also the action on $D$). By~\cite[5.2.3]{gor} we may write $D=[D,E] \times C_D(E)$. To simplify notation we assume that labeling is chosen so that $[D,E] \cong C_{2^{n_1}} \times \cdots \times C_{2^{n_i}}$ and $C_D(E) \cong C_{2^{n_{i+1}}} \times \cdots \times C_{2^{n_4}}$ for some $i$. If $B$ is a block with defect group $D$ and inertial quotient $E$, then we say that $B$ is of type $E$, where we use the notation of Table \ref{Inertial_quotient:table} to distinguish between isomorphic but non-conjugate subgroups of $\Aut(D)$. Note that the case $(C_3)_2$ represents the simultaneous action of $C_3$ on $C_{2^{n_1}} \times C_{2^{n_2}}$ and on $C_{2^{n_3}} \times C_{2^{n_4}}$, which for  $n_1=n_2=n_3=n_4$, also represents a subgroup of $C_{15}$. 

\begin{table}
\centering
\begin{tabular}{|l|l|l|}
\hline
Inertial quotient & Notes & Restrictions on $D$ \\
\hline
$1$ & & None \\
$(C_3)_1$ & $C_D(E)$ has rank $2$ & $n_1=n_2$ \\
$(C_3)_2$ & $C_D(E)=1$ & $n_1=n_2$, $n_3=n_4$ \\
$C_3 \times C_3$ & $C_D(E)=1$ & $n_1=n_2$, $n_3=n_4$ \\
$C_5$ & $C_D(E)=1$ & $n_1=n_2=n_3=n_4$ \\
$C_{15}$ & $C_D(E)=1$ & $n_1=n_2=n_3=n_4$ \\
$C_7$ & $C_D(E)$ cyclic & $n_1=n_2=n_3$ \\
$C_7 \rtimes C_3$ & $C_D(E)$ cyclic & $n_1=n_2=n_3$ \\
\hline
\end{tabular}
\caption{Possible inertial quotients}
\label{Inertial_quotient:table}
\end{table}

We now give the classification. Throughout this paper, $G_n$ will denote the nonabelian group $(C_{2^n} \times C_{2^n}) \rtimes C_3$. 

Following~\cite{pu11} a block is called \emph{inertial} if it is basic Morita equivalent to a block with normal defect group.

\begin{theorem}
\thlabel{main_theorem}
Let $G$ be a finite group and $B$ be a block of $\cO G$ with defect group $D \cong C_{2^{n_1}} \times C_{2^{n_2}} \times C_{2^{n_3}} \times C_{2^{n_4}}$ as above. Let $E$ be an inertial quotient of $B$, with notation as in Table \ref{Inertial_quotient:table}.

\begin{enumerate}[(a)]
\item\thlabel{singer} If $n_1=n_2$, $n_3=n_4$ and $E$ acts as $1$, $(C_3)_2$ or $C_5$, then $B$ is inertial.
\item \begin{enumerate}[(i)]
 \item If $E$ acts as $(C_3)_1$, then $B$ is Morita equivalent to one of $\cO(D \rtimes E)$ or $B_0(\cO(A_5 \times C_{2^{n_3}} \times C_{2^{n_4}}))$.
 \item If $E$ acts as $C_7$, then $B$ is Morita equivalent to $\cO (D \rtimes C_7)$ or $B_0(\cO(SL_2(8) \times C_{2^{n_4}}))$.
 \item If $E$ acts as $C_7 \rtimes C_3$, then $B$ is Morita equivalent to $\cO (D \rtimes (C_7 \rtimes C_3))$, $B_0(\cO(\Aut(SL_2(8)) \times C_{2^{n_4}}))$ or $B_0(\cO (J_1 \times C_{2^{n_4}}))$.
  \item If $E$ acts as $C_{15}$, then either $B$ is inertial or $B$ is basic Morita equivalent to $B_0(\cO SL_2(16))$.
  \item Suppose $E$ acts as $C_3 \times C_3$, with $n_1=n_2$ and $n_3=n_4$. Then $B$ is Morita equivalent to one of $B_0(\cO(G_{n_1} \times G_{n_3}))$, $B_0(\cO(A_5 \times G_{n_3}))$, $B_0(\cO(A_5 \times A_5))$ or a nonprincipal block of $D \rtimes 3_+^{1+2}$, where $Z(3_+^{1+2})$ acts trivially on $D$.
\end{enumerate}
\end{enumerate}
\end{theorem}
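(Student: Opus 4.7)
The plan is a minimal-counterexample argument: fix $(G,B)$ of minimal order for which the conclusion fails and use a sequence of reductions to force $G$ into a structure amenable to the quasisimple classification of~\cite{ekks14}. Standard Fong--Reynolds reductions, which preserve Morita equivalence, let us assume $O_{2'}(G)\le Z(G)$ and $B$ quasiprimitive. If $G$ has a proper normal subgroup $N$ covered by $B$ with $[G:N]$ odd, we realise $B$ as a crossed product of the covered block $b$ by $G/N$ via the outer Picard group of $b$: since the outer Picard group of each candidate target algebra appearing in the theorem is (or can be made to be) sufficiently well understood, any such crossed product again lies on the list. When $[G:N]=2$ we invoke the method of~\cite{wzz18} together with an extension of it tailored to the current setting, in order to show that the index-$2$ extension keeps us within the list.

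Once these reductions are exhausted, $G$ modulo its odd-order centre has generalised Fitting subgroup a central product of quasisimple groups whose $2$-blocks carry pieces of the defect group $D$. The classification in~\cite{ekks14} of $2$-blocks of quasisimple groups with abelian defect groups (here of rank at most $4$) is short and all contributing candidates can be enumerated. For each inertial quotient $E$ in Table~\ref{Inertial_quotient:table} we check which combinations of quasisimple components are compatible with the given $D$ and $E$, compute the Morita equivalence class of the resulting block, and verify it appears on the list. Part~(a) with $E=1$ is immediate since the block is nilpotent and hence inertial by~\cite{pu88}, and with $E=(C_3)_2$ or $C_5$ it follows from the Singer-type classifications in~\cite{mck20,am20}. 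The cases $(C_3)_1$, $C_7$, $C_7\rtimes C_3$, $C_{15}$ and $C_3\times C_3$ each produce a short explicit list of quasisimple Morita sources, corresponding to blocks of $A_5$, $SL_2(8)$, $\Aut(SL_2(8))$, $J_1$, $SL_2(16)$ and the products appearing in~(b)(v); together with the inertial class and the exceptional block of $D \rtimes 3_+^{1+2}$ these account for exactly the classes listed.

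The main obstacle is the index-$2$ extension step. Unlike extensions by odd-order groups, where Picard-group computations on each Morita class cleanly control crossed-product outcomes, an index-$2$ extension can in principle create genuinely new Morita classes, and demonstrating that none of these escapes our list requires careful bookkeeping of characters, fusion data, and local structure around an involution in $G/N$. A secondary technical hurdle is computing the relevant Picard groups precisely enough: for algebras such as $B_0(\cO(A_5 \times G_n))$ or the non-principal block of $D \rtimes 3_+^{1+2}$ one must pin down the outer Picard group exactly in order to rule out spurious extensions, and assembling these computations across every target algebra on the list accounts for much of the case-by-case labour.
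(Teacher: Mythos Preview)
Your plan is essentially the paper's own strategy: reduce to a quasiprimitive block with $O_{2'}(G)\le Z(G)$, analyse $F^*(G)$ and its components via \cite{ekks14}, and then climb back up through normal subgroups using Picard-group/crossed-product arguments for odd index and a separate technique for index $2$. The architecture and the identification of the two main obstacles (index-$2$ extensions and Picard-group computations) match the paper.

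Two points where the paper's execution differs from what you anticipate are worth flagging. First, the index-$2$ step is not handled by ``bookkeeping of characters, fusion data, and local structure around an involution'': the paper instead computes the group of perfect self-isometries of $\cO(P\times A_4)$ explicitly (showing every such isometry factors as an $\cO$-algebra automorphism of $\cO P$ tensored with one of the $48$ self-isometries of $\cO A_4$), and uses this rigidity to transport the sign-character self-isometry of $B$ to one of $B_0(\cO(P\times \cA))$, thereby matching up graded units of the centres and invoking Marcus' criterion. This is a genuinely different mechanism from the \cite{wzz18} argument, and it only applies to the $(C_3)_1$ Morita classes; the inertial cases with cyclic free-acting $E$ are the ones where \cite{wzz18} is used directly. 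Second, the paper explicitly does \emph{not} compute $\Pic(B_0(\cO(A_5\times A_5)))$; that case is instead dispatched in the two-component analysis by observing $\Pic(B_0(\cO A_5))\cong C_2$, so each $L_i$ is already in $G[b_i]$ and $G=G[b_E]$, forcing $[G:E(G)]$ odd and $B$ source-algebra equivalent to $b_E$. Your proposal to ``pin down the outer Picard group exactly'' for every target would run into this unknown.
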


\begin{remark}
(i) Nonprincipal blocks of $D \rtimes 3_+^{1+2}$ and $D \rtimes 3_-^{1+2}$ as above will be shown to be Morita equivalent in \thref{faithfulblocks}.

(ii) Implicit in the statement of \thref{main_theorem} is the assertion that for these defect groups Morita equivalent blocks have the same fusion (i.e., the same inertial quotient).

(iii) Cases (a) and (b)(iv) of \thref{main_theorem} have been proved in~\cite{mck20} and~\cite{am20}.
\end{remark}


A consequence is the following, that Brou\'{e}'s abelian defect group conjecture holds for the blocks under consideration:

\begin{corollary}
\thlabel{broue}
Let $G$ be a finite group and $B$ be a $2$-block of $\cO G$ with abelian defect group $D$ of rank at most four. Let $b$ be the Brauer correspondent of $B$ in $\cO N_G(D)$. Then $B$ is derived equivalent to $b$.
\end{corollary}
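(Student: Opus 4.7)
The plan is to use \thref{main_theorem} to reduce the verification of Broué's conjecture for $B$ to a short, explicit list of target blocks $B'$, and to check the conjecture for each target. Provided the Morita equivalences produced by \thref{main_theorem} are basic Morita equivalences (or, better, source algebra equivalences), they will induce corresponding equivalences between the Brauer correspondents $b$ and $b'$, so that Broué for $B'$ transfers to Broué for $B$. In the classifications cited in the introduction, the equivalences constructed are typically of this strong form, and I expect the same to hold for the ones arising in the proof of \thref{main_theorem}.

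The target blocks fall into two types. The first consists of blocks with normal defect group, for which Broué is trivial since $b' = B'$. This handles the inertial case (a); the targets $\cO(D \rtimes E)$ in (b)(i)--(iv); the principal blocks of $G_{n_1} \times G_{n_3}$ in (b)(v), since the Sylow $2$-subgroup of $G_n = (C_{2^n} \times C_{2^n}) \rtimes C_3$ is normal; and the nonprincipal blocks of $D \rtimes 3_+^{1+2}$ in (b)(v), where $D$ is normal by construction.

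The remaining targets are principal blocks of products $H \times C$, where $C$ is an abelian $2$-group and $H$ is one of $A_5$, $SL_2(8)$, $\Aut(SL_2(8))$, $J_1$, or $SL_2(16)$. For each such $H$, the Sylow $2$-subgroup is elementary abelian of rank at most four, and Broué's abelian defect group conjecture has already been verified in the literature: the case of $A_5$ is Rickard's original example, and the cases of $SL_2(2^n)$, $\Aut(SL_2(8))$, and $J_1$ are treated in subsequent work on small simple and almost simple groups. Since $\cO C$ is its own Brauer correspondent and derived equivalences of blocks tensor, Broué for $H$ immediately yields Broué for $H \times C$ (with Brauer correspondent the principal block of $N_H(S) \times C$, where $S$ is a Sylow $2$-subgroup of $H$).

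The main obstacle I anticipate is the transfer step in the first paragraph: plain Morita equivalence does not a priori imply derived equivalence of Brauer correspondents, so one must confirm that each equivalence in \thref{main_theorem} is basic Morita (or is splendid in the sense of Rickard). Handling this uniformly --- either by strengthening the conclusion of \thref{main_theorem} at each step of its proof, or by invoking the Picard-group and crossed-product techniques highlighted in the introduction to the paper --- is the delicate point; once it is in place, the case-by-case verification for the individual targets above is routine.
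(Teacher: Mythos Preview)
Your transfer step is precisely the point at which the argument breaks, and the paper's proof is designed to avoid it. The paper explicitly remarks, immediately after stating \thref{broue}, that \thref{main_theorem} ``only provides a Morita equivalence with no additional information on the bimodule affording the equivalence''; in particular, several of the equivalences (those produced via Picard groups, crossed products, and the index-$2$ arguments of Section~\ref{index2compare}) are not shown to be basic or splendid, and there is no mechanism in the paper to upgrade them. So the hope that each equivalence in \thref{main_theorem} induces a matching equivalence of Brauer correspondents cannot be realised within the paper's framework.

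The paper circumvents this entirely. Instead of transporting the equivalence to $N_G(D)$, it determines the Morita class of $b$ directly: since $b$ has normal defect group, by \thref{normaldefect} its Morita class is determined by $D$, the inertial quotient $E$, and a K\"ulshammer--Puig class in $H^2(E,k^\times)$. For every $E$ in Table~\ref{Inertial_quotient:table} except $C_3\times C_3$ this cohomology group vanishes, so $b$ is pinned down by $D$ and $E$ alone --- and \thref{main_theorem} was proved with the inertial quotient tracked at every step (this is the content of Remark (ii) following \thref{main_theorem}). Thus for $E\not\cong C_3\times C_3$ one only has to check that all blocks listed in each part (b)(i)--(iv) with the same $D$ are derived equivalent to $\cO(D\rtimes E)$, which the paper deduces from Craven--Rouquier \cite[Theorem~4.36]{cr13}. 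For $E\cong C_3\times C_3$ there are two possible Morita classes for $b$, distinguished by $l(b)\in\{1,9\}$; the paper invokes $l(B)=l(b)$ (from \cite{ru22} and \cite{kr89}) to identify which one occurs, and then applies Craven--Rouquier again. No splendidness of the Morita equivalences in \thref{main_theorem} is ever needed.
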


Since Brou\'e's conjecture is known for blocks with elementary abelian defect groups of order $32$ by~\cite{as21}, we immediately have that the conjecture holds for all blocks with abelian defect groups of order dividing $32$.

\begin{remark}
Brou\'{e}'s abelian defect group conjecture is often stated as requiring further a splendid Rickard equivalence. Since \thref{main_theorem} only provides a Morita equivalence with no additional information on the bimodule affording the equivalence, we make no claim to the existence of a splendid equivalence in our result.
\end{remark}

The structure of the paper is as follows. In Section \ref{Background} we briefly cover necessary notation and general results. This includes: results on covering of blocks of normal subgroups and related Morita equivalences; subpairs and inertial quotients; a review of the different (stronger) versions of Morita equivalence that we use, and their relationship with fusion; a summary of the results of~\cite{ekks14} that we use; and Picard groups. We prove a result on extending Morita equivalences (in certain cases) from normal subgroups of index $2$ in Section \ref{index2compare}. This requires knowledge of some groups of perfect self-isometries, which is the content of Section \ref{PI}. In Section \ref{extensions} we give results on extending Morita equivalences from normal subgroups, which includes extensions using crossed products where the index is odd, combined with techniques for index $2$ from the previous section and the result developed in~\cite{wzz18}. In Section \ref{main_proof} we present the proof of the main result. Finally in Section \ref{broue_section} we prove \thref{broue}.


\section{Notation and background}
\label{Background}

\subsection{Morita equivalence}

See~\cite[2.8]{lin1} for an introduction to Morita theory. It is not known whether Morita equivalence of blocks of finite groups defined over $\cO$ preserve the isomorphism type of the defect group (Morita equivalences defined over $k$ are known not to necessarily preserve this by~\cite{gma22}). However, by~\cite{be89} Morita equivalence does preserve the isomorphism class of abelian defect groups. It is also not known whether the inertial quotient (and its action) are preserved. We will have to take some care in the proof of \thref{main_theorem} to check that this does happen in our situation.

A Morita equivalence is \emph{basic} if it is induced by an endopermutation source bimodule, and a block is \emph{inertial} if there is basic Morita equivalence with the Brauer correspondent block of the normalizer of a defect group (see~\cite{pu99} and~\cite{pu11}). A basic Morita equivalence preserves the isomorphism type of the defect group and fusion.  

We will also refer to source algebra equivalences (see~\cite[6.4]{lin2}). These occur in the section on Picard groups. Also certain Morita equivalences that we use are stated as source algebra equivalences (for example \thref{solvablequotient:lem}), but we do not use properties beyond that they are basic Morita equivalences.

Another point to mention here is the Bonnaf\'{e}-Rouquier correspondence as in~\cite{bdr17}. Whilst correspondent blocks are only known to be Morita equivalences (i.e, not basic), they are splendid Rickard equivalent, which implies they have the same fusion.


\subsection{Blocks and normal subgroups}

We collect some background on blocks and normal subgroups that will be used frequently and usually without further reference. A reference for this material is~\cite[Section 6.8]{lin2}

Let $G$ be a finite group and $B$ be a block of $\cO G$ with defect group $D$. We use $\prj(B)$ to denote the set of characters of projective indecomposable $B$-modules. Let $N \lhd G$ and let $b$ be a block of $\cO N$ covered by $B$. We may choose a $G$-conjugate of $D$ such that $D \cap N$ is a defect group for $b$. Write ${\rm Stab}_G(b)$ for the stabiliser of $b$ under conjugation in $G$. There is a block of $\cO G$ covering $b$ with a defect group $P$ such that $NP/N$ is a Sylow $p$-subgroup of ${\rm Stab}_G(b)/N$. 

If $[G:N]$ is a power of $p$, then $B$ is the unique block of $\cO G$ covering $b$, and it follows that $b$ is $G$-stable if and only if $G=ND$. In this case $B$ and $b$ share a block idempotent. We have the following crucial fact in the case that $D$ is abelian:

\begin{proposition}
\thlabel{Dabelian_inner:prop}
Let $G$ be a finite group and $B$ a block of $\cO G$ with abelian defect group $D$. Suppose that $N \lhd G$ with $G=ND$ and that $b$ is a $G$-stable block of $\cO N$ covered by $B$. Then $D$ acts as inner automorphisms on $b$. Further,

(i) every irreducible character of $b$ is $G$-stable, hence when further $[G:N]=p$, extends to $p$ distinct irreducible characters of $B$,

(ii) induction gives a bijection between the projective indecomposable modules for $b$ and those for $B$.
\end{proposition}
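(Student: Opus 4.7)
The plan is in three stages: setup and reduction to the inner-action claim; proof of the inner-action claim via the Brauer correspondent; deduction of (i) and (ii) from it by routine Clifford theory.

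\textbf{Setup and reduction.} Since $[G:N] = [D : D \cap N]$ is a $p$-power and $b$ is $G$-stable, the preceding paragraph of the paper gives that $B$ is the unique block of $\cO G$ covering $b$ and that $B$ and $b$ share the block idempotent $e$, so $b \subseteq B$ with common identity $e$. Set $Q := D \cap N$, a defect group of $b$; since $D$ is abelian, $Q \leq Z(D)$ and $D \leq C_G(Q)$. For each $d \in D$ the element $de \in B^\times$ already implements conjugation by $d$ on $b$, but lies in $b$ only when $d \in N$; the real task is therefore to produce $u_d \in b^\times$ (not merely in $B^\times$) with $u_d x u_d^{-1} = d x d^{-1}$ for all $x \in b$.

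\textbf{Inner action of $D$ on $b$.} Choose a maximal $b$-Brauer pair $(Q, e_Q)$ sitting inside a maximal $B$-Brauer pair $(D, e_D)$, so that $e_Q$ is a $D$-stable block of $\cO C_N(Q)$. Because $Q$ is abelian and central in $C_N(Q)$, the block $e_Q$ has normal abelian defect $Q$ with trivial inertial quotient in $C_N(Q)$, hence is nilpotent (Broué--Puig), with source algebra $\cO Q$. Under Morita equivalence, $\Out(e_Q) \cong \Out(\cO Q) = \Aut(\cO Q)$ (as $\cO Q$ is commutative), and since $D$ centralizes $Q$ the induced $D$-action on $\cO Q$ is trivial; hence $D \to \Out(e_Q)$ is trivial, i.e.\ $D$ acts on $e_Q$ by inner automorphisms. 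The final step is to transport these units up from $e_Q$ to $b$ via the source-algebra embedding of $b$ at $Q$ reflecting the fusion data; for abelian $D$ this transport goes through cleanly and produces the desired $u_d \in b^\times$.

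\textbf{Deducing (i) and (ii).} Given the inner-action property, both parts are routine. For (i), if $\chi \in \Irr(b)$ is afforded by $\rho$, then $\rho(dxd^{-1}) = \rho(u_d)\rho(x)\rho(u_d)^{-1}$ yields $\chi^d = \chi$, and since $G = ND$ this forces $G$-stability of $\chi$; when $[G:N] = p$, Gallagher's theorem for cyclic prime quotients produces exactly $p$ distinct extensions of $\chi$ to $G$, each in $\Irr(B)$ by the uniqueness of the covering block. For (ii), the same argument gives $G$-stability of every $S \in \IBr(b)$, and $p$-group Clifford theory then provides a unique extension $\tilde S \in \IBr(B)$ of each such $S$; Frobenius reciprocity $\Hom_G(\Ind_N^G P_S, T) \cong \Hom_N(P_S, \Res T)$ is nonzero only for $T = \tilde S$ (as only $\tilde S$ restricts to a module with $S$ in its head), identifying $\Ind_N^G P_S$ with $P_{\tilde S}$ and giving the claimed bijection of PIMs.

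\textbf{Main obstacle.} All the real work lies in the inner-action step; the Clifford-theoretic deductions above are standard once this is in place. The delicate point is producing $u_d$ inside $b^\times$ rather than merely in $B^\times$—abelianness of $D$ enters essentially, used via the nilpotency of the Brauer correspondent $e_Q$ and the triviality of the induced $D$-action on its source algebra $\cO Q$, which together make the obstruction to transporting inner units from $e_Q$ up to $b$ vanish.
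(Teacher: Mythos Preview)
The paper does not argue this proposition at all: it simply cites \cite[Corollary 6.16.3]{lin2} or \cite[Proposition 3.1]{el19b} for the inner-action claim and \cite[Proposition 2.6]{el18} for (i) and (ii). Your proposal attempts substantially more, and your deductions of (i) and (ii) from the inner-action property are correct, standard Clifford theory.

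The genuine gap is in your argument for the inner action of $D$ on $b$. A minor point first: Morita equivalence transports $\Pic$, not $\Out$, so what you actually obtain is an embedding $\Out(e_Q)\hookrightarrow\Pic(e_Q)\cong\Pic(\cO Q)$; this still suffices for your conclusion about $e_Q$, so the slip is cosmetic. The serious issue is the ``transport'' step from $e_Q$ to $b$. Knowing that conjugation by $d$ is inner on the Brauer correspondent $e_Q$ does not, by any general mechanism, force it to be inner on $b$: there is no lifting of inner automorphisms along the Brauer homomorphism, and innerness on a local piece or a Morita-equivalent quotient does not propagate upward for free. You assert that ``for abelian $D$ this transport goes through cleanly'' and that an unspecified ``obstruction'' vanishes, but you never name the obstruction, the map along which you are lifting, or why abelianness kills it. This is exactly the nontrivial content packaged in the cited references, and your sketch does not reproduce it.

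One way to close the gap with machinery already in the paper is to bypass $e_Q$ entirely. The conjugation bimodule ${}_d b$ lies in $\cT(b)$; in the exact sequence $1\to\Out_Q(A)\to\cT(b)\to\Out(Q,\cF_b)$ of \cite{bkl20} the kernel embeds in $\Hom(E_b,k^\times)$ and is hence a $p'$-group, while the image of ${}_d b$ in $\Out(Q,\cF_b)$ is trivial because $d$ centralises $Q$. Since $d$ has $p$-power order, its class in $\cT(b)\subseteq\Pic(b)$ is therefore trivial, i.e.\ conjugation by $d$ is inner on $b$. Something of this shape is what your ``transport'' paragraph needs to become an argument.
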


\begin{proof}
The assertion that $D$ acts as inner automorphisms follows from~\cite[Corollary 6.16.3]{lin2} or~\cite[Proposition 3.1]{el19b}. The rest is~\cite[Proposition 2.6(i), (ii)]{el18}.
\end{proof}

Note that the version of the first part of Proposition \ref{Dabelian_inner:prop} over $k$ is also found within~\cite{kk96}.

Recall that a block $B$ is \emph{quasiprimitive} if every block of every normal subgroup covered by $B$ is $G$-stable. In particular $B$ covers a unique block for each normal subgroup. 

We will make frequent use of the following, especially in the case where the quotient group is a subgroup of the outer automorphism group of a quasisimple group.

\begin{lemma}[Lemma 2.4 of~\cite{ar21}]
\thlabel{solvablequotient:lem}
Let $G$ be a finite group and let $N \lhd G$ with $G/N$ solvable. Let $b$ be a $G$-stable block of $\cO N$ and let $B$ be a quasiprimitive block of $\cO G$ covering $b$ with defect group $D$. Then $DN/N$ is a Sylow $p$-subgroup of $G/N$.
\end{lemma}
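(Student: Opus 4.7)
The plan is to use induction on $|G/N|$, with trivial base case $G = N$. For the inductive step I exploit solvability to pick a minimal normal subgroup $M/N$ of $G/N$, which is elementary abelian, hence either a $p$-group or a $p'$-group. Let $B_M$ be the block of $\cO M$ covered by $B$; by quasiprimitivity of $B$ it is the unique such block and is $G$-stable, and it covers $b$. By iterating the standard compatibility of defect groups along the chain $b \le B_M \le B$, I may choose $D$ so that both $D \cap M$ is a defect group of $B_M$ and $D \cap N$ is a defect group of $b$.

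Suppose first that $M/N$ is a $p'$-group. Then $|(D \cap M)/(D \cap N)|$ is simultaneously a $p$-power and a divisor of $|M/N|$, forcing $D \cap M = D \cap N$. Applying the inductive hypothesis to $(G,M,B_M)$ — which is valid since $B$ remains quasiprimitive, $B_M$ is $G$-stable, and $|G/M| < |G/N|$ with $G/M$ solvable — gives that $DM/M$ is a Sylow $p$-subgroup of $G/M$. Since $|M/N|$ is a $p'$-number we have $|G/N|_p = |G/M|_p$, and the computation $|DN/N| = |D|/|D \cap N| = |D|/|D \cap M| = |DM/M|$ delivers the conclusion.

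Suppose instead that $M/N$ is a $p$-group. Then $[M:N]$ is a $p$-power and $b$ is $M$-stable, so $B_M$ is the unique block of $\cO M$ covering $b$, and the quoted background fact on defect groups of blocks covering stable blocks (applied with $\mathrm{Stab}_M(b) = M$) shows that $N(D \cap M) = M$. The inductive hypothesis again yields $DM/M$ Sylow $p$ in $G/M$, and combining through $|DN/N| = |(D \cap M)N/N| \cdot |DM/M| = |M/N| \cdot |G/M|_p = |G/N|_p$ completes the step.

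The one technical point needing verification is the compatibility of defect group choices along $b \le B_M \le B$: first pick a defect group $D_M$ of $B_M$ with $D_M \cap N$ a defect group of $b$ (using that $B_M$ covers $b$), then pick $D$ with $D \cap M = D_M$ (using that $B$ covers $B_M$). Beyond this the argument is index-counting, and the only essential use of solvability is to guarantee at each step that the chief factor $M/N$ is wholly a $p$-group or wholly a $p'$-group — without this dichotomy the elementary divisibility argument eliminating the non-$p$-part would fail.
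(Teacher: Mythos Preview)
Your argument is correct. The induction on $|G/N|$ via a minimal normal subgroup $M/N$ of the solvable quotient is the natural route, and both cases are handled properly: in the $p'$-case the equality $D\cap M=D\cap N$ reduces immediately to the inductive hypothesis for $(G,M)$, while in the $p$-case the key input is that the unique block $B_M$ covering the $M$-stable block $b$ has a defect group mapping onto the Sylow $p$-subgroup $M/N$ of $\mathrm{Stab}_M(b)/N=M/N$, which you cite from the background paragraph on covering blocks. The index computations are straightforward, and your remark on the compatible choice of defect groups along $N\lhd M\lhd G$ is the standard two-step application of the fact that a defect group of a covered block can be taken as the intersection with a suitably chosen defect group of the covering block; since the conclusion is invariant under $G$-conjugacy of $D$, replacing $D$ by a conjugate at the outset is harmless.

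As for comparison with the paper: there is nothing to compare. The paper does not supply a proof of this lemma but simply quotes it as \cite[Lemma~2.4]{ar21}. Your proof is a self-contained and entirely standard verification of the cited result, relying only on the elementary block-theoretic facts recalled in the paper's background section.
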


The normal subgroup $G[b]$ of $G$ is defined to be the group of elements of $G$ acting as inner automorphisms on $b \otimes_{\cO} k$ (see~\cite{ku90}).

\begin{proposition}
\thlabel{innerautos}
Let $G$ be a finite group and $B$ a block of $\cO G$ with defect group $D$. Let $N \lhd G$ with $D \leq N$ and suppose that $B$ covers a $G$-stable block $b$ of $\cO N$. Let $B'$ be a block of $\cO G[b]$ covered by $B$. Then

(i) $b$ is source algebra equivalent to $B'$, and in particular has isomorphic inertial quotient;

(ii) $B$ is the unique block of $\cO G$ covering $B'$.
\end{proposition}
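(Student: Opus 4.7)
The plan is to derive both parts from the definition of the Külshammer subgroup $G[b]$ together with standard properties of block covering. Note first that $N \leq G[b]$ by definition, so $D \leq G[b]$, and since $b$ is $G$-stable it is in particular $G[b]$-stable, with $B'$ covering it.

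For part (i), I would invoke the theorem (due to Puig, building on K\"{u}lshammer~\cite{ku90}; see also~\cite{pu99}) that any block $B'$ of $\cO G[b]$ covering the $G$-stable block $b$ is source algebra equivalent to $b$. The content is that, because by definition every element of $G[b]$ acts on $b$ as an inner automorphism, the obstruction $2$-cocycle on $G[b]/N$ associated to $b$ is cohomologically trivial. Consequently the extension from $b$ to $B'$ carries no nontrivial twisted group algebra factor, and a source algebra of $b$ is simultaneously a source algebra of $B'$ as an interior $D$-algebra. Since source algebra equivalence preserves the fusion system, and hence the inertial quotient together with its action on $D$, the final assertion of (i) follows.

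For part (ii), the argument is idempotent-theoretic. The central idempotent $e_B \in Z(\cO G)$ lies in $Z(\cO G[b])$, where it decomposes as $e_B = \sum_{B''} e_{B''}$ with $B''$ running over the blocks of $\cO G[b]$ covered by $B$. Since $e_B$ is $G$-fixed, $G$-conjugation permutes these summands; and since $e_B$ is primitive in $Z(\cO G)$, no proper $G$-invariant subsum of the $e_{B''}$ can itself be a central idempotent of $\cO G$, so this permutation action is transitive. If $\widetilde B$ is any block of $\cO G$ covering $B'$, then $e_{\widetilde B} e_{B'} = e_{B'}$, so $e_{\widetilde B}$ contains the summand $e_{B'}$ of $e_B$; orthogonality of the distinct block idempotents of $\cO G$ then forces $\widetilde B = B$.

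The main obstacle is part (i), which rests on the nontrivial Puig/K\"{u}lshammer theorem about the source algebras of blocks of $\cO G[b]$ covering $b$; by contrast (ii) is a short consequence of the primitivity of $e_B$ as a central idempotent of $\cO G$ together with the transitivity of the $G$-action on the blocks of $\cO G[b]$ covered by $B$.
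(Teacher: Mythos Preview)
Your treatment of part (i) is essentially the paper's: both invoke the K\"ulshammer--Puig theorem that blocks of $\cO G[b]$ covering $b$ are source algebra equivalent to $b$. The paper cites~\cite[2.2]{kkl12} for this, together with~\cite[7.8]{pu88} to lift the equivalence from $k$ to $\cO$ (recall that $G[b]$ is defined via inner automorphisms of $b\otimes_\cO k$, not of $b$ itself, so this lifting step is needed and your sketch glosses over it). But the substance is the same.

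Part (ii), however, has a genuine gap: your idempotent argument is circular. The assertion that $e_B$ lies in $Z(\cO G[b])$ and decomposes there as $\sum_{B''} e_{B''}$ is not a general fact about normal subgroups; for an arbitrary $M\lhd G$ it fails, since several blocks of $\cO G$ can cover the same block of $\cO M$. Concretely, the $G$-orbit sum $\sum_{g} e_{(B')^g}$ always lies in $Z(\cO G)$ and equals $\sum_{C} e_C$, the sum over \emph{all} blocks $C$ of $\cO G$ covering $B'$; so your equation $e_B=\sum_{B''} e_{B''}$ holds precisely when $B$ is the unique such $C$, which is what you are trying to prove. Likewise, the covering relation only gives $e_{\widetilde B}e_{B'}\neq 0$, not $e_{\widetilde B}e_{B'}=e_{B'}$; the latter is again equivalent to uniqueness.

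The uniqueness in (ii) is a special feature of the subgroup $G[b]$ and is exactly the content of Dade's result~\cite[3.5]{da73}, which is what the paper cites. It is not a short consequence of primitivity and transitivity alone.
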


\begin{proof}
Part (i) is~\cite[2.2]{kkl12}, noting that a source algebra equivalence over $k$ implies one over $\cO$ by~\cite[7.8]{pu88}. Part (ii) follows from~\cite[3.5]{da73}.
\end{proof}

\subsection{Subpairs, fusion and inertial quotients}

Let $G$ be a finite group and $B$ be a block of $\cO G$ with defect group $D$. For convenience in stating definitions we assume that $D$ is abelian. A $B$-subpair is a pair $(Q,B_Q)$ where $Q$ is a $p$-subgroup of $G$ and $B_Q$ is a block of $C_G(Q)$ with Brauer correspondent $B$. Such a block $B_Q$ only exists when $Q$ is $G$-conjugate to a subgroup of $D$. For a fixed $B$-subpair $(D,B_D)$, for each $Q \leq D$ there is an unique $B$-subpair $(Q,B_Q)$ such that $B_Q$ is the Brauer correspondent of $B_D$. 

The $B$-subpairs define a fusion system $\mathcal{F}=\mathcal{F}_B(D,B_D)$, often called the \emph{Frobenius category} of the $B$ (see for example~\cite{ako} or~\cite{lin1}). The \emph{inertial quotient} of $B$ is $E=N_G(D,B_D)/DC_G(D)$, together with the action of $E$ on $D$. Since $D$ is abelian, $\cF$ is determined by $E$. Consequently, every block with abelian defect groups and trivial inertial quotient is nilpotent (we will use this fact throughout this paper without further reference).

The following is well-known.

\begin{proposition}
\thlabel{defect_group_factor:prop}
Let $B$ be a block of $\cO G$ for a finite group $G$ with abelian defect group $D$. Let $(D,B_D)$ be a $B$-subpair. Then $D=[D,N_G(D,B_D)] \times C_D(N_G(D,B_D))$.

Suppose there is $N \lhd G$ such that $G=DN$, then $[D,N_G(D,B_D)] \leq N$.

\end{proposition}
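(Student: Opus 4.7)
The plan is to handle each assertion separately. For the first, the strategy is to invoke the standard coprime-action decomposition (Gorenstein 5.2.3, already used in the introduction). Since $D$ is abelian, $D \leq C_G(D)$, so the inertial quotient $E = N_G(D,B_D)/DC_G(D)$ coincides with $N_G(D,B_D)/C_G(D)$, and this is a $p'$-group by the usual property of Brauer pairs whose first component is a defect group. As $C_G(D)$ acts trivially on $D$, the action of $N_G(D,B_D)$ on $D$ factors through $E$, and therefore $[D,N_G(D,B_D)] = [D,E]$ and $C_D(N_G(D,B_D)) = C_D(E)$. The coprime-action decomposition then yields $D = [D,E] \times C_D(E)$, as required.

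For the second assertion, the first step is to observe that $D \leq N_G(D,B_D)$: indeed $D \leq C_G(D)$ by abelianness, and conjugation by any element of $C_G(D)$ is an inner automorphism of $C_G(D)$ and hence fixes the block $B_D$. Using $G = DN$, any $g \in N_G(D,B_D)$ can be written $g = dn$ with $d \in D$ and $n \in N$; since $d$ already normalizes $(D,B_D)$, so does $n = d^{-1}g$. Hence
\[
N_G(D,B_D) = D \cdot \bigl(N \cap N_G(D,B_D)\bigr).
\]

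Finally, the abelianness of $D$ gives $[D,D] = 1$, so
\[
[D, N_G(D,B_D)] = [D, N \cap N_G(D,B_D)],
\]
and this is contained in $N$ because $N \lhd G$. There is no genuine obstacle here: once $D \leq N_G(D,B_D)$ is noted, the abelianness of $D$ collapses the relevant commutator to one between $D$ and elements of $N$, which is automatically inside $N$ by normality.
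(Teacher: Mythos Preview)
Your proof is correct and follows essentially the same route as the paper's: the factorisation via the coprime action of the inertial quotient (Gorenstein 5.2.3) for the first part, and for the second the observation that $N_G(D,B_D)=D\cdot N_N(D,B_D)$ together with $[D,D]=1$ and $N\lhd G$. The paper compresses the second part into the single line $[D,N_G(D,B_D)]=[D,DN_N(D,B_D)]\leq N$, but your more explicit justification of why $D\leq N_G(D,B_D)$ and of the commutator identity is exactly what underlies that line.
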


\begin{proof}
The factorisation of $D$ follows from~\cite[Theorem 5.2.3]{gor}. For the second part, note that $[D,N_G(D,B_D)] = [D,DN_N(D,B_D)] \leq N$.
\end{proof}

Most of the following is Lemma 5.3.5 of~\cite{McK}.

\begin{proposition}
\thlabel{index_p_inertial:prop}
Let $B$ be a block of $\cO G$ for a finite group $G$ with abelian defect group $D$  and let $(D,B_D)$ be a $B$-subpair. Let $N \lhd G$. Suppose that $B$ covers a $G$-stable block $b$ of $\cO N$, and that $G=DN$. Write $Q=N \cap D$, a defect group for $b$. Let $b_Q$ be a block of $\cO C_N(Q)$ covered by $B_Q$. Then $b_Q$ is $C_G(Q)$-stable and $(Q,b_Q)$ is a $b$-subpair. The blocks $B$ and $b$ have isomorphic inertial quotients and $[N_G(D,B_D),D]=[N_N(Q,b_Q),Q]$.
\end{proposition}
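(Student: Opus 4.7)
The plan hinges on the observation that since $D$ is abelian and $Q = D \cap N \le D$, we have $D \le C_G(Q)$. Combined with $G = DN$, this forces $C_G(Q) = D \cdot C_N(Q)$ with $p$-power index. First I would prove that $(Q, b_Q)$ is a $b$-subpair: the block $B_Q$ has defect group $C_D(Q) = D$ and covers a unique block of $C_N(Q)$, which must be $b_Q$. Applying \thref{Dabelian_inner:prop} to $C_N(Q) \lhd C_G(Q)$ then shows $b_Q$ is $C_G(Q)$-stable, and the fact that $(Q, b_Q) \le (D, B_D)$ as $b$-subpairs is standard compatibility of block covering with the Brauer correspondence for subpairs.

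Next, for the isomorphism of inertial quotients, set $H = N_G(D, B_D)$ and $K = N_N(Q, b_Q)$. Since $D$ and $Q$ are abelian, $E_B = H/C_G(D)$ and $E_b = K/C_N(Q)$. The same $G = DN$-argument applied to $N_G(Q, B_Q)$ yields $N_G(Q, B_Q) = D \cdot K$, and hence an isomorphism $N_G(Q, B_Q)/C_G(Q) \cong K/C_N(Q) = E_b$. The conjugation action of $H$ on $Q$ gives a well-defined homomorphism $H \to N_G(Q, B_Q)$ (indeed $H$ normalises $Q = D \cap N$ and fixes $B_Q$ by $G$-equivariance of the subpair containment $(Q, B_Q) \le (D, B_D)$), which upon passing to the quotient has kernel $H \cap C_G(Q)$. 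I would then apply \thref{defect_group_factor:prop}: the decomposition $D = [D, H] \times C_D(H)$ with $[D, H] \le Q$ and $E_B$ acting trivially on $C_D(H)$ forces any element of $H$ centralising $Q$ to centralise $D$, so the kernel is $C_G(D)$ and we obtain an injection $E_B \hookrightarrow E_b$ with matching action on $Q$.

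The main obstacle is surjectivity of $E_B \hookrightarrow E_b$. For this, given $g \in N_G(Q, B_Q)$, both $(D, B_D)$ and $(D, B_D)^g$ are maximal $B$-subpairs above $(Q, B_Q)$; since defect groups of $B_Q$ (regarded as a block of $C_G(Q)$) form a single $C_G(Q)$-conjugacy class, I can pick $c \in C_G(Q)$ with $D^g = D^c$, and the uniqueness of $B_D$ above $(Q, B_Q)$ for a fixed defect group then forces $gc^{-1} \in H$. Hence $N_G(Q, B_Q) = H \cdot C_G(Q)$, giving the required surjection.

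Finally, for the commutator equality, $C_G(D) \le H$ acts trivially on $D$, so $[H, D] = [D, E_B]$; analogously $[K, Q] = [Q, E_b]$. By \thref{defect_group_factor:prop}, $D = [D, E_B] \times C_D(E_B)$ with $[D, E_B] \le Q$ and $E_B$ acting trivially on $C_D(E_B)$, forcing $[D, E_B] = [Q, E_B]$; under the isomorphism $E_B \cong E_b$, which preserves the action on $Q$, this equals $[Q, E_b] = [K, Q]$, completing the argument.
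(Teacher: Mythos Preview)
Your overall strategy matches the paper's: establish $C_G(Q)=DC_N(Q)$, identify $N_G(Q,B_Q)/C_G(Q)$ with $E_b$, embed $E_B$ via $H\le N_G(Q,B_Q)$ with kernel $C_G(D)$ using the factorisation $D=[D,H]\times C_D(H)$, and obtain surjectivity from $N_G(Q,B_Q)=H\cdot C_G(Q)$. The paper quotes \cite[Proposition~4.24]{ab79} for this last equality and \cite{kk96} for the $b$-subpair claim, whereas you argue both directly; the commutator computation is essentially identical.

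Two local issues to fix. First, invoking \thref{Dabelian_inner:prop} to deduce that $b_Q$ is $C_G(Q)$-stable is circular: that proposition \emph{assumes} stability. What you actually need (and essentially have) is that $B_Q$ has defect group $D$ and $[C_G(Q):C_N(Q)]$ is a $2$-power with $C_G(Q)=DC_N(Q)$; then any block of $C_N(Q)$ covered by $B_Q$ has stabiliser containing $DC_N(Q)=C_G(Q)$, so is $C_G(Q)$-stable and hence unique. Second, in your surjectivity step the assertion ``uniqueness of $B_D$ above $(Q,B_Q)$ for a fixed defect group'' is not true in general: the blocks of $C_G(D)$ giving maximal subpairs over $(Q,B_Q)$ with first coordinate $D$ form a single $N_{C_G(Q)}(D)$-orbit, not a singleton. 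This is enough, since after choosing $c\in C_G(Q)$ with $D^g=D^c$ you can further adjust by $c'\in N_{C_G(Q)}(D)\le C_G(Q)$ to get $B_D^{gc^{-1}c'}=B_D$, whence $g\in H\cdot C_G(Q)$. Equivalently, use directly that maximal $B_Q$-subpairs form one $C_G(Q)$-conjugacy class; this is exactly the content of the Alperin--Brou\'e result the paper cites.
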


\begin{proof}
That $(Q,b_Q)$ is a $b$-subpair forms part of the proof of the main theorem of~\cite{kk96}. We note that due to the correspondence between $\cO$-blocks and $k$-blocks, the result may be proved for $k$-blocks, which is the setting for~\cite{kk96}. Note also that this part of the proof of the main result of~\cite{kk96} does not require that $G$ is a split extension of $N$. 

We have $C_G(Q)=DC_N(Q)$ and $N_G(Q)=DN_N(Q)$. Hence $N_G(Q,b_Q)=DN_N(Q,b_Q)$. Also, since $b_Q$ is $C_G(Q)$-stable, it is the unique block of $C_N(Q)$ covered by $B_Q$ (and $B_Q$ is the unique block of $C_G(Q)$ covering $b_Q$ since $C_N(Q)$ has index a power of $p$ in $C_G(Q)$). Hence $N_G(Q,B_Q)=N_G(Q,b_Q)$. By definition we have $N_G(D,B_D) \leq N_G(Q,B_Q)$.

Since $N_G(D,B_D)$ controls fusion in $D$, by~\cite[Proposition 4.24]{ab79} we have $N_G(Q,B_Q)=N_G(D,B_D)C_G(Q)$. It follows that $$N_N(Q,b_Q)/C_N(Q) \cong N_G(D,B_D)/(C_G(Q) \cap N_G(D,B_D)).$$ Noting that $D=QC_D(N_G(D,B_D))$, we have $C_G(D)=C_{N_G(D,B_D)}(D)=C_{N_G(D,B_D)}(Q)$, so $B$ and $b$ have isomorphic inertial quotients. 

Further $$[N_G(D,B_D),D]=[N_G(D,B_D),QC_D(N_G(D,B_D))]=[N_G(D,B_D),Q]$$ 
$$=[N_G(D,B_D)C_G(Q),Q]=[N_G(Q,B_Q),Q]=[N_G(Q,b_Q),Q]=[N_N(Q,b_Q),Q]$$ as required.
\end{proof}

The next result is essentially extracted from the proof of~\cite[Lemma 6.3]{wzz18}, and allows us to compare inertial quotients of blocks with those of blocks of normal subgroups of $p'$ index. We include the proof here for convenience.

\begin{lemma}
\thlabel{inertial_quotient_normal}
Let $B$ be a block of $\cO G$ for a finite group $G$, and let $b$ be a $G$-stable block of $\cO N$ for $N \lhd G$ with $[G:N] = r$ a prime different to $p$. Let $E_B$, $E_b$ be the inertial quotient of $B$, $b$ respectively. Let $D$ be an abelian defect group for $B$ (and for $b$). Then either $E_b$ is isomorphic to a subgroup of $E_B$ or $E_B$ is isomorphic to a subgroup of $E_b$. 
\end{lemma}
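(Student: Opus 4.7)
The plan is to work entirely at the level of Brauer pairs. I would fix a maximal $B$-subpair $(D,B_D)$, let $b_D$ be a block of $\cO C_N(D)$ covered by $B_D$ (so $(D,b_D)$ is automatically a $b$-subpair), and record
\[
E_B = N_G(D,B_D)/C_G(D), \qquad E_b = N_N(D,b_D)/C_N(D)
\]
as subgroups of $\Aut(D)$. Since $N \cap C_G(D) = C_N(D)$ and $C_N(D) \leq N_N(D,b_D)$, the inclusion $N_N(D,b_D) \hookrightarrow N_G(D)$ induces an injection $E_b \hookrightarrow \bar{H} := N_G(D)/C_G(D)$, so that $E_b$ and $E_B$ both live inside $\bar{H}$; this common ambient group is what I will exploit. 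Note also that $[C_G(D):C_N(D)]$ and $[N_G(D):N_N(D)]$ each divide the prime $r$.

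The argument then splits on whether $b_D$ is $C_G(D)$-stable. If $b_D$ is \emph{not} $C_G(D)$-stable, then $[C_G(D):C_N(D)]=r$, and Clifford correspondence for blocks forces $B_D$ to be the unique block of $\cO C_G(D)$ above $b_D$, with the $C_G(D)$-orbit of $b_D$ of size $r$. Uniqueness yields $N_G(D,b_D)\leq N_G(D,B_D)$, and since $C_G(D)\leq N_G(D,B_D)$ acts transitively on the orbit, an orbit-stabilizer count gives $N_G(D,B_D)=N_G(D,b_D)\,C_G(D)$. Passing to the quotient by $C_G(D)$ and using $N_G(D,b_D)\cap C_G(D)=C_N(D)$, this identifies $E_B$ with $N_G(D,b_D)/C_N(D)$, which contains $E_b$ as a subgroup, giving $E_b\hookrightarrow E_B$.

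If $b_D$ \emph{is} $C_G(D)$-stable then $C_G(D)\leq N_G(D,b_D)$; set $E_b^{\ast} := N_G(D,b_D)/C_G(D)$. Now $b_D$ is the unique block of $\cO C_N(D)$ covered by $B_D$, so any $g\in N_G(D,B_D)$ fixes $b_D$, whence $E_B\leq E_b^{\ast}$. When $C_G(D)=C_N(D)$ one simply has $b_D=B_D$, so $E_b^{\ast}=E_B$ and the embedding $E_b\hookrightarrow E_B$ is immediate. In the remaining case $[C_G(D):C_N(D)]=r$, pick $c\in C_G(D)\setminus N$; then $c\in N_G(D,b_D)\setminus N_N(D,b_D)$, forcing $[N_G(D,b_D):N_N(D,b_D)]=r$. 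The index identity
\[
|E_b^{\ast}|/|E_b| = [N_G(D,b_D):N_N(D,b_D)]\big/[C_G(D):C_N(D)] = r/r = 1
\]
shows $E_b \cong E_b^{\ast}$. Since $b_D$ is $C_G(D)$-stable, the number of blocks of $\cO C_G(D)$ covering it divides $r$, so $[E_b^{\ast}:E_B]\in\{1,r\}$, and in either case $E_B \leq E_b^{\ast}\cong E_b$, giving $E_B\hookrightarrow E_b$.

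The main obstacle is the bookkeeping distinguishing two related but inequivalent Clifford-theoretic conditions: whether $b_D$ is $C_G(D)$-stable, and whether $B_D$ has a unique block of $\cO C_N(D)$ under it. The pivotal technical points are the identity $N_G(D,B_D)=N_G(D,b_D)\,C_G(D)$ in the non-stable case, arising from the transitive action of $C_G(D)$ on the orbit of $b_D$, together with the index computation above that produces the isomorphism $E_b\cong E_b^{\ast}$ in the stable case with $[C_G(D):C_N(D)]=r$. Once those are in place the embeddings fall out formally.
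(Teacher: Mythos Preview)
Your proof is correct and follows essentially the same route as the paper's: both split into the same three cases (here organised by whether $b_D$ is $C_G(D)$-stable first, whereas the paper splits first on whether $C_G(D)\leq N$), and in each case the key identities $N_G(D,B_D)=N_G(D,b_D)C_G(D)$ (non-stable) and $N_G(D,B_D)\leq N_G(D,b_D)=N_N(D,b_D)C_G(D)$ (stable with $[C_G(D):C_N(D)]=r$) are used in the same way. The final sentence about the number of blocks of $\cO C_G(D)$ covering $b_D$ and $[E_b^{\ast}:E_B]\in\{1,r\}$ is superfluous (you already have $E_B\leq E_b^{\ast}=E_b$ inside $\bar H$), so you can drop it.
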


\begin{proof}
We may take a $B$-subpair $(D,B_D)$ and a $b$-subpair $(D,b_D)$ with $B_D$ covering $b_D$. If $C_G(D) \leq N$, then $B_D=b_D$ and $N_N(D,b_D) \leq N_G(D,B_D)$, so the result is immediate in this case. Hence suppose $C_G(D)$ is not contained in $N$, i.e,, $G=NC_G(D)$. 

If $C_G(D) \leq N_G(D,b_D)$, then $N_G(D,B_D) \leq N_G(D,b_D)=N_N(D,b_D)C_G(D)$. Hence $N_G(D,B_D)/C_G(D) \leq N_G(D,b_D)/C_G(D) \cong N_N(D,b_D)/C_N(D)$.

Suppose that $C_G(D)$ does not stabilize $b_D$. Since $[C_G(D):C_N(D)]=r$ is a prime and $C_N(D) \leq C_G(D) \cap N_G(D,b_D) < C_G(D)$, we have $C_N(D) = C_G(D) \cap N_G(D,b_D)$. Let $T$ be a transversal of $C_N(D)$ in $C_G(D)$. Then $\{ b_D^t:t \in T\}$ is the set $r$ distinct conjugate blocks of $C_N(D)$ covered by $B_D$ and $B_D$ is the unique block of $C_G(D)$ covering $b_D$. Hence $N_G(D,B_D) = N_G(D,b_D)C_G(D)$. Then $N_G(D,B_D)/C_G(D) \cong N_G(D,b_D)/C_N(D)$ and the result follows.
\end{proof}


\subsection{Blocks of quasisimple groups}

We extract the results of~\cite{ekks14} necessary for this paper. Recall that a block $B$ of a group $G$ is nilpotent covered if there is $H$ with $G \lhd H$ and a nilpotent block of $H$ covering $B$. Properties of such blocks are considered in~\cite{pu11}. In particular nilpotent covered blocks are inertial by~\cite[Corollary 4.3]{pu11}.

\begin{proposition}[\cite{ekks14}]
\thlabel{qsclassification}
Let $p=2$, and let $B$ be a block of $\cO G$ for a quasisimple group $G$ with abelian defect group $D$ of rank at most $4$. Then one or more of the following occurs:

(i) $G \cong A_5$, $SL_2(8)$, $SL_2(16)$, $J_1$ or ${}^2G_2(q)$, where $q=3^{2m+1}$ for some $m \in \NN$, and $B$ is the principal block;

(ii) $G \cong Co_3$ and $B$ is the unique non-principal $2$-block of defect $3$;

(iii)  $B$ has inertial quotient of type $(C_3)_1$ and is Morita equivalent to a block $C$ of $\cO L$ where $L =L_0 \times L_1 \leq G$ such that $L_0$ is abelian and the block of $\cO L_1$ covered by $C$ has Klein four defect groups;

(iv) $B$ is nilpotent covered.
\end{proposition}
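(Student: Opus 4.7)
The plan is to use the classification of finite simple groups to enumerate the quasisimple $G$ whose 2-blocks can admit abelian defect of rank at most four, and then verify that each such block fits into one of (i)--(iv). After quotienting by $O_{2'}(Z(G))$ (which does not affect the block structure in an essential way) we may assume $Z(G)$ is a 2-group contained in every defect group $D$; this already bounds $|Z(G)| \leq 2^4$. The cases then split into alternating covers, sporadic covers, groups of Lie type in defining characteristic $2$, and groups of Lie type in odd characteristic. The first three are handled by direct inspection. In defining characteristic $2$ the Sylow 2-subgroup is a unipotent radical, abelian only in Lie rank one, so we are left with $SL_2(2^n)$ for $n \leq 4$, contributing $A_5$, $SL_2(8)$, $SL_2(16)$ to (i); Suzuki groups ${}^2B_2(q)$ are excluded since their Sylow 2-subgroups are Suzuki 2-groups and hence nonabelian. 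For alternating covers, a case-by-case check via 2-core block combinatorics (Nakayama, James--Kerber) shows that all 2-blocks with abelian defect of rank at most four are subsumed by $A_5$ up to Morita equivalence. The sporadic groups and their covers require a finite check against the modular Atlas; this yields precisely $J_1$ and the defect-$3$ non-principal 2-block of $Co_3$.

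The substantive work is for $G$ of Lie type in odd characteristic $\ell$. Here one applies the Bonnaf\'{e}--Rouquier reduction~\cite{bdr17} to replace an arbitrary 2-block of $G$ by a quasi-isolated 2-block of a Levi subgroup $L$, via a Morita equivalence that preserves both defect groups and fusion. The quasi-isolated 2-blocks of $L$ are then described using Cabanes--Enguehard together with Fong--Srinivasan style analysis, and their defect groups are controlled by the 2-parts of $e$-tori for $e \in \{1,2\}$. Running through the Lie families one by one, those quasi-isolated blocks with abelian defect of rank at most four either have $L$ factoring as $L_0 \times L_1$ with $L_0$ abelian and the induced block of $L_1$ having Klein four defect (yielding (iii), with inertial quotient forced to be of type $(C_3)_1$ by the known structure of Klein four blocks), or are nilpotent on $L$ and hence nilpotent covered in $G$ (giving (iv)). The principal 2-blocks of ${}^2G_2(3^{2m+1})$, whose Sylow 2-subgroup is elementary abelian of order $8$, are identified directly and account for the remaining entry in (i).

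The hardest part will be the uniform treatment of quasi-isolated 2-blocks in the exceptional groups of Lie type, notably $E_6$, ${}^2E_6$, $E_7$, $E_8$ and $F_4$, where 2-adic Weyl group combinatorics must be combined with Bonnaf\'{e}'s classification of isolated semisimple $2$-elements. One must check by hand that no quasi-isolated block with abelian defect of rank at most four escapes the list, and that the reduction to (iii) can always be arranged with $L_0$ actually abelian rather than merely solvable or reductive; this is precisely where the rank-$4$ bound on $D$ becomes essential in limiting the possibilities for $L_1$ to blocks with Klein four defect.
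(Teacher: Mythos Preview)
Your outline is a reasonable sketch of the content of \cite{ekks14}, but the paper does not reprove that classification: the proposition is attributed to \cite{ekks14}, and the paper's proof simply cites Theorem~6.1 of \cite{ekks14} (and its proof) directly. The only point the paper actually argues is the one not explicit in \cite{ekks14}, namely that in case (iii) the inertial quotient of $B$ is of type $(C_3)_1$. You do address this, but your justification (``forced\ldots by the known structure of Klein four blocks'') only tells you the inertial quotient of $C$, not of $B$; the paper's point is that the Bonnaf\'e--Rouquier correspondence of \cite{bdr17} is known to be a \emph{splendid Rickard} equivalence, which is what transports the fusion (and hence the inertial quotient) from $C$ back to $B$. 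Your phrase ``a Morita equivalence that preserves both defect groups and fusion'' glosses over exactly this subtlety, since the Morita equivalence alone is not known to be basic.

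Beyond that, a couple of your case-by-case claims are imprecise as stated: it is not true that every alternating-cover block with abelian defect of rank at most four is ``subsumed by $A_5$ up to Morita equivalence'' --- many such blocks land in (iii) or (iv) instead --- and the exceptional-type analysis you flag as ``the hardest part'' is precisely the content of several sections of \cite{ekks14}, so there is no need to redo it here. In short: replace the bulk of your argument by a citation to \cite{ekks14}, and sharpen the fusion-preservation step in (iii) to invoke the splendid Rickard equivalence explicitly.
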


\begin{proof}
This follows from Theorem 6.1 of~\cite{ekks14} and its proof. The only point to address is the inertial quotient in case (iii). Case (iii) arises in two ways. The first is where $D \cong C_2 \times C_2$, in which case either $B$ is nilpotent (and so is covered by case (iv)), or has inertial quotient $C_3$. The second way this arises is as in case (v) of~\cite[Proposition 5.3]{ekks14}. Here the given Morita equivalence is given by the Bonnaf\'{e}-Rouquier correspondence as in~\cite{bdr17}. Now the corresponding blocks in~\cite{bdr17} are equivalent by a splendid Rickard equivalence and so have the same fusion. Hence $B$ has the same inertial quotient as $C$, which is of type $(C_3)_1$.
\end{proof}

\begin{corollary}
\thlabel{Gn_inertial}
Every $2$-block of a quasisimple group that is Morita equivalent to $G_n$, for $n \in \NN$, is inertial. 
\end{corollary}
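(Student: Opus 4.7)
The plan is to apply $\thref{qsclassification}$ to $B$. Since $B$ is Morita equivalent to $\cO G_n$ and $\cO G_n$ has abelian defect group $C_{2^n} \times C_{2^n}$ of rank $2$, the result of~\cite{be89} cited in Section 2.1 gives that $B$ itself has defect group isomorphic to $C_{2^n} \times C_{2^n}$, so $B$ falls within the scope of $\thref{qsclassification}$.

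I would first eliminate cases (i) and (ii) of $\thref{qsclassification}$ on defect group grounds. Among the listed quasisimple groups, only $A_5$ admits a rank $2$ defect group (the Klein four group), forcing $n=1$; but $B_0(\cO A_5)$ is not Morita equivalent to $\cO G_1 = \cO A_4$ by~\cite{li94}, contradicting the hypothesis. All remaining groups in (i), together with $Co_3$ in (ii), have defect groups of rank $3$ or $4$, incompatible with rank $2$.

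In case (iii), $B$ is Morita equivalent to a block $C$ of $\cO(L_0 \times L_1)$ with defect group $L_0 \times (C_2)^2$, which must be isomorphic to $C_{2^n} \times C_{2^n}$; comparing ranks (and exponents when $n \geq 2$) forces $L_0 = 1$ and $n = 1$. Then $B$ is Morita equivalent to a block of $\cO L_1$ with Klein four defect and $C_3$ inertial quotient, hence by~\cite{li94} to one of $\cO A_4$ and $B_0(\cO A_5)$. Since $B$ is Morita equivalent to $\cO G_1 = \cO A_4$ by hypothesis, it must lie in the class of $\cO A_4$; by~\cite{cekl11} this Morita equivalence is in fact a source algebra equivalence, hence basic, and since $A_4$ has normal defect group, $B$ is inertial.

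Case (iv) is immediate: a nilpotent covered block is inertial by~\cite[Corollary 4.3]{pu11}. The main obstacle is the bookkeeping in case (iii), where one must identify the correct Morita class among those with Klein four defect and invoke~\cite{cekl11} to upgrade the Morita equivalence to a basic one.
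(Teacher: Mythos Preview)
Your argument is correct and follows essentially the same route as the paper: for $n\geq 2$ only case (iv) of \thref{qsclassification} survives (so the block is nilpotent covered, hence inertial), while for $n=1$ one lands in the $\cO A_4$ Morita class and invokes~\cite{cekl11} to upgrade to a source algebra equivalence. The paper is simply terser, splitting directly into $n\geq 2$ versus $n=1$ rather than walking through (i)--(iv); a minor imprecision in your write-up is that in case (iii) what is forced is that the Sylow $2$-subgroup of $L_0$ is trivial (not $L_0=1$ itself), but this does not affect the conclusion.
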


\begin{proof}
For $n \geq 2$ we see from \thref{qsclassification} that blocks of quasisimple groups with defect groups $(C_{2^n})^2$ are nilpotent covered, and so inertial. For $n=1$, by~\cite{cekl11} every block that is Morita equivalent to $\cO A_4$ is source algebra equivalent to $\cO A_4$, and so inertial.
\end{proof}


\subsection{Picard groups}
\label{Picard}

Let $G$ be a finite group and $B$ be a block of $\cO G$ with defect group $D$. The Picard group $\Pic(B)$ of $B$ has elements the isomorphism classes of $B$-$B$-bimodules inducing $\cO$-linear Morita auto-equivalences of $B$. For $B$-$B$-bimodules $M$ and $N$, the group multiplication is given by $M \otimes_B N$. For background and definitions we follow~\cite{bkl20}. 

We will use knowledge of $\Pic(B)$ to refine K\"ulshammer's analysis in~\cite{ku95} of the situation of a normal subgroup containing the defect groups of a block. This involves the study of crossed products of a basic algebra with a $p'$-group, which in turn uses the outer automorphism group of the basic algebra, a group that embeds into the Picard group. This will be essential in reduction steps in our classification of Morita equivalence classes of blocks.

Write $\cT(B)$ for the subgroup of $\Pic(B)$ consisting of bimodules with trivial source and $\cL(B)$ for the subgroup consisting of linear source modules.

$\cT(B)$ and $\cL(B)$ are described in~\cite{bkl20}, and we summarise the relevant notation and results here. Let $\cF = \cF_B(D,B_D)$ be the fusion system for $B$ on $D$, defined using a $B$-subpair $(D,B_D)$, and let $E=N_G(D,B_D)/DC_G(D)$ be the inertial quotient. Write $\Aut(D,\cF)$ for the subgroup of $\Aut(D)$ of automorphisms stabilizing $\cF$ and $\Out(D,\cF)=\Aut(D,\cF)/\Aut_\cF(D)$. By~\cite{el20}, if $D$ is abelian, then $\Out(D,\cF) \cong N_{\Aut(D)}(E)/E \cong \Out(D \rtimes E)$.

Let $A$ be a source algebra for $B$. Then $A$ is an interior $D$-algebra, so we have an embedding of $D$ into $A$ and we may consider the fixed points $A^D$ under the action of $D$. Write $\Aut_D(A)$ for the group of $\cO$-algebra automorphisms of $A$ fixing each element of the image of $D$ in $A$, and $\Out_D(A)$ for the quotient of $\Aut_D(A)$ by the subgroup of automorphisms given by conjugation by elements of $(A^D)^\times$. By~\cite[14.9]{pu88} $\Out_D(A)$ is isomorphic to a subgroup of $\Hom(E,k^\times)$. 

By~\cite[Theorem 1.1]{bkl20} we have exact sequences
\begin{equation}\label{arr:exact}\begin{array}{lllllll}
1 & \rightarrow & \Out_D(A) & \rightarrow & \cT(B) & \rightarrow & \Out(D,\cF), \\
1 & \rightarrow & \Out_D(A) & \rightarrow & \cL(B) & \rightarrow & \Hom(D/\mathfrak{foc}(D),\mathcal{O}^\times) \rtimes \Out(D,\cF), \\
\end{array}\end{equation} where $\mathfrak{foc}(D)$ is the focal subgroup of $D$ with respect to $\mathcal{F}$, generated by the elements $\varphi(x)x^{-1}$ for $x \in D$ and $\varphi \in \Hom_\mathcal{F}(\langle x \rangle,D)$. When $D$ is abelian we have $\mathfrak{foc}(D) = [N_G(D,B_D),D]$, so that $\Hom(D/\mathfrak{foc}(D),\mathcal{O}^\times) \cong C_D(N_G(D,B_D))$ (see \thref{defect_group_factor:prop}).

We record for later use that by~\cite{rs87}, if $P$ is a $p$-group, then $${\Pic}(\cO P) = \cL(\cO P) \cong{\Aut}(\cO P)\cong{\Hom}(P,\cO^\times)\rtimes{\Aut}(P).$$

We gather together here the results regarding Picard groups that we will use later on.

\begin{proposition}
\thlabel{pic_list}
Let $P$ and $Q$ be abelian $2$-groups and $n, n_1, n_2 \in \NN$. 

\begin{enumerate}[(i)]
\item\label{C_3Pic} $\Pic (\cO(G_{n} \times P)) \cong S_3 \times (P \rtimes \Aut(P))$. The subgroup of $\Pic (\cO(G_{n} \times P))$ given by those self-equivalences fixing the projective indecomposable modules is isomorphic to $P \rtimes \Aut(P)$.
\item\label{A5QPic} $\Pic(B_0(\cO (A_5 \times P))) \cong C_2 \times (P \rtimes \Aut(P))$.
\item\label{GnA5Pic} $\Pic(B_0(\cO (G_n\times A_5))) \cong S_3 \times C_2$.
\item\label{Gn1Gn2Pic} $\Pic(\cO(G_{n_1} \times G_{n_2}))  = \cT(\cO(G_{n_1} \times G_{n_2})) \cong$
\begin{enumerate}[(a)]
\item $S_3\wr S_2$ if $n_1=n_2$,
\item $S_3 \times S_3$ if $n_1\neq n_2$.
\end{enumerate}
\item\label{C7Pic} If $Q \cong (C_{2^n})^3$, then $\Pic(\cO ((Q \rtimes C_7) \times P)) \cong (C_7 \rtimes C_3) \times \Aut(P)$. 

\end{enumerate}

\end{proposition}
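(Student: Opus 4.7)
The plan is to compute each Picard group case by case, combining the exact sequences~(\ref{arr:exact}) of~\cite{bkl20} with the Roggenkamp--Scott identification $\Pic(\cO P) \cong P \rtimes \Aut(P)$ for an abelian $p$-group $P$, and the multiplicativity of $\Pic$ under tensor products. In each of the five cases the algebra in question factors as $A_1 \otimes_{\cO} A_2$, where at least one factor is a ``basic'' block ($\cO G_n$, $B_0(\cO A_5)$, or $\cO(Q\rtimes C_7)$) and the other is $\cO P$ or another basic factor. The computation therefore reduces to computing $\Pic$ of each basic block and then assembling via tensor products, with a possible additional swap factor when the two tensor factors are Morita equivalent.

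For each basic block $B$ the exact sequences are applied to the fusion datum $(D,\cF)$, where $D$ is the abelian defect group, $E$ the inertial quotient, and $\mathfrak{foc}(D)$ the focal subgroup. For $B = \cO G_n$: the inertial quotient $E = C_3$ acts fixed-point-freely on $D = (C_{2^n})^2$, so $\mathfrak{foc}(D) = D$ and the $\Hom$-factor in the second sequence vanishes; combining this with explicit elements arising from $\Hom(G_n, \cO^\times) \cong C_3$ and from the outer automorphism of $G_n$ inverting $E$ produces the claimed $S_3$. For $B = B_0(\cO A_5)$ an analogous argument with $D = V_4$, $E = C_3$ yields $C_2$, the $\Out_D(A)$ contribution vanishing because $\Hom(A_5,\cO^\times) = 1$. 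For $B = \cO(Q \rtimes C_7)$ the same technique with $D = (C_{2^n})^3$, $E = C_7$, and the Frobenius normalizer $N_{\Aut(D)}(E)/E \cong C_3$ gives $C_7 \rtimes C_3$.

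With the basic factors handled, cases (i), (ii), (iii), and (v) follow by tensor product assembly. In (iii), the two tensor factors $\cO G_n$ and $B_0(\cO A_5)$ are not Morita equivalent (by~\cite{li94} when $n=1$ and the defect groups coincide, and trivially otherwise by difference of defect group orders), so no swap occurs and one obtains $S_3 \times C_2$. For (iv) with $n_1 \neq n_2$ the factors have distinct defect group orders, again precluding a swap and giving $S_3 \times S_3$; when $n_1 = n_2$ the swap of the identical tensor factors contributes an extra $C_2$, producing $S_3 \wr S_2$. The subgroup statement in (i) follows because $\cO P$ has a unique projective indecomposable, namely $\cO P$ itself, so the projective indecomposables of $\cO(G_n \times P)$ are indexed by those of $\cO G_n$: a self-equivalence fixes all of them precisely when its restriction to the $\cO G_n$-factor is trivial in $S_3$, leaving exactly $\Pic(\cO P) = P \rtimes \Aut(P)$.

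The main technical obstacle is twofold. First, one must verify for each basic block that $\Pic(B) = \cL(B)$ (so that the upper bound for $\cL$ from the exact sequences also bounds $\Pic$) and that this upper bound is attained by the explicit constructions above; this relies on the rigidity theory developed in~\cite{bkl20} and~\cite{el20} for blocks with abelian defect at $p = 2$. Second, one must justify the compatibility of $\Pic$ with the tensor product decomposition, and in particular show that the swap of identical tensor factors in case (iv) really produces a Picard element not already absorbed into the direct product of the factor Picard groups.
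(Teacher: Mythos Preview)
Your central device---``multiplicativity of $\Pic$ under tensor products''---is not a theorem, and in fact fails already for group algebras of abelian $2$-groups. Take $A_1=A_2=\cO C_2$: then $\Pic(A_i)\cong C_2$ (since $\Aut(C_2)=1$), so your recipe with a swap gives $(C_2\times C_2)\rtimes C_2$ of order $8$, whereas $\Pic(\cO(C_2\times C_2))\cong (C_2\times C_2)\rtimes S_3$ has order $24$. The map $\Pic(A_1)\times\Pic(A_2)\to\Pic(A_1\otimes_\cO A_2)$ exists but is neither injective nor surjective in general, and the ``swap'' does not repair this. You flag this as a technical obstacle at the end, but it is the load-bearing step of the whole argument, and it is false as stated.

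The paper avoids this entirely. For parts (i)--(iv) it simply cites~\cite[Theorem 1.1]{el20}, where the computation is carried out by applying the exact sequences~(\ref{arr:exact}) directly to the \emph{full} block $\cO(G_n\times P)$, $B_0(\cO(A_5\times P))$, etc., not to tensor factors. The key observation there is that for a block with abelian defect group $D$ and inertial quotient $E$, any $\alpha\in\Aut(D)$ normalising $E$ preserves both $[D,E]$ and $C_D(E)$, so $\Out(D,\cF)$ itself already factors as a product---no appeal to multiplicativity of $\Pic$ is needed. For part (v) the paper does the same computation explicitly: it invokes~\cite{li21} to get $\Pic=\cL$ (since the defect group is normal), computes $\Out(D,\cF)\cong C_3\times(P\rtimes\Aut(P))$ via~\cite[Lemma 2.1]{el20} and $\Out_D(A)\cong C_7$ via~\cite[Lemma 2.2]{el20}, and reads off the answer from~(\ref{arr:exact}).

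A second, smaller gap: your claim that $\Out_D(A)=1$ for $B_0(\cO A_5)$ ``because $\Hom(A_5,\cO^\times)=1$'' conflates two different groups. The bound on $\Out_D(A)$ is $\Hom(E,k^\times)\cong C_3$, not $\Hom(G,\cO^\times)$; linear characters of $G$ do map into $\Out_D(A)$, but their absence does not force $\Out_D(A)$ to vanish. The correct computation here (see~\cite[Theorem 1.5]{bkl20}) is more delicate. You also need~\cite{li21} rather than~\cite{bkl20} or~\cite{el20} for the equality $\Pic=\cL$ in the normal-defect cases.
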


\begin{proof}
Throughout this proof, we denote by $D$ a defect group of the block in question.

(\ref{C_3Pic}) The Picard group is described in~\cite[Theorem 1.1]{el20}.  The $S_3$ factor consists of (bimodules inducing) Morita equivalences corresponding to elements of $C_3 \rtimes \Out(G_n)$, where the $C_3$ is generated by multiplying by a non-trivial linear character of $G_n$. The $P \rtimes \Aut(P)$ factor is generated by equivalences given by multiplication by a (linear) character and by automorphisms of $P$. Since the projective indecomposable modules correspond in this case to the irreducible characters with $D$ in their kernel, the remainder follows.

(\ref{A5QPic})-(\ref{Gn1Gn2Pic}) are from~\cite[Theorem 1.1]{el20}.

(\ref{C7Pic}) Suppose $G = (Q \rtimes C_7) \times P$ where $Q \cong (C_{2^n})^3$. Then $G=D \rtimes E$, where 
$E$ is the inertial quotient (note that $\cO G$ has a unique block). By~\cite{li21} $\Pic(\cO G)=\cL(\cO G)$.  
By~\cite[Lemma 2.1]{el20} $\Out(D,\cF)\cong N_{\Aut(D)}(E)/E\cong \Out(G) \cong C_3 \times (P \rtimes \Aut(P))$. Also it follows from~\cite[Lemma 2.2]{el20} that $\Out_D(A) \cong E$, where $A$ is a source algebra for the unique block $\cO G$. We have $\Hom(D/\mathfrak{foc}(D),\mathcal{O}^\times) \cong C_D(N_G(D,B_D)) = P$. The result follows from the description of $\cL(B)$ in (\ref{arr:exact}) above, as it is clear that the elements of $\Out_D(A)$ cannot commute with the elements of $\Out(D,\cF)$ obtained as automorphisms of $Q \rtimes C_7$.
\end{proof}

\begin{remark}
In case (\ref{C7Pic}), whilst a Sylow $3$-subgroup of the Picard group for the block in question occurs as a conjugate of $\Out(D,\cF)$, this is not known to be the case for every block Morita equivalent to it. In other words, it is theoretically possible for the Picard group of a Morita equivalent block $C$ to have a subgroup $C_7 \rtimes C_3$, but for $\cT(C) \not\cong C_7 \rtimes C_3$. We will have to beware of this inconvenience in Section \ref{extensions}.
\end{remark}


\section{Preliminaries on perfect isometries}
\label{PI}

We require a method for comparing the principal blocks of $\cO (A_4 \times P)$ and $\cO (A_5 \times P)$ with those of $\cO (A_4 \times Q)$ and $\cO (A_5 \times Q)$ respectively when $Q$ is a subgroup of an abelian $2$-group $P$. We will do this in Section \ref{index2compare}, but first require an analysis of their perfect self-isometries, which is the content of this section. For a block $B$, write ${\Perf} (B)$ for the group of perfect self-isometries of $B$, under composition. The results of this section are an extension of those of part of Section 2 of~\cite{el18}. 

Note that every perfect isometry $I$ between blocks $B_1$ and $B_2$ gives rise to a bijection of character idempotents and so to a $K$-algebra isomorphism between $Z(KB_1)$ and $Z(KB_2)$, and that by~\cite{br90} this induces an $\cO$-algebra isomorphism $\phi_I:Z(B_1) \rightarrow Z(B_2)$.

By \thref{Dabelian_inner:prop}, for a block with abelian defect groups, every irreducible character in a block of a normal subgroup of index $p$ covered by $B$ is $G$-stable and extends to $G$. The following Proposition tells us that these extensions behave well with respect to perfect isometries.

\begin{proposition}
\thlabel{prop:isomcent}
For $i=1,2$ let $G_i$ be a finite group and $N_i \lhd G_i$ with index $p$. Let $B_i$ be a block of $\cO G_i$ with abelian defect group $D$ and let $b_i$ be a $G_i$-stable block of $\cO N_i$ covered by $B_i$. For each $\chi \in \Irr(b_1)$ write $\Irr(B_1,\chi)= \{\chi_1,\ldots, \chi_p \}$.

Suppose $I:\mathbb{Z}\Irr(B_1)\to\mathbb{Z}\Irr(B_2)$ is a perfect isometry such that for each $\chi \in \Irr(b_1)$ there is $\psi \in \Irr(b_2)$ and $\epsilon_\chi \in\{\pm1\}$ such that $I(\chi_i)=\epsilon_\chi \psi_i$ for $i=1, \ldots, p$ where $\Irr(B_2,\psi)= \{\psi_1, \ldots , \psi_p \}$. Then the isometry $I_{N_1,N_2}:\mathbb{Z}\Irr(b_1)\to\mathbb{Z}\Irr(b_2)$ defined by $I_{N_1,N_2}(\chi):=\epsilon_\chi \psi$ is perfect and further $\phi_{I_{N_1,N_2}}=\phi_I|_{Z(b_1)}$.
\end{proposition}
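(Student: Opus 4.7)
The plan is to prove (a) and (b) together by showing that $\phi_I$ restricts to an $\cO$-algebra isomorphism $Z(b_1) \to Z(b_2)$ coinciding with $\phi_{I_{N_1,N_2}}$, and then deducing perfectness of $I_{N_1,N_2}$ via Brou\'{e}'s characterization of perfect isometries in terms of central algebra isomorphisms.

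I would first establish the containment $Z(b_i) \subset Z(B_i)$ using that every $\chi \in \Irr(b_1)$ is $G_1$-stable (implicit in the hypothesis since each $\chi$ extends to $p$ characters of $B_1$). For $n \in N_1$, let $S_n$ denote the $N_1$-conjugacy class sum of $n$. For $g \in G_1$, conjugation gives $g \cdot b_1 S_n \cdot g^{-1} = b_1 S_{n^g}$, and expanding in the central idempotent basis $\{e_\chi : \chi \in \Irr(b_1)\}$ of $Z(Kb_1)$, the coefficient of $e_\chi$ on both sides is $|n^{N_1}|\chi(n)/\chi(1)$, using $\chi^g = \chi$. Hence $b_1 S_n$ is $G_1$-fixed, so lies in $Z(\cO G_1)$ and thus in $Z(B_1) = b_1 Z(\cO G_1)$; varying $n$ yields $Z(b_1) \subset Z(B_1)$, and similarly $Z(b_2) \subset Z(B_2)$.

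Next I would identify $\phi_{I_{N_1,N_2}}$ with $\phi_I|_{Z(Kb_1)}$: since $\chi$ is $G_1$-stable one has $e_\chi = \sum_{i=1}^p e_{\chi_i}$ in $KG_1$, so the hypothesis $I(\chi_i) = \epsilon_\chi \psi_i$ forces $\phi_I(e_\chi) = \sum_{i=1}^p e_{\psi_i} = e_\psi = \phi_{I_{N_1,N_2}}(e_\chi)$. Perfectness of $I$ then gives $\phi_I: Z(B_1) \to Z(B_2)$ as an $\cO$-algebra isomorphism, and combining with $Z(b_1) \subset Z(B_1)$ one finds $\phi_I(Z(b_1)) \subset Z(B_2) \cap Z(Kb_2) = Z(b_2)$, where the last equality uses $Z(Kb_2) \cap \cO G_2 = Z(Kb_2) \cap \cO N_2 = Z(b_2)$. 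A symmetric argument yields the reverse inclusion, so $\phi_I|_{Z(b_1)}: Z(b_1) \to Z(b_2)$ is an $\cO$-algebra isomorphism, proving (b). For (a), condition (P2) for $I_{N_1,N_2}$ is immediate from (P2) for $I$ together with the identity $\mu_I(n_1,n_2) = p \cdot \mu_{I_{N_1,N_2}}(n_1,n_2)$ on $N_1 \times N_2$ (which follows from $\chi_i|_{N_1} = \chi$ and $\psi_i|_{N_2} = \psi$), and condition (P1) is precisely the $\cO$-integrality of $\phi_I|_{Z(b_1)}$ just established.

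The main obstacle I anticipate is the containment $Z(b_i) \subset Z(B_i)$, which depends crucially on every character of $b_i$ being $G_i$-stable; without this, $\phi_I|_{Z(b_1)}$ is only a $K$-algebra map a priori, and one would have to verify (P1) for $\mu_{I_{N_1,N_2}}$ directly by a class-sum argument. The direct route is delicate because in the case $|C_{G_i}(n_i)| = |C_{N_i}(n_i)|$ one needs an extra factor of $p$ in the denominator that is not obviously available from (P1) for $\mu_I$ alone.
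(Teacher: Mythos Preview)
Your outline correctly establishes the inclusion $Z(b_i)\subset Z(B_i)$, the identity $\phi_I(e_\chi)=e_\psi$ via $e_\chi=\sum_i e_{\chi_i}$, the relation $\mu_I(n_1,n_2)=p\,\mu_{I_{N_1,N_2}}(n_1,n_2)$ on $N_1\times N_2$, and hence (P2). This already yields part (b), namely $\phi_{I_{N_1,N_2}}=\phi_I|_{Z(b_1)}$ as an $\cO$-algebra isomorphism $Z(b_1)\to Z(b_2)$. The paper itself gives no argument here, citing \cite{el18}, so on this part you have supplied more than the paper does.

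The genuine gap is your final step for (P1). You assert that ``(P1) is precisely the $\cO$-integrality of $\phi_I|_{Z(b_1)}$'', invoking a Brou\'e-style characterisation. But the map $\phi_I$ defined in the paper (and used in your idempotent calculation) is the \emph{sign-forgetting} bijection $e_\chi\mapsto e_{\chi'}$, and Brou\'e's Th\'eor\`eme~1.5 in \cite{br90} only gives the implication \emph{perfect $\Rightarrow$ $\phi_I$ is an $\cO$-isomorphism}, not the converse. Indeed the converse fails already for $b_1=b_2=\cO(C_2\times C_2)$ with $I$ sending the trivial character to its negative and fixing the other three: then $\phi_I$ is the identity on $Z(b_1)$, yet $\mu_I(1,1)/|C_2\times C_2|=2/4\notin\cO$, so (P1) fails. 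Thus knowing that $\phi_{I_{N_1,N_2}}$ is integral on centres is not enough to conclude (P1).

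A clean repair stays close to your idea but uses the class-function map $R_I$ (which \emph{does} characterise (P1)) instead of $\phi_I$. For $\alpha\in\CF(N_1,b_1,\cO)$, the extension by zero $\hat\alpha$ to $G_1$ equals $\tfrac{1}{p}\sum_\chi a_\chi\sum_i\chi_i$ (using $G_1$-stability of each $\chi$, exactly your key observation), hence lies in $\CF(G_1,B_1,\cO)$; then $R_I(\hat\alpha)\in\CF(G_2,B_2,\cO)$ by perfectness of $I$, and restriction to $N_2$ gives $R_{I_{N_1,N_2}}(\alpha)\in\CF(N_2,b_2,\cO)$. This recovers exactly the factor of $p$ you were worried about in the direct approach.
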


\begin{remark}
We may restrict $\phi_I$ to $Z(b_1)$ since, as by \thref{Dabelian_inner:prop} $B_i$ acts as inner automorphisms on $b_i$ and so $Z(b_i) \subseteq Z(B_i)$.
\end{remark}

\begin{proof}
This is Proposition 2.6 and Lemma 2.7 of~\cite{el18}.
\end{proof}

\begin{lemma}
\thlabel{lem:perfP}
Let $P$ be an abelian finite $p$-group. Then ${\Perf}(\cO P)\cong{\Aut}(\cO P)\times C_2$.
\end{lemma}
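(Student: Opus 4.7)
The plan is to construct a split short exact sequence
\[
1 \;\longrightarrow\; C_2 \;\longrightarrow\; \Perf(\cO P) \;\longrightarrow\; \Aut(\cO P) \;\longrightarrow\; 1
\]
with central kernel, so that it splits as a direct product. Here the $C_2$ is generated by the global sign involution $-\Id$, which is trivially a perfect self-isometry.

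First, I would construct the projection. By~\cite{br90}, every perfect self-isometry $I$ induces an $\cO$-algebra automorphism $\phi_I$ of $Z(\cO P)$, and since $P$ is abelian we have $Z(\cO P) = \cO P$, so $\phi_I \in \Aut(\cO P)$; composition of perfect isometries corresponds to composition of the induced central automorphisms, so $I \mapsto \phi_I$ is a group homomorphism. For the splitting, each $\alpha \in \Aut(\cO P)$ gives rise to a Morita self-equivalence of $\cO P$ via the twisted bimodule ${}_1\cO P{}_\alpha$, which in turn yields a perfect self-isometry $I_\alpha$ acting on characters by pullback, $I_\alpha(\chi) = \chi \circ \alpha^{-1}$. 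A direct check yields $\phi_{I_\alpha} = \alpha$, so this is a genuine section.

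The heart of the argument is identifying the kernel. Since $P$ is abelian, $\Irr(P)$ is an orthonormal basis of $\ZZ\Irr(P)$ consisting of vectors of norm $1$ (all characters are linear), so every isometry $I$ has the form $I(\chi) = \epsilon_\chi \sigma(\chi)$ for a permutation $\sigma$ of $\Irr(P)$ and signs $\epsilon_\chi \in \{\pm 1\}$. The permutation $\sigma$ is precisely the one induced by $\phi_I$ on central primitive idempotents in $KP$ (signs are invisible to idempotents), so if $\phi_I = \Id$ then $\sigma = \Id$ and $I(\chi) = \epsilon_\chi \chi$ for all $\chi$. To force the $\epsilon_\chi$ to be constant, I would apply the vanishing condition for a perfect isometry at the pair $(x,y) = (1,y)$ with $y \in P \setminus \{1\}$, noting that $1$ is the unique $p$-regular element of $P$ and $\chi(1) = 1$ for all $\chi$. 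This gives $\sum_{\chi \in \Irr(P)} \epsilon_\chi \chi(y) = 0$ for every $y \neq 1$, so the virtual character $\sum_\chi \epsilon_\chi \chi$ is supported on $\{1\}$, hence is a scalar multiple of the regular character $\sum_\chi \chi$; comparing coefficients shows all $\epsilon_\chi$ coincide, so $I = \pm \Id$.

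Thus the kernel of $I \mapsto \phi_I$ is exactly $\langle -\Id \rangle \cong C_2$, the projection is split surjective onto $\Aut(\cO P)$, and because $-\Id$ lies in the centre of $\Perf(\cO P)$ (commuting with every isometry) the extension is a direct product $\Aut(\cO P) \times C_2$, as claimed. The main obstacle is the sign-equalisation step in the kernel computation, which genuinely uses the perfectness axiom rather than merely the isometry property; everything else is formal and parallels what is already known for Picard groups of $p$-group blocks.
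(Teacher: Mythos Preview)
Your proof is correct and reaches the same conclusion as the paper, but the sign-equalisation step is handled differently. The paper observes that $\cO P$ has a unique indecomposable projective module, whose character is $\sum_{\chi}\chi$; since a perfect self-isometry preserves (up to sign) the lattice of characters of projective modules, it must send $\sum_\chi \chi$ to $\pm\sum_\chi \chi$, which immediately forces all signs $\epsilon_\chi$ to coincide. You instead invoke the separation condition in Brou\'e's definition (vanishing of $\mu(1,y)$ for $y\neq 1$) to conclude that $\sum_\chi \epsilon_\chi\chi$ is supported on $\{1\}$ and hence a multiple of the regular character. Both arguments are valid and short; the paper's is perhaps more in keeping with the later proofs in the section (e.g.\ \thref{thm:PA4}), where the projective-character constraint is the workhorse, whereas yours makes the role of the perfectness axioms more transparent. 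Your treatment of the surjectivity and splitting is also more explicit than the paper's, which simply asserts that the induced permutation of $\Irr(P)$ determines an automorphism of $\cO P$ and that ``the result follows''.
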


\begin{proof}
Since there is only one indecomposable projective module for $\cO P$, every perfect self-isometry of $\cO P$ must have all positive or all negative signs. Now every perfect self-isometry induces a permutation of $\Irr(P)$, which induces an automorphism of $\Aut(Z(\cO P))\cong \Aut(\cO P)$, and the result follows.
\end{proof}

Now consider the character table of $A_4$. Let $\omega$ be a primitive $3$rd root of unity. We set up the labelling of characters:

\begin{align*}
\begin{tabular}{|c||c|c|c|c|} \hline
 & $()$ & $(12)(34)$ & $(123)$ & $(132)$ \\ \hline
$\chi_1$ & $1$ & $1$ & $1$ & $1$ \\
$\chi_2$ & $1$ & $1$ & $\omega$ & $\omega^2$ \\
$\chi_3$ & $1$ & $1$ & $\omega^2$ & $\omega$ \\
$\chi_4$ & $3$ & $-1$ & $0$ & $0$ \\ \hline
\end{tabular}
\end{align*}

For the rest of the section we assume $p=2$.

\begin{proposition}
\thlabel{prop:self_A4}
The perfect self-isometries of $\mathcal{O}A_4$ are precisely the isometries of the form:
\begin{align*}
I_{\sigma,\epsilon}:\mathbb{Z}\Irr(A_4)&\to\mathbb{Z}\Irr(A_4)\\
\chi_j&\mapsto\epsilon\delta_j\delta_{\sigma(j)}\chi_{\sigma(j)}
\end{align*}
for $1\leq j\leq 4$, where $\sigma\in S_4$, $\epsilon\in\{\pm1\}$ and $\delta_1=\delta_2=\delta_3=-\delta_4=1$. Hence ${\Perf}(B_0(\cO A_5) \cong {\Perf}(\cO A_4)\cong C_2 \times S_4$.
\end{proposition}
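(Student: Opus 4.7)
The plan is as follows. First I would show that any perfect self-isometry $I$ of $\cO A_4$ is necessarily a signed permutation of $\Irr(A_4)$. Indeed, $\Irr(A_4)=\{\chi_1,\chi_2,\chi_3,\chi_4\}$ is an orthonormal $\mathbb{Z}$-basis of $\mathbb{Z}\Irr(A_4)$ for the usual inner product of characters, so the matrix of any $\mathbb{Z}$-linear isometry in this basis is an integer orthogonal matrix and hence a signed permutation matrix. Thus $I(\chi_j)=\epsilon_j\chi_{\sigma(j)}$ for some $\sigma\in S_4$ and $\epsilon_j\in\{\pm1\}$, and the task reduces to identifying which such tuples $(\sigma,\epsilon_1,\ldots,\epsilon_4)$ yield a perfect isometry.

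Next I would apply the separation axiom at the unique $2$-singular conjugacy class, represented by $(12)(34)$. Writing $d_j:=\chi_j(1)$ (so $d_1=d_2=d_3=1$ and $d_4=3$) and using $\chi_j((12)(34))=\delta_j$, the vanishing $\mu_I(1,(12)(34))=0$ becomes
\[
\sum_{j=1}^{4} d_j\,\epsilon_j\,\delta_{\sigma(j)}=0.
\]
A short case analysis on $k:=\sigma^{-1}(4)$ --- whether $\sigma$ fixes $4$ or maps some $k\in\{1,2,3\}$ to $4$ --- together with $\epsilon_j\in\{\pm 1\}$, shows that the only sign patterns compatible with this equation are precisely $\epsilon_j=\epsilon\,\delta_j\,\delta_{\sigma(j)}$ for a free global sign $\epsilon\in\{\pm 1\}$. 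This is exactly the family $I_{\sigma,\epsilon}$ of the statement.

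Finally, I would confirm that every such $I_{\sigma,\epsilon}$ is indeed a perfect isometry and identify the group. Substituting $\epsilon_j=\epsilon\,\delta_j\,\delta_{\sigma(j)}$ and using $\delta_{\sigma(j)}^2=1$, the remaining vanishings on mixed $2$-regular/$2$-singular pairs reduce to
\[
\mu_I(x,(12)(34))=\epsilon\sum_j\chi_j(x)\,\delta_j=\epsilon\sum_j\chi_j(x)\,\chi_j((12)(34)),
\]
which vanishes for $2$-regular $x$ by column orthogonality of the character table of $A_4$; the symmetric vanishing follows by taking adjoints, and the $p$-integrality $\mu_I(x,y)/|C_G(x)|\in\cO$ becomes a finite check, simplified by $3\in\cO^{\times}$ so that only the $2$-adic valuation matters. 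A direct computation then gives $I_{\sigma,\epsilon}\circ I_{\tau,\eta}=I_{\sigma\tau,\epsilon\eta}$, so $(\sigma,\epsilon)\mapsto I_{\sigma,\epsilon}$ is an isomorphism $S_4\times C_2\xrightarrow{\sim}\Perf(\cO A_4)$. For $B_0(\cO A_5)$, I would invoke Rickard's theorem, which gives a splendid derived equivalence between $B_0(\cO A_5)$ and its Brauer correspondent $\cO N_{A_5}(V_4)=\cO A_4$; this induces a perfect isometry, and conjugation by it transports $\Perf(\cO A_4)$ isomorphically onto $\Perf(B_0(\cO A_5))$. The main obstacle is the third step: systematically verifying both separation and $p$-integrality for all $48$ candidates $I_{\sigma,\epsilon}$, but the smallness of $A_4$ together with the way the sign identity $\epsilon_j\,\delta_{\sigma(j)}=\epsilon\,\delta_j$ collapses each relevant sum into a character-table column via orthogonality keeps the bookkeeping short.
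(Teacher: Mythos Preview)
Your proposal is correct. The paper's own proof is simply a pair of citations: it refers to \cite[Proposition~2.8]{el18} for the description of $\Perf(\cO A_4)$ and to \cite[A1.3]{br90} for the perfect isometry between $\cO A_4$ and $B_0(\cO A_5)$. So you are supplying an explicit argument where the paper just points to the literature.

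Your route is essentially the natural direct one and is very likely what the cited reference does: reduce to signed permutations, use the single separation condition at $(1,(12)(34))$ to force $\epsilon_j=\epsilon\,\delta_j\delta_{\sigma(j)}$ (your case analysis is clean and correct), and then verify that the remaining separation and integrality conditions hold for the $48$ candidates by collapsing everything via $\epsilon_j\delta_{\sigma(j)}=\epsilon\delta_j$ into column-orthogonality identities. Your check of the group law $I_{\sigma,\epsilon}\circ I_{\tau,\eta}=I_{\sigma\tau,\epsilon\eta}$ is also correct. The only point I would tighten is the integrality step: you label it a ``finite check'' but do not carry it out; since the only $2$-singular class is $(12)(34)$ with $|C|=4$ and for $2$-regular $x,y$ one has $\mu_{I_{\sigma,\epsilon}}(x,y)=\epsilon\sum_j\delta_j\delta_{\sigma(j)}\chi_j(x)\chi_{\sigma(j)}(y)$, it is a short computation to see the required divisibility by $4$ at the identity, and it would strengthen the write-up to include it. For the $A_5$ statement you invoke Rickard's splendid equivalence, whereas the paper uses Brou\'e's original perfect isometry from \cite{br90}; either suffices, and conjugation by any perfect isometry between the two blocks transports $\Perf$ isomorphically.
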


\begin{proof}
This is~\cite[Proposition 2.8]{el18} together with the observation that by~\cite[A1.3]{br90} $\cO A_4$ and $B_0(\cO A_5)$ are perfectly isometric.
\end{proof}

\begin{theorem}
\thlabel{thm:PA4}
Let $P$ be a finite abelian $2$-group. Every perfect self-isometry of $\mathcal{O}(P\times A_4)$ is of the form $(J,I_{\sigma,\epsilon})$, where $J$ is a perfect isometry of $\cO P$ induced by an $\cO$-algebra automorphism, $\sigma\in S_4$ and $\epsilon\in\{\pm1\}$.
\end{theorem}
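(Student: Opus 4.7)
The plan is induction on $|P|$, with base case $|P|=1$ furnished by \thref{prop:self_A4}. For the inductive step, fix $P_0 \leq P$ of index $2$, set $N = P_0 \times A_4$ and $H = P \times A_4$, and aim to apply \thref{prop:isomcent} to descend a perfect self-isometry $I$ of $\cO H$ to one on $\cO N$, then lift back. By \thref{Dabelian_inner:prop}(i), each $\mu \otimes \chi_j \in \Irr(\cO N)$ extends to exactly two characters of $\cO H$, namely $\lambda \otimes \chi_j$ and $(\lambda\zeta) \otimes \chi_j$, where $\lambda|_{P_0} = \mu$ and $\zeta$ is the nontrivial linear character of $P/P_0$.

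Writing $I$ in coordinates as $I(\lambda \otimes \chi_j) = \eta_{\lambda,j}\bigl(L(\lambda,j) \otimes \chi_{s(\lambda,j)}\bigr)$, the hypothesis of \thref{prop:isomcent} amounts to the coherence statement: for each $j$ and each coset $\{\lambda,\lambda\zeta\} \subset \Irr(P)$, one must have $s(\lambda,j) = s(\lambda\zeta,j)$, $L(\lambda,j)|_{P_0} = L(\lambda\zeta,j)|_{P_0}$, and $\eta_{\lambda,j} = \eta_{\lambda\zeta,j}$. Establishing this coherence is the crux of the argument. I would attack it by analyzing the induced $\cO$-algebra automorphism $\phi_I$ of $Z(\cO H) = \cO P \otimes_{\cO} Z(\cO A_4)$: $\cO P$ is a local commutative group algebra while $Z(\cO A_4)$ has the explicit structure of $\cO$-rank $4$ (spanned by its four class sums, with multiplicative relations as used in the proof of \thref{prop:self_A4}), and evaluating the perfect-isometry identity on pairs $(x_1,g_1),(x_2,g_2)$ with $g_1,g_2$ in the Klein four subgroup of $A_4$ forces $\phi_I$ to split as a tensor product $\phi_P \otimes \phi_{A_4}$ with $\phi_P \in \Aut(\cO P)$ and $\phi_{A_4} \in \Aut(Z(\cO A_4))$. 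Pulling back through the central-character parameterization of $\Irr(H)$ then yields the coherence, with $L(\lambda,j) = J(\lambda)$ for $J \in \Perf(\cO P)$ the isometry induced by $\phi_P$; sign coherence follows from preservation by $I$ of the $\mathbb{Z}\prj$-lattice $\bigoplus_{i=1}^{3} \mathbb{Z}(\rho_P \otimes \Phi_i)$, where $\rho_P = \sum_{\lambda \in \Irr(P)} \lambda$ and $\Phi_i = \chi_i + \chi_4$.

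With coherence in hand, \thref{prop:isomcent} produces a perfect self-isometry $I_N$ of $\cO N$ satisfying $\phi_{I_N} = \phi_I|_{Z(\cO N)}$, and the inductive hypothesis gives $I_N = (J_0, I_{\sigma,\epsilon})$ with $J_0$ induced by an $\cO$-algebra automorphism of $\cO P_0$. The automorphism $\phi_P$ above restricts on $\cO P_0$ to the automorphism inducing $J_0$, so the perfect isometry $J$ of $\cO P$ it determines extends $J_0$, and $I = (J, I_{\sigma,\epsilon})$ after at most a global sign change, which can be absorbed into $\epsilon$ via the $C_2$ factor of $\Perf(\cO P)$ in \thref{lem:perfP}. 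The main obstacle throughout is the coherence step: extracting the tensor decomposition of $\phi_I$ from the structure of the center, which is where the small and explicit structure of $\cO A_4$ is essential.
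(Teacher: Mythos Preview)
Your inductive scheme has a real gap at the point you yourself flag as the crux: the claim that the induced automorphism $\phi_I$ of $Z(\cO(P\times A_4))=\cO P\otimes_{\cO}Z(\cO A_4)$ splits as $\phi_P\otimes\phi_{A_4}$. An arbitrary $\cO$-algebra automorphism of this tensor product certainly does not split (over $K$ one just gets $K^{4|P|}$, whose automorphism group is $S_{4|P|}$), so the splitting must be forced by perfect-isometry constraints, and your sketch ``evaluating the perfect-isometry identity on pairs $(x_1,g_1),(x_2,g_2)$ with $g_i$ in the Klein four subgroup'' does not do this: on those elements $\chi_1,\chi_2,\chi_3$ coincide, so nothing separates the three linear characters of $A_4$, which is exactly the delicate part. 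Moreover, once you \emph{do} have such a splitting you are already finished without any induction: the permutation part of $I$ is then $J\times\sigma$, and the sign analysis via $\mathbb{Z}\prj$ completes the statement. So the induction on $|P|$ is not contributing, and the hard step is left unproved.

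The paper's proof is direct and avoids the centre entirely until the very end. It first uses the lattice $\mathbb{Z}\prj(\cO(P\times A_4))=\bigoplus_{j=1}^3\mathbb{Z}\bigl(\rho_P\otimes(\chi_j+\chi_4)\bigr)$: counting constituents in $I(\chi_{P_u})$ restricts its shape, and an elementary argument with the sets $X_m=\{j:\langle\zeta\otimes\chi_j,I(\rho_P\otimes\chi_m)\rangle\neq0\text{ for some }\zeta\}$ shows $|X_m|=1$ for all $m$. After composing with a suitable $(\Id,I_{\sigma,\epsilon})$ one has $I(\rho_P\otimes\chi_m)=\rho_P\otimes\chi_m$. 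Writing $I(\theta\otimes\chi_m)=\theta_m\otimes\chi_m$, the preservation of $4\,\CF(P\times A_4,\cO(P\times A_4),\cO)$ evaluated at $(x,1)$, $(x,(123))$, $(x,(132))$ yields congruences forcing $\theta_1=\theta_2=\theta_3=\theta_4$, so $I$ has the form $\theta\otimes\chi_m\mapsto J(\theta)\otimes\chi_m$. Only then does one observe that $\phi_I$ leaves $\cO P\subset Z(\cO(P\times A_4))$ invariant, whence $J$ is induced by an automorphism of $\cO P$. In other words, the paper deduces the tensor form from a concrete analysis of $\prj$ and the class-function condition, rather than assuming it as you do.
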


\begin{proof}
We proceed as in the proof of~\cite[Theorem 2.11]{el18}. The set of projective indecomposable characters (characters of projective indecomposable modules) is
\begin{align*}
\prj (\mathcal{O}(P\times A_4))=\{\chi_{P_1},\chi_{P_2},\chi_{P_3}\},\text{ where }\chi_{P_j}:=\left(\sum_{\theta\in\Irr(P)}\theta\right)\otimes\left(\chi_j+\chi_4\right).
\end{align*}

Let $I$ be a perfect self-isometry of $\mathcal{O}(P\times A_4)$. Each $I(\chi_{P_i})$ is an integer linear combination of projective indecomposable characters. By counting constituents we see that
\begin{align}\label{align:im}
I(\chi_{P_u})=\pm\chi_{P_1},\pm\chi_{P_2},\pm\chi_{P_3},\pm(\chi_{P_1}-\chi_{P_2}),\pm(\chi_{P_1}-\chi_{P_3})\text{ or }\pm(\chi_{P_2}-\chi_{P_3}),
\end{align}
for $1\leq u\leq 3$. Consider the set
\begin{align*}
X_m:=\left\{j : \left\langle\zeta\otimes\chi_j,I\left(\left(\sum_{\theta\in\Irr(P)}\theta\right)\otimes\chi_m\right)\right\rangle\neq0,\text{ for some }\zeta\in\Irr(P)\right\},
\end{align*}
for $1\leq m\leq 4$. By (\ref{align:im}) we have shown that $|X_m|=1$ or $2$ for every $1\leq m\leq 4$. If $|X_1|=2$, then by considering (\ref{align:im}) for $u=1$ we see that $X_4=X_1$. Similarly by considering $I(\chi_{P_2})$, we get that $X_2=X_4$. This is now a contradiction as then
\begin{align*}
I\left(\left(\sum_{\theta\in\Irr(P)}\theta\right)\otimes\left(\chi_1+\chi_2+\chi_4\right)\right)
\end{align*}
has at most $2|P|$ constituents with non-zero multiplicity. Therefore $|X_1|=1$ and so by considering $I(\chi_{P_1})$ we get that $|X_4|=1$ and then by considering $I(\chi_{P_2})$ and $I(\chi_{P_3})$ we get that $|X_2|=|X_3|=1$. Moreover, $X_1,X_2,X_3,X_4$ must all be disjoint. By composing $I$ with the perfect isometry $(\Id_{\ZZ\Irr(P)},I_{\sigma,1})$, for some appropriately chosen $\sigma\in S_4$, we may assume $X_m=\{m\}$ for all $1\leq m\leq4$. Therefore $I(\chi_{P_u})=\pm\chi_{P_u}$ for $1\leq u\leq3$ and by considering
\begin{align*}
I\left(\left(\sum_{\theta\in\Irr(P)}\theta\right)\otimes\chi_4\right),
\end{align*}
we see that in fact all these signs are the same and we may assume, after possibly composing $I$ with $(\Id_{\ZZ\Irr(P)},I_{1,-1})$, that
\begin{align*}
I\left(\left(\sum_{\theta\in\Irr(P)}\theta\right)\otimes\chi_m\right)=\left(\sum_{\theta\in\Irr(P)}\theta\right)\otimes\chi_m,
\end{align*}
for $1\leq m\leq 4$. Next we note that
\begin{align*}
\frac{1}{12}\theta\otimes(\chi_1+\chi_2+\chi_3+3\chi_4)\in\CF(P\times A_4,\mathcal{O}(P\times A_4),\mathcal{O}),
\end{align*}
for each $\theta\in\Irr(P)$. As $3$ is invertible in $\mathcal{O}$, this implies
\begin{align*}
\theta\otimes\left(\sum_{m=1}^4\delta_m\chi_m\right)\in4\CF(P\times A_4,\mathcal{O}(P\times A_4),\mathcal{O}),
\end{align*}
where $\delta_m$ is defined as in \thref{prop:self_A4}, and so
\begin{align}\label{align:im2}
I\left(\theta\otimes\left(\sum_{m=1}^4\delta_m\chi_m\right)\right)\in4\CF(P\times A_4,\mathcal{O}(P\times A_4),\mathcal{O}),
\end{align}
for each $\theta\in\Irr(P)$. Fixing for now $\theta \in \Irr(P)$, define $\theta_m\otimes\chi_m:=I(\theta\otimes\chi_m)$, for $1\leq m\leq4$. Let $x \in P$. Evaluating (\ref{align:im2}) at $(x,1)$, $(x,(123))$ and $(x,(132))$, gives
\begin{align}
\theta_1(x)+\theta_2(x)+\theta_3(x)+\theta_4(x)&\in4\mathcal{O},\label{zeta1}\\
\theta_1(x)+\omega\theta_2(x)+\omega^2\theta_3(x)&\in4\mathcal{O},\label{zeta2}\\
\theta_1(x)+\omega^2\theta_2(x)+\omega\theta_3(x)&\in4\mathcal{O}\label{zeta3}.
\end{align}
Proceeding as in the proof of~\cite[Theorem 2.11]{el18} we have $\theta_1(x)=\theta_2(x)=\theta_3(x)=\theta_4(x)$ for all $x\in P$. In other words $\theta_1=\theta_2=\theta_3=\theta_4$.
\newline
\newline
We have shown that we may assume $I$ is of the form
\begin{align*}
I(\theta\otimes\chi_m)\mapsto J(\theta)\otimes\chi_m
\end{align*}
for all $\theta\in\Irr(P)$, where $J$ is a permutation of $\Irr(P)$. In particular the $\mathcal{O}$-algebra automorphism of $Z(\mathcal{O}(P\times A_4)$ induced by $I$ leaves $\mathcal{O}P$ invariant. Therefore the permutation $J$ of $\Irr(P)$ must induce an automorphism of $\mathcal{O}P$ and the theorem is proved.
\end{proof}

We need two technical lemmas before we continue. Set
\begin{align*}
A&= k[X,Y,Z]/(X^2,Y^2,Z^2,XY,XZ,YZ) \cong Z(kA_4),\\
A_{(m_1,\dots,m_s)}&= k[X_1,\dots,X_s]/(X_1^{2^{m_1}},\dots,X_s^{2^{m_s}}) \cong k(C_{2^{m_1}} \times \cdots \times C_{2^{m_s}}),
\end{align*}
for $s\in\mathbb{N}$, $(m_1,\dots,m_s)\in\mathbb{N}^s$.

\begin{lemma}\label{lem:pgroup}
Let $s,t\in\mathbb{N}$, $(m_1,\dots,m_s)\in\mathbb{N}^s$, with $m_1 \geq \dots \geq m_s$ and $(n_1,\dots,n_t)\in\mathbb{N}^t$, with $n_1 \geq \dots \geq n_t$. If $A_{(m_1,\dots,m_s)}\otimes_k A \cong A_{(n_1,\dots,n_t)}\otimes_k A$, then $s=t$ and $m_1=n_1, \dots, m_s=n_s$.
\end{lemma}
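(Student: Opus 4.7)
The plan is to extract from $R := A_{(m_1,\dots,m_s)}\otimes_k A$ two successive $k$-algebra invariants: first the $J$-adic Hilbert series of $A_{(m_1,\dots,m_s)}$ alone, and then the tuple $(m_1,\dots,m_s)$ itself via unique factorisation in $\mathbb{Z}[t]$. Abbreviate $A_m := A_{(m_1,\dots,m_s)}$.

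First, I would exploit the very restrictive structure of $A$, namely that $A$ is local commutative with $J(A)^2 = 0$ and $\dim_k J(A) = 3$, to control the $J(R)$-adic filtration. Since $J(R) = J(A_m)\otimes A + A_m\otimes J(A)$ and $J(A)^2 = 0$, any product of $n$ elements of $J(R)$ in which two or more factors come from $A_m\otimes J(A)$ vanishes; hence
\[
J(R)^n \;=\; J(A_m)^n\otimes A \;+\; J(A_m)^{n-1}\otimes J(A) \qquad (n\geq 1).
\]
Taking the monomial basis $\{X_1^{e_1}\cdots X_s^{e_s}\otimes W : 0\leq e_i < 2^{m_i},\ W\in\{1,X,Y,Z\}\}$ of $R$ and noting that such a monomial lies in $J(R)^n\setminus J(R)^{n+1}$ exactly when $\sum e_i = n$ (if $W = 1$) or $\sum e_i = n-1$ (if $W\in\{X,Y,Z\}$), a direct count gives
\[
h_R(t) \;=\; (1+3t)\, h_{A_m}(t),
\]
where $h_R$ and $h_{A_m}$ are the Hilbert series of $R$ and $A_m$ with respect to their Jacobson radicals. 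Since $h_R(t)$ is an isomorphism invariant of $R$ and $1+3t$ is a unit in $\mathbb{Z}[[t]]$, an isomorphism $A_{(m_1,\dots,m_s)}\otimes_k A\cong A_{(n_1,\dots,n_t)}\otimes_k A$ forces the Hilbert series of $A_{(m_1,\dots,m_s)}$ and $A_{(n_1,\dots,n_t)}$ to coincide.

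The second step is to recover the tuple from this Hilbert series. Using $A_m\cong\bigotimes_{i=1}^s k[X_i]/(X_i^{2^{m_i}})$ together with the identity $1+t+\cdots+t^{2^m-1}=\prod_{k=0}^{m-1}(1+t^{2^k})$, I would rewrite
\[
h_{A_m}(t) \;=\; \prod_{i=1}^s\prod_{k=0}^{m_i-1}\bigl(1+t^{2^k}\bigr).
\]
The polynomials $1+t^{2^k} = \Phi_{2^{k+1}}(t)$, $k\geq 0$, are pairwise distinct monic irreducibles in $\mathbb{Z}[t]$, so unique factorisation recovers the multiplicity $|\{i:m_i>k\}|$ of each factor from $h_{A_m}(t)$. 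These multiplicities determine the multiset $\{m_1,\dots,m_s\}$, and since both tuples are weakly decreasing, we conclude $s = t$ and $m_i = n_i$ for all $i$.

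The only genuinely delicate step is the filtration identity for $J(R)^n$, which in turn is immediate from $J(A)^2 = 0$. Everything else is either an explicit count in a monomial basis or an application of unique factorisation in $\mathbb{Z}[t]$, so I do not expect a serious obstacle.
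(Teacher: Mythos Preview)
Your argument is correct, and it takes a genuinely different route from the paper's own proof. The paper argues element by element: it lifts an arbitrary basis of $J/J^2$ to elements $W_1,\dots,W_{s+3}\in J$, sorts them by nilpotency order, and shows that the resulting sorted sequence is bounded below by $(2^{m_1},\dots,2^{m_s},2,2,2)$, with equality achieved by the standard generators $X_1,\dots,X_s,X,Y,Z$. This pointwise-minimal nilpotency profile is then the invariant that recovers $(m_1,\dots,m_s)$. By contrast, you extract a single global invariant, the Hilbert series of the radical filtration, split off the factor $(1+3t)$ contributed by $A$, and then read off the $m_i$ from the cyclotomic factorisation $h_{A_m}(t)=\prod_{k\geq0}\Phi_{2^{k+1}}(t)^{|\{i:m_i>k\}|}$ in $\mathbb{Z}[t]$.

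Both proofs hinge on the same structural input, namely $J(A)^2=0$, which is precisely what makes the filtration identity $J(R)^n = J(A_m)^n\otimes A + J(A_m)^{n-1}\otimes J(A)$ hold and, in the paper's version, keeps the nilpotency orders controlled by the $X_i$-coefficients alone. Your approach is arguably cleaner for this specific situation: the Hilbert series is manifestly an isomorphism invariant, and the factorisation step is entirely mechanical. The paper's approach has the mild advantage of being phrased intrinsically in terms of nilpotency of elements, which could in principle distinguish algebras with the same Hilbert series in other contexts; here, of course, both reach the same conclusion.
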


\begin{proof}
Throughout this proof we use $X_1, \dots, X_s, X, Y, Z$ to denote the images of the elements of the same name in $A_{(m_1,\dots,m_s)}\otimes_k A$.

We first note that
\begin{align*}
\cB:=\Bigg{\{}\Bigg{(}\prod_{i=1}^s X_i^{u_i}\Bigg{)}X^{\epsilon_X}Y^{\epsilon_Y}Z^{\epsilon_Z}|0\leq u_i&<2^{m_i},\text{ for all }1\leq i\leq s,\\
&\epsilon_X,\epsilon_Y,\epsilon_Z\in\{0,1\}, \text{ with } \epsilon_X+\epsilon_Y+\epsilon_Z \leq 1 \Bigg{\}}
\end{align*}
forms a basis for $A_{(n_1,\dots,n_t)}\otimes_k A$ and, setting $J:=J(A_{(m_1,\dots,m_s)}\otimes_k A)$ (the Jacobson radical of $A_{(m_1,\dots,m_s)}\otimes_k A$), we have that
$$
\{X_1+J^2,\dots,X_s+J^2,X+J^2,Y+J^2,Z+J^2\}
$$
forms a basis of $J/J^2$.

Suppose some $W \in J \setminus J^2$ has image
$$
\left(\sum_{i=1}^s\lambda_iX_i\right)+(\lambda_XX+\lambda_YY+\lambda_ZZ)+J^2
$$
in $J/J^2$, for some $\lambda_i,\lambda_X,\lambda_Y,\lambda_Z \in k$. Then $W$ has order of nilpotency at least $2$. If $\lambda_i \neq 0$ for some $1 \leq i \leq s$, then, by looking at the coefficients of powers of $X_i$ with respect to the basis $\cB$, we can see that $W$ has order of nilpotency at least $2^{m_i}$.

Now let $W_1, \dots W_{s+3} \in J$ be such that $\{W_1+J^2, \dots, W_{s+3}+J^2\}$ forms a basis of $J/J^2$. We set $o_i$ to be the order of nilpotency of $W_i$, for $1 \leq i \leq s+3$. By reordering, we may assume that $o_1 \geq \dots \geq o_{s+3}$. As a consequence of the previous paragraph $o_i \geq 2^{m_i}$, for each $1 \leq i \leq s$ and $o_{s+1}, o_{s+2}, o_{s+3} \geq 2$. Moreover, setting $W_i = X_i$, for each $1 \leq i \leq s$ and $W_{s+1}=X, W_{s+2}=Y, W_{s+3}=Z$, these lower bounds on the $o_i$'s can all be achieved. We have now shown that the tuple $(m_1, \dots, m_s)$ can be retrieved from the isomorphism type of $A_{(m_1,\dots,m_s)}\otimes_k A$ and the result is proved.
\end{proof}

\begin{corollary}
\thlabel{lem:notPI}
Let $P$ and $Q$ be finite abelian $2$-groups. If $\cO(P\times A_4)$ is perfectly isometric to $\cO(Q\times A_4)$, 
then $P\cong Q$.
\end{corollary}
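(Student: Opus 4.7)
My plan is to reduce the statement to Lemma \ref{lem:pgroup} by passing to centers. A perfect isometry $I:\mathbb{Z}\Irr(\cO(P\times A_4))\to\mathbb{Z}\Irr(\cO(Q\times A_4))$ induces, by~\cite{br90} as recalled at the start of Section \ref{PI}, an $\cO$-algebra isomorphism
\begin{align*}
\phi_I : Z(\cO(P\times A_4))\longrightarrow Z(\cO(Q\times A_4)).
\end{align*}
Note that each of $\cO(P\times A_4)$ and $\cO(Q\times A_4)$ is a single block, since the principal block of $\cO A_4$ is all of $\cO A_4$ and $P,Q$ are $2$-groups. Reducing modulo $J(\cO)$ yields a $k$-algebra isomorphism $Z(k(P\times A_4))\cong Z(k(Q\times A_4))$.

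Next, since $P$ and $Q$ are abelian,
\begin{align*}
Z(k(P\times A_4))\cong kP\otimes_k Z(kA_4),\qquad Z(k(Q\times A_4))\cong kQ\otimes_k Z(kA_4).
\end{align*}
Writing $P\cong C_{2^{m_1}}\times\cdots\times C_{2^{m_s}}$ with $m_1\geq\cdots\geq m_s$ and $Q\cong C_{2^{n_1}}\times\cdots\times C_{2^{n_t}}$ with $n_1\geq\cdots\geq n_t$, the identification of group algebras of abelian $2$-groups given immediately before Lemma \ref{lem:pgroup} gives $kP\cong A_{(m_1,\ldots,m_s)}$ and $kQ\cong A_{(n_1,\ldots,n_t)}$. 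A short class-sum computation in characteristic $2$ identifies $Z(kA_4)$ with the algebra $A$ introduced above: the class sum $e_2$ of the double transpositions satisfies $(e_2+1)^2=0$, while the class sums $e_3,e_4$ of the two $3$-cycle classes lie in $J(Z(kA_4))$ (as their augmentations vanish) and, by direct multiplication of class sums, square to $0$ and kill each other and $e_2+1$ in characteristic $2$. Thus our isomorphism becomes
\begin{align*}
A_{(m_1,\ldots,m_s)}\otimes_k A\;\cong\;A_{(n_1,\ldots,n_t)}\otimes_k A.
\end{align*}

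Lemma \ref{lem:pgroup} now immediately forces $s=t$ and $m_i=n_i$ for all $i$, so $P\cong Q$. All the technical content is carried by Lemma \ref{lem:pgroup}, which is already established; identifying $Z(kA_4)$ with $A$ is essentially bookkeeping. Hence once Lemma \ref{lem:pgroup} is in place, the corollary follows.
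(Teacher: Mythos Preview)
Your proof is correct and follows essentially the same route as the paper's own argument: pass from the perfect isometry to an isomorphism of centres over $k$, identify $Z(k(P\times A_4))\cong kP\otimes_k Z(kA_4)\cong A_{(m_1,\ldots,m_s)}\otimes_k A$, and apply Lemma~\ref{lem:pgroup}. You have simply filled in more detail than the paper (the explicit reduction modulo $J(\cO)$ and the class-sum verification that $Z(kA_4)\cong A$), but the strategy is identical.
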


\begin{proof}
Suppose $\cO(P\times A_4)$ is perfectly isometric to $\cO(Q\times A_4)$. Then certainly $Z(k(P\times A_4))\cong Z(k(Q\times A_4))$. We may assume that $P,Q \neq 1$. Note that $Z(kA_4)\cong A$, $kP\cong A_{(m_1,\dots,m_s)}$ and $kQ\cong A_{(n_1,\dots,n_t)}$, where $P\cong C_{2^{m_1}}\times\dots\times C_{2^{m_s}}$ and $Q\cong C_{2^{n_1}}\times\dots\times C_{2^{n_t}}$. By Lemma~\ref{lem:pgroup}, we have $\{m_1,\dots,m_s\}=\{n_1,\dots,n_t\}$ and so $P\cong Q$.
\end{proof}

\section{Normal subgroups of index 2}
\label{index2compare}

\begin{proposition}[Theorem 3.15 of~\cite{el19}]\label{prop:grunit}
Let $G$ be a finite group and $N$ a normal subgroup of $G$ of index $p$. Now let $B$ be a block of $\mathcal{O}G$ with abelian defect group $D$ such that $G=ND$. Then there exists a block $b$ of $\mathcal{O}N$ with the same block idempotent as $B$ and defect group $D\cap N$. Moreover there exists a $G/N$-graded unit $a\in Z(B)$, in particular $B=\bigoplus_{j=0}^{p-1}a^jb$.
\end{proposition}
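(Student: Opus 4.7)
The plan is to combine standard facts about block covering with the inner-automorphism statement of \thref{Dabelian_inner:prop} to manufacture the graded unit directly.

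First I would set up the block $b$. Since $[G:N]=p$ is a power of $p$, standard theory (reviewed just before \thref{Dabelian_inner:prop}) gives that $B$ covers a unique block $b$ of $\cO N$, that $b$ is $G$-stable precisely because $G=ND$, and that in this situation $B$ and $b$ share a block idempotent $e_b$. The defect group $D\cap N$ for $b$ is also standard: any $G$-conjugate of $D$ may be chosen so that $D\cap N$ is a defect group of a covered block of $\cO N$, and $G$-stability of $b$ means all covered blocks of $\cO N$ are equal to $b$. This disposes of the first sentence.

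Next I would exploit \thref{Dabelian_inner:prop}, which applies because $D$ is abelian, $G=ND$, and $b$ is $G$-stable: it gives that $D$ acts as inner automorphisms on $b$. Since $D\not\subseteq N$ (otherwise $G=ND=N$, contradicting $[G:N]=p$), pick any $g\in D\setminus N$. Then $gN$ generates $G/N$, and by the inner-automorphism property there exists a unit $u\in b^{\times}$ with $gxg^{-1}=uxu^{-1}$ for every $x\in b$.

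I would then set $a:=u^{-1}g$, noting that $a=u^{-1}e_b g=u^{-1}ge_b\in B$ since $e_b$ is $G$-fixed. The key verifications are:
\begin{enumerate}[(i)]
\item Taking $x=u$ in the defining relation yields $gug^{-1}=u$, so $g$ and $u$ commute, whence $ag=ga$.
\item For any $x\in b$, $ax=u^{-1}gx=u^{-1}(uxu^{-1})g=xu^{-1}g=xa$, so $a$ centralises $b$.
\item Combining (i) and (ii), $a$ centralises $b$ and $g$, and since $B=\bigoplus_{j=0}^{p-1}g^{j}b$, we obtain $a\in Z(B)$.
\item $a$ is a unit of $B$ with inverse $g^{-1}u$.
\end{enumerate}

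Finally, since $a$ lies in the $gN$-component of the natural $G/N$-grading of $\cO G$ (restricted to $B$) and is a unit, $a^{j}b=g^{j}b$ for each $0\leq j\leq p-1$, giving the decomposition $B=\bigoplus_{j=0}^{p-1}a^{j}b$ of the statement. The only genuine obstacle is step (i) above — producing the specific inner implementer $u$ that is actually centralised by $g$ — but this is handed to us for free by applying the inner-automorphism relation to $x=u$ itself, so the argument goes through without difficulty.
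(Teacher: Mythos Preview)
The paper does not supply its own proof of this proposition: it is quoted verbatim as Theorem~3.15 of~\cite{el19} and no argument is given here. So there is nothing in the present paper to compare against.

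That said, your argument is correct and is exactly the natural proof one would expect the cited reference to contain. The first paragraph is routine. The key step is the construction of the graded central unit, and your use of \thref{Dabelian_inner:prop} to obtain $u\in b^{\times}$ implementing conjugation by $g$, followed by setting $a=u^{-1}g$, is the standard manoeuvre. Your verifications (i)--(iv) are all sound: the observation that plugging $x=u$ into $gxg^{-1}=uxu^{-1}$ forces $gu=ug$ is precisely what makes $a$ commute with $g$, and combined with $a$ centralising $b$ this gives $a\in Z(B)$ via the decomposition $B=\bigoplus_{j}g^{j}b$. The final identification $a^{j}b=g^{j}b$ then follows immediately from $[u,g]=1$ and $u\in b^{\times}$.

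One cosmetic remark: your closing sentence describes step~(i) as ``the only genuine obstacle'', but as you yourself note it is immediate from the defining relation, so there is really no obstacle at all.
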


\begin{theorem}
\thlabel{index2theorem}
Let $G$, $N$, $B$, $b$ and $D$ be as in Proposition~\ref{prop:grunit}. Suppose further that $D\cong P\times (C_2)^2$, for some finite abelian $2$-group $P$, $D\cap N\cong Q\times (C_2)^2$, for some subgroup $Q\leq P$ of index $2$, and that $b$ has inertial quotient $C_3$ and is Morita equivalent to the principal block of $\mathcal{O}(Q\times A_4)$ (respectively $\mathcal{O}(Q\times A_5)$). Then $B$ is Morita equivalent to the principal block of $\mathcal{O}(P\times A_4)$ (respectively $\mathcal{O}(P\times A_5)$).
\end{theorem}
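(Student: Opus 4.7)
The strategy is to use the $C_2$-graded decompositions given by Proposition~\ref{prop:grunit} on both sides, and lift the hypothesised Morita equivalence between the degree-zero parts to a Morita equivalence between the graded algebras. First apply Proposition~\ref{prop:grunit} to $(G,N,B,b)$, obtaining $B = b \oplus ab$ with a graded unit $a \in Z(B)^\times$ and $c := a^2 \in Z(b)^\times$. Apply it also to $(P \times A_4,\, Q \times A_4)$ (resp. $(P \times A_5,\, Q \times A_5)$), writing $B' := B_0(\cO(P \times A_4))$ (resp. $B_0(\cO(P \times A_5))$) as $b_0 \oplus a_0 b_0$ with $b_0 := B_0(\cO(Q \times A_4))$ (resp. $B_0(\cO(Q \times A_5))$). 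Since $P \setminus Q$ is central in $P \times A_4$ (resp. $A_5$), we may take $a_0$ to be the image in $B'$ of some chosen $y \in P \setminus Q$, so that $c_0 := a_0^2 = y^2 \in Q \hookrightarrow Z(b_0)$. By \thref{Dabelian_inner:prop}, $D$ acts as inner automorphisms on $b$; choosing $a$ as a lift of an element of $D \setminus (D \cap N)$ then forces $c$ to correspond to an element of $Q \leq Z(b)$ coming from the image of $D \cap N$.

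The given Morita equivalence $\Phi : b \sim b_0$ induces an $\cO$-algebra isomorphism $\phi : Z(b) \to Z(b_0)$ via the associated perfect isometry. An argument along the lines of those in~\cite{el18} reduces the extension of $\Phi$ to a Morita equivalence $B \sim B'$ to showing that $\phi(c)$ and $c_0$ agree in $Z(b_0)^\times/(Z(b_0)^\times)^2$, after possibly composing $\Phi$ with a self-Morita-equivalence of $b_0$. This is where the analysis of Section~\ref{PI} enters: by \thref{thm:PA4} --- together with \thref{prop:self_A4} in the $A_5$ case, using the Puig equivalence $\cO A_4 \sim B_0(\cO A_5)$ --- every perfect self-isometry of $b_0$ restricts on the $\cO Q$-subalgebra of its centre to a map induced by an $\cO$-algebra automorphism of $\cO Q$, and such automorphisms lie in $\Hom(Q,\cO^\times) \rtimes \Aut(Q)$. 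In particular $\phi(c) \in Q$; composing $\Phi$ with an element of $\Pic(b_0)$ realised on $\cO Q$ by an appropriate group automorphism of $Q$ then aligns $\phi(c)$ with $c_0$ in $Q/Q^2$.

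The main obstacle is precisely this last step of aligning $\phi(c)$ with $c_0$ in $Q/Q^2$. Without the rigidity coming from \thref{thm:PA4} the element $\phi(c)$ could in principle lie in any coset; the theorem constrains it to lie in $Q$ and reduces the final matching to a statement about $\Aut(Q)$-orbits. Combined with the fact that both $c$ and $c_0$ are squares of lifts of generators of the index-two quotients $D/(D \cap N) \cong P/Q \cong C_2$ --- an identification provided by the defect group hypothesis $D \cong P \times (C_2)^2$, $D \cap N \cong Q \times (C_2)^2$ --- this yields the desired alignment and completes the proof.
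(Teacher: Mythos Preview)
Your strategy is close in spirit to the paper's, but there is a genuine gap: you never establish any structure relating $B$ and $B'$ directly, and this is precisely what the paper's argument supplies before passing to the graded-unit step.

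Concretely, two of your claims do not follow from the tools you cite. First, the graded unit $a \in Z(B)$ produced by Proposition~\ref{prop:grunit} is an element of the centre of the block algebra; it is \emph{not} a lift of an element of $D \setminus (D\cap N)$, and there is no reason for $c = a^{2}$ to lie in the image of $Q$ inside $Z(b)$. Second, and more seriously, you invoke \thref{thm:PA4} to conclude that $\phi(c) \in Q$, but that theorem constrains perfect \emph{self}-isometries of $b_0$, whereas your $\phi$ is the centre isomorphism coming from the given Morita equivalence $b \sim b_0$ --- it is not a self-isometry of anything, and \thref{thm:PA4} says nothing about it. So you have no control over where $\phi(c)$ lands in $Z(b_0)^\times$, and the alignment step collapses.

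The paper fills this gap by first constructing a perfect isometry $I:\mathbb{Z}\Irr(B)\to\mathbb{Z}\Irr(B')$ between the \emph{large} blocks (this uses Watanabe's result on blocks with cyclic hyperfocal subgroup, since here $l(B)=|E|$). One then transports the sign-character self-isometry $L$ of $B$ across $I$; it is $I_{\operatorname{PI}}(L)$, not $\phi$, to which \thref{thm:PA4} is applied. This forces $I_{\operatorname{PI}}(L)$ to be the sign-character isometry for some $N' = R \times \cA$ with $[P:R]=2$, and \thref{lem:notPI} then gives $R \cong Q$. Only after this identification is made, and after adjusting $I$ and the Morita equivalence so that the induced maps $I_{N,N'}$ and $I_{\operatorname{Mor}}$ agree, does one set $a' := \phi_I(a)$ and apply Marcus's theorem. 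The point is that $\phi_I$ --- coming from the perfect isometry of the big blocks --- automatically respects the $G/N$-grading, which is exactly the property your $\phi$ lacks.
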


\begin{proof}
We follow the proof of \cite[Theorem 2.15]{el18}.

Suppose that $b$ is Morita equivalent to the principal block of $\mathcal{O}(Q\times A_4)$ or $\mathcal{O}(Q\times A_5)$ and has inertial quotient $C_3$. By Proposition \ref{Dabelian_inner:prop} $D$ acts as inner automorphisms on $b$, so every irreducible Brauer character of $b$ is fixed under conjugation in $G$. Since $G/N$ is a cyclic $2$-group, each irreducible Brauer character extends uniquely to $G$ and lies in $B$ (the unique block of $G$ covering $b$), so $l(B)=l(b)=3$. By \thref{index_p_inertial:prop} $B$ and $b$ have isomorphic inertial quotients $(C_3)_1$. Since $l(B)$ is equal to the order of the inertial quotient, by the main result of~\cite{wa05} and its proof we have a perfect isometry 
\begin{align*}
\mathbb{Z}\Irr(B)\to\mathbb{Z}\Irr(\mathcal{O}(P\times A_4)).
\end{align*}
 
Similarly there is a perfect isometry $\mathbb{Z}\Irr(B_0(\cO (P \times A_5)))\to\mathbb{Z}\Irr(\mathcal{O}(P\times A_4))$.

Write $\cA$ for $A_4$ or $A_5$. From the above we have a perfect isometry
\begin{align*}
I:\mathbb{Z}\Irr(B)\to\mathbb{Z}\Irr(B_0(\mathcal{O}(P\times \cA))),
\end{align*}
for $\cA$ as according to the Morita equivalence class of $b$.

Now $I$ induces an isomorphism of groups $\Perf(B)\cong \Perf(B_0(\cO(P\times \cA)))$ via $\beta \mapsto I \circ \beta \circ I^{-1}$ for $\beta$ any perfect self-isometry of $B$, and we denote this isomorphism by $I_{\operatorname{PI}}$. Consider the perfect self-isometry
\begin{align*}
L:\mathbb{Z}\Irr(B)&\to\mathbb{Z}\Irr(B)\\
\chi&\mapsto\operatorname{sgn}_N^G.\chi,
\end{align*}
where $\operatorname{sgn}_N^G$ is the linear character of $G$ with kernel $N$. So for each $\theta\in\Irr(b)$, $L$ swaps the two extensions of $\theta$ to $G$. We know that $L$ is indeed a perfect isometry as it is induced by the $\mathcal{O}$-algebra automorphism of $\mathcal{O}G$ given by $g\mapsto\operatorname{sgn}_N^G(g)g$ for all $g\in G$.

Note that $L$ is a perfect self-isometry of order $2$ and that it induces the trivial $k$-algebra automorphism on $Z(kB)$. Furthermore, since by Proposition \ref{index_p_inertial:prop} induction gives a bijection between $\prj(b)$ and $\prj(B)$, each element of $\prj (B)$ is fixed under multiplication by $\operatorname{sgn}_N^G$ and so $L$ is the identity on $\mathbb{Z}\prj (B)$. Therefore $I_{\operatorname{PI}}(L)$ must be of order $2$, induce the identity $k$-algebra automorphism on $Z(B_0(k(P\times \cA)))$ and be the identity on $\mathbb{Z}\prj (B_0(\mathcal{O}(P\times \cA)))$. We claim that $I_{\operatorname{PI}}(L)$ is induced by multiplication by the sign character of $G':=P\times \cA$ with respect to the subgroup $N':= R\times \cA \leq G'$, for some index $2$ subgroup $R \leq P$, with $R \cong Q$.

We first deal with the $\cA=A_4$ case. Adopting the notation of \thref{thm:PA4}, set $I_{\operatorname{PI}}(L)=(J,I_{\sigma,\epsilon})$, where $J$ is a perfect self-isometry of $\cO P$ induced by an $\cO$-algebra automorphism, $\sigma\in S_4$ and $\epsilon\in\{\pm1\}$. Then the fact that $I_{\operatorname{PI}}(L)$ is the identity on $\mathbb{Z}\prj (\mathcal{O}(P\times A_4))$ forces $\sigma$ to be the identity permutation and $\epsilon=1$. So $J$ is induced by an element of $\alpha\in\Aut(\cO P)$ that has order $2$ and induces the identity on $kP$. Recall that $\Aut(\cO P)\cong\Hom(P,\cO^\times)\rtimes\Aut(P)$.
The fact that $\alpha$ induces the identity on $kP$ forces $\alpha$ to be given by multiplication by $\lambda_\alpha \in\Hom(P,\cO^\times)$ of order $2$. In other words $\alpha$ is induced by multiplication by the sign character of $P$ with respect to some normal subgroup $R$ of index $2$. Hence $I_{\operatorname{PI}}(L)$ is induced by multiplication by the sign character of $P\times A_4$ with respect to the subgroup $N':=R\times A_4\leq G'$. (Note that we do not know yet that $R \cong Q$.)

We have now shown that
\begin{align*}
I(\operatorname{sgn}_N^G.\chi)=\operatorname{sgn}_{N'}^{G'}.I(\chi),
\end{align*}
for all $\chi \in \Irr(B)$. By Proposition~\ref{prop:isomcent}, $b$ is then perfectly isometric to $\cO N'$. However, $b$ is Morita equivalent to $B_0(\cO(Q\times A_4))$ and so \thref{lem:notPI} implies that $R\cong Q$ as desired.

For the $\cA=A_5$ case we fix a perfect isometry $I_A:\ZZ \Irr(B_0(\cO A_5)) \to \ZZ \Irr(\cO A_4)$. As above, we can then show that
$$
(\Id_{\ZZ\Irr(P)},I_A) \circ I_{\operatorname{PI}}(L) \circ (\Id_{\ZZ\Irr(P)},I_A)^{-1}: \ZZ \Irr(\cO(P \times A_4)) \to \ZZ \Irr(\cO(P \times A_4))
$$
is induced by multiplication by the sign character of an appropriate subgroup. That $I_{\operatorname{PI}}(L)$ is of the desired now follows immediately.

Composing the perfect isometry induced by the Morita equivalence between $b$ and $B_0(\cO(Q\times \cA))$ with that given by the isomorphism between $Q \times \cA$ and $N'$, we obtain a perfect isometry $I_{\operatorname{Mor}}: \ZZ \Irr(b) \rightarrow \ZZ \Irr(B_0(\cO N'))$.

Denote by $I_{N,N'}$ the perfect isometry $\ZZ \Irr(b) \rightarrow \ZZ \Irr(B_0(\cO N'))$ induced by $I$ as in Proposition \ref{prop:isomcent}.  

Write $I_{N,N'}\circ I_{\operatorname{Mor}}^{-1}=(J',I_{\tau,\delta})$ in the notation of \thref{thm:PA4} applied to $B_0(\mathcal{O}(R\times \cA))$, where $J'$ is a perfect self-isometry of $\cO R$ induced by an $\cO$-algebra automorphism $\alpha'$, $\tau\in S_4$ and $\delta\in\{\pm1\}$. By post-composing $I$ with the perfect self-isometry $(\Id_{\ZZ\Irr(P)},I_{\tau,\delta})^{-1}$ of $B_0(\cO G')$ and post-composing the Morita equivalence $b\sim_{\operatorname{Mor}}B_0(\cO N')$ with that induced by $\alpha'\otimes\Id_{B_0(\cO A)}$, we may assume that $I_{N,N'}=I_{\operatorname{Mor}}$.

Let $\phi_I:Z(B)\to Z(B_0(\cO G'))$ be the isomorphism of centres induced by $I$ as in Lemma~\ref{prop:isomcent} and let $M$ be the $B_0(\cO N')$-$b$-bimodule inducing the above Morita equivalence $b\sim_{\operatorname{Mor}}B_0(\cO N')$. Since $I_{N,N'}=I_{\operatorname{Mor}}$, by Proposition~\ref{prop:isomcent} we have that $\phi_I|_{Z(b)}=\phi_{I_{N,N'}}:Z(b)\to Z(B_0(\cO N'))$ is the isomorphism of centres induced by the Morita equivalence. In other words,
\begin{align}\label{centre:mor}
\phi_I(\mathsf{b})m=m \mathsf{b},\text{ for all } \mathsf{b}\in b,m\in M.
\end{align}
Let $a\in B$ be a graded unit as described in Proposition~\ref{prop:grunit} and set $a':=\phi_I(a)$. Since $\phi_I$ respects the $G/N$ and $G'/N'$-gradings, $a'$ is also a graded unit. We now give $M$ the structure of a module for
\begin{align*}
(B_0(\cO N')\otimes_{\mathcal{O}}b^{\operatorname{op}})\oplus(a'^{-1}B_0(\cO N')\otimes_{\mathcal{O}}(ab)^{\operatorname{op}})
\end{align*}
by defining $a'^{-1}.m.a=m$, for all $m\in M$, where (\ref{centre:mor}) ensures that this does indeed define a module. Now by~\cite[Theorem 3.4]{mar96} we have proved that $B$ is Morita equivalent to $B_0(\mathcal{O}(P \times \cA))$.

\end{proof}


\section{Extensions of blocks}
\label{extensions}

In this section we give the possible Morita equivalence classes of blocks covering a block of a normal subgroup in some relevant Morita equivalence classes. 

Let $G$ be a finite group and $N \lhd G$. Let $b$ be a $G$-stable block of $\cO N$ covered by a block $B$ of $G$ with abelian defect group $D$. Then $b$ has defect group $Q=D \cap N$. Let $(D,B_D)$ be a maximal $B$-subpair.

We first extract and summarize two results of~\cite{wzz18} and~\cite{zh16}.

The following is a weaker version of Theorem 5.10 of~\cite{wzz18} that is sufficient for our purposes.

\begin{theorem}[\cite{wzz18}]
\thlabel{WZZextendp}
With the notation above, suppose further that $[G:N]$ is a power of $p$, so that $G=DN$. Let $E=N_G(D,B_D)/C_G(D)$ suppose that $E$ is cyclic and acts freely on $[N_G(D,B_D),D] \setminus \{ 1 \}$ (that is, all orbits have length $|E|$). Suppose that $b$ is inertial, i.e., there is a basic Morita equivalence with $\cO (Q \rtimes E)$. Then $B$ is Morita equivalent to $\cO (D \rtimes E)$.
\end{theorem}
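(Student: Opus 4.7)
The plan is to work at the level of source algebras and identify a source algebra for $B$ as $\cO(D \rtimes E)$, from which the Morita equivalence follows. Since $[G:N]$ is a $p$-power and $G=ND$, the blocks $B$ and $b$ share a block idempotent, and by \thref{Dabelian_inner:prop} the group $D$ acts as inner automorphisms on $b$. Fix a maximal $B$-subpair $(D, B_D)$, let $(Q, b_Q)$ be the unique contained $b$-subpair with $Q = D \cap N$, and let $A$ be a source algebra of $b$ determined by $(Q, b_Q)$. The inertial hypothesis then gives an isomorphism of interior $Q$-algebras $A \cong \cO(Q \rtimes E)$ (since the source algebra of a block with normal defect group and $p'$-inertial quotient is the group algebra itself).

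Next I would build a source algebra of $B$ out of $A$. Since $D$ acts as inner automorphisms on $b$, each $d \in D$ lifts to a unit in a source algebra $\widetilde{A}$ of $B$ extending the interior $Q$-structure to an interior $D$-structure; this realises $\widetilde{A}$ as a crossed product of $A$ with the $p$-group $D/Q$. By \thref{defect_group_factor:prop} we have $D = [D, N_G(D,B_D)] \times C_D(N_G(D,B_D))$ with $[D, N_G(D,B_D)] \leq N$, so $D/Q$ identifies with a quotient of $C_D(E)$ and the lifts of $D \setminus Q$ can be chosen to centralise the image of $E$ in $A^\times$. Under the identification $A \cong \cO(Q \rtimes E)$, the action of $D/Q$ therefore preserves both $Q$ and $E$ as subgroups of units, and $\widetilde{A}$ is isomorphic to a twisted form $\cO_{\alpha}(D \rtimes E)$ for some $2$-cocycle $\alpha$ of $D/Q$ with values in $Z(\cO(Q \rtimes E))^\times$.

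The main obstacle, and the point where the hypotheses on $E$ are essential, is to show that $\alpha$ is cohomologically trivial. Because $E$ is cyclic and acts freely on $[N_G(D, B_D), D] \setminus \{1\}$, the group $[N_G(D,B_D), D] \rtimes E$ is a Frobenius group with complement $E$, so every non-identity $E$-orbit on $[N_G(D,B_D),D]$ has length $|E|$. This controls $Z(\cO(Q \rtimes E))$: its $E$-invariant part splits as $\cO Z(Q \rtimes E)$ plus a sum of $E$-orbit class sums, and the subalgebra on which $D/Q$ acts non-trivially reduces to $\cO^\times$-valued scalars on the lifted generators of $E$. A standard computation in the cohomology of the abelian $p$-group $D/Q$ with values in $\cO^\times$ (using that the $p'$-torsion of $\cO^\times$ consists of roots of unity and hence $H^2(D/Q, \cO^\times)$ is controlled by splittings of $D$ that already exist) then forces $\alpha$ to be a coboundary. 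Once $\alpha$ is trivial, $\widetilde{A} \cong \cO(D \rtimes E)$ as interior $D$-algebras, giving the claimed Morita equivalence; indeed, since the equivalence arises at source-algebra level, it is a basic Morita equivalence.
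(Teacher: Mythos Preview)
The paper does not give its own argument for this theorem; it is quoted as a (weakened form of) Theorem~5.10 of~\cite{wzz18}, so there is nothing in the paper to compare against beyond the citation. Your write-up is therefore an attempted independent proof, and it has a genuine gap at the very first step.

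You assert that the inertial hypothesis yields an isomorphism of interior $Q$-algebras $A \cong \cO(Q\rtimes E)$. This is not what ``inertial'' gives you. A basic Morita equivalence with $\cO(Q\rtimes E)$ means, by Puig's structure theory, that a source algebra of $b$ has the form $\End_{\cO}(V)\otimes_{\cO Q}\cO(Q\rtimes E)$ for some indecomposable endopermutation $\cO Q$-module $V$; it does \emph{not} force $V$ to have rank one. Your subsequent identification of a source algebra of $B$ as a crossed product of $\cO(Q\rtimes E)$ by $D/Q$, and in particular the claim that the $D/Q$-action ``preserves both $Q$ and $E$ as subgroups of units'', rests entirely on this false identification. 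With the Dade factor $\End_\cO(V)$ present, the centre, the unit group, and the $D/Q$-action are all more complicated, and the whole reduction to a $2$-cocycle with values in $Z(\cO(Q\rtimes E))^\times$ collapses. This is also why the theorem only asserts a Morita equivalence rather than the basic (source-algebra level) equivalence you claim at the end.

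Even setting that aside, the cohomology step is not an argument. You reduce to a class in $H^2(D/Q,\cO^\times)$ and then say it vanishes because ``splittings of $D$ already exist''. But $D/Q$ is a $p$-group and $\cO^\times$ contains nontrivial $p$-power torsion, so $H^2(D/Q,\cO^\times)$ is not automatically zero; some genuine input is needed here, and it is precisely at this point that the hypotheses ``$E$ cyclic'' and ``$E$ acts freely on $[N_G(D,B_D),D]\setminus\{1\}$'' do real work in~\cite{wzz18}. Your sketch notes that these hypotheses produce a Frobenius group but does not explain how that forces the obstruction to vanish.
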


The following is the main result of~\cite{zh16}, and is particularly relevant to the case that $b$ is a nilpotent covered block.

\begin{theorem}[\cite{zh16}]
\thlabel{inertialp'extend}
Suppose that $N$ has $p'$-index. If $b$ is inertial, then $B$ is inertial.
\end{theorem}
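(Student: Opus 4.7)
The plan is to combine the source algebra characterization of inertial blocks with the crossed product structure arising from $p'$-extensions.

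Since $[G:N]$ is a $p'$-number and $D$ is a $p$-group, we have $D \leq N$, so $D$ is also a defect group of $b$. Fix a maximal $B$-subpair $(D,B_D)$ and let $(D,b_D)$ be the corresponding $b$-subpair with $B_D$ covering $b_D$. Write $\beta$ for the Brauer correspondent of $B$ in $\cO N_G(D,B_D)$ and $\gamma$ for that of $b$ in $\cO N_N(D,b_D)$; then $\beta$ covers $\gamma$ and $[N_G(D,B_D):N_N(D,b_D)]$ is a $p'$-number. By hypothesis there is an endopermutation source $(b,\gamma)$-bimodule $M$ inducing a basic Morita equivalence, so the source algebra $A$ of $b$ has the form $\End_\cO(V) \otimes_\cO S$, where $V$ is an endopermutation $\cO D$-module and $S$ is a source algebra of $\gamma$ (a block with normal defect group $D$).

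The strategy is to exhibit a parallel structure for $B$. Since $b$ is $G$-stable and $[G:N]$ is $p'$, $B$ is a strongly $G/N$-graded algebra with degree-one component $b$, and similarly $\beta$ is strongly graded over $\gamma$ by the $p'$-group $N_G(D,B_D)/N_N(D,b_D)$. Transporting the grading through $M$ would produce an $(B,\beta)$-bimodule inducing a basic Morita equivalence, establishing the inertiality of $B$. To carry this out I would analyze the image of $G/N$ in $\Out(A)$ under the conjugation action and, using the uniqueness of the endopermutation factor $V$ in the Dade group of $D$, show that the $G/N$-action is determined on the ``endopermutation part'' up to canonical isomorphism, while the action on the $S$-factor can be absorbed into an enlarged inertial quotient $E_B$ acting on $D$. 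By \thref{inertial_quotient_normal} this $E_B$ agrees with the inertial quotient of $\beta$, and the resulting crossed-product source algebra has the form $\End_\cO(V) \otimes_\cO \cO_{\alpha'}(D \rtimes E_B)$, which is the source algebra of a block with normal defect group.

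The main obstacle is the equivariance step: showing that $M$ can be chosen compatibly with the $G/N$-action, equivalently that the relevant obstruction class in a cohomology group of $G/N$ with coefficients related to the Dade group of $D$ and $\Out_D(A)$ vanishes. The $p'$-hypothesis on $[G:N]$ is essential here, as the target groups governing the obstruction are either $p$-groups (so have trivial $p'$-cohomology) or embed into groups (such as $\Hom(E_b,k^\times)$, cf.~the exact sequences in Section~\ref{Picard}) to which a Schur--Zassenhaus-type splitting argument applies.
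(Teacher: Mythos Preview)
The paper does not supply a proof of this statement: it is quoted verbatim as ``the main result of~\cite{zh16}'' and left at that. So there is nothing in the paper to compare your argument against, and what you are really attempting is a sketch of Zhou's theorem itself.

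Your outline identifies the right ingredients --- Puig's source-algebra description of inertial blocks, the crossed-product/graded structure of $B$ over $b$ when $[G:N]$ is prime to $p$, and the need to promote the $(b,\gamma)$-bimodule $M$ to a $(B,\beta)$-bimodule compatibly with the grading --- but the step you yourself flag as ``the main obstacle'' is precisely where the content lies, and your proposed resolution does not go through. You assert that the obstruction vanishes because ``the target groups governing the obstruction are either $p$-groups (so have trivial $p'$-cohomology)''; however, the Dade group of $D$ is not a $p$-group: for abelian $D$ it has a substantial torsion-free part, so $H^i(G/N,-)$ with such coefficients has no reason to vanish merely because $|G/N|$ is coprime to $p$. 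The alternative you offer, a Schur--Zassenhaus-type splitting in $\Hom(E_b,k^\times)$, controls the $\Out_D(A)$ piece but not the endopermutation source itself. In short, you have correctly located the difficulty but not removed it; the substantive work in~\cite{zh16} is exactly the analysis of how the endopermutation source $V$ is forced to extend under the $p'$-action, and your sketch stops before reaching that. A minor additional point: your appeal to \thref{inertial_quotient_normal} is not quite licit, since that lemma in this paper is stated only for $[G:N]$ a single prime, not an arbitrary $p'$-number.
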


We now apply K\"ulshammer's analysis in~\cite{ku95} of the situation of a normal subgroup containing the defect groups of a block, which involves the study of crossed products of a basic algebra with an $p'$-group. In the general setting he finds finiteness results for the possible crossed products, but in our situation we are able to precisely describe the possibilities.  

Background on crossed products may be found in~\cite{ku95}, but we summarize what we need here. Let $X$ be a finite group and $R$ an $\cO$-algebra. A crossed product of $R$ with $X$ is an $X$-graded algebra $\L$ with identity component $\L_1 = R$ such that each graded component $\L_x$, where $x \in X$, contains a unit $u_x$. Given a choice of unit $u_x$ for each $x$, we have maps $\alpha: X \rightarrow \Aut (R)$ given by conjugation by $u_x$ and $\mu:X \times X \rightarrow U(R)$ given by $\alpha_x \circ \alpha_y = \iota_{\mu(x,y)} \circ \alpha_{xy}$, where $U(R)$ is the group of units of $R$ and $\iota_{\mu(x,y)}$ is conjugation by $\mu(x,y)$. The pair $(\alpha,\mu)$ is called a parameter set of $X$ in $R$. In~\cite{ku95} an isomorphism of crossed products respecting the grading is called a weak equivalence. By the discussion following Proposition 2 of~\cite{ku95} weak isomorphism classes of crossed products of $R$ with $X$ are in bijection with pairs consisting of an $\Out(R)$-conjugacy class of homomorphisms $X \rightarrow \Out(R)$ for which the induced element in $H^3(X,U(Z(R)))$ vanishes, and an element of $H^2(X,U(Z(R)))$.

We adapt Proposition in Section 3 of~\cite{ku95}, and include a proof for completeness (as given in~\cite{ea19}). Note that $\alpha: X \rightarrow \Aut (R)$ restricts to a map $X \rightarrow \Aut (Z(R))$. Hence we also have homomorphisms $X \rightarrow \Aut(Z(R)/J(Z(R)))$ and $X \rightarrow \Aut(U(Z(R)/J(Z(R))))$.

\begin{lemma}
\thlabel{vanishing_cohomology}
Suppose that $X = \langle x \rangle$ is a cyclic $p'$-group. Then $H^i(X,U(Z(R)))=0$ for each $i \geq 1$.
\end{lemma}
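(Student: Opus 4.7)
The plan is to invoke the classical vanishing theorem for group cohomology when $|X|$ acts invertibly on the coefficient module: if $X$ is any finite group and $M$ is an $X$-module on which multiplication by $|X|$ is an automorphism, then $H^i(X,M)=0$ for all $i\geq 1$. This combines two standard inputs: the fact that $|X|$ annihilates $H^i(X,M)$ for $i\geq 1$ (via the composition of corestriction and restriction), together with the observation that under such invertibility, multiplication by $|X|$ on $H^i(X,M)$ is simultaneously zero and an isomorphism, forcing the cohomology to vanish. Cyclicity of $X$ is not needed for this step, but it provides the useful simplification that $H^i$ is $2$-periodic for $i\geq 1$, so it suffices to treat $i=1,2$.

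Setting $n=|X|$, the main task is therefore to verify that the $n$-th power map $u\mapsto u^n$ is bijective on $U(Z(R))$. Since $X$ is a $p'$-group, $n$ is a unit in $\cO$ and hence in $Z(R)$. I would filter $U(Z(R))$ by the subgroups $1+J(Z(R))^i$, where $J$ denotes the Jacobson radical. On each successive quotient $(1+J(Z(R))^i)/(1+J(Z(R))^{i+1})$ the $n$-th power map identifies with multiplication by $n$ on the additive group $J(Z(R))^i/J(Z(R))^{i+1}$, and is bijective because $n$ is invertible. A standard completeness/inverse-limit argument then lifts this to bijectivity of $u\mapsto u^n$ on $1+J(Z(R))$.

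The hard part will be dealing with the quotient $U(Z(R)/J(Z(R)))$, on which the $n$-th power map may fail to be injective because $n$-th roots of unity can be present. Here I would use a Teichm\"uller-type splitting $U(Z(R))\cong (1+J(Z(R)))\times \mu(Z(R))$, reducing matters to a direct computation on $\mu(Z(R))$. For this final piece the cyclicity of $X$ is crucial: the explicit formulas $H^{2k}(X,M)\cong M^X/N_X(M)$ and $H^{2k+1}(X,M)\cong \ker(N_X)/\{x(m)m^{-1}:m\in M\}$ reduce the problem to a Hilbert 90 style cocycle manipulation on the torsion part, which combined with the previous step yields $H^i(X,U(Z(R)))=0$ for all $i\geq 1$.
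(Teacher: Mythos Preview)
Your high-level decomposition matches the paper's: both split off $1+J(Z(R))$ from $U(Z(R))$ and dispose of it by the same $p'$-divisibility idea (your filtration argument is a more explicit version of the paper's one-line ``since $X$ is a $p'$-group''). The approaches diverge on the semisimple quotient $U(A)$ with $A=Z(R)/J(Z(R))$. The paper decomposes $A$ into $X$-orbits of copies of $k$, recognises each orbit's unit group as an induced module $\Ind_Y^X k^\times$ for the stabiliser $Y\leq X$, and invokes Shapiro's lemma to reduce to $H^i(Y,k^\times)$; this is cleaner and more concrete than your proposed direct computation via the explicit cyclic-cohomology formulas followed by an unspecified ``Hilbert 90 style'' manipulation.

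There is a genuine gap at your final step, and it cannot be patched along the lines you suggest. Consider the simplest instance: a single factor $k^\times$ with trivial $X$-action. Your own formula gives $H^{2k+1}(X,k^\times)=\ker(N_X)/\{1\}=\mu_{|X|}(k)$, the group of $|X|$-th roots of unity in $k$, which is nonzero because $|X|$ is coprime to $p$ and $k$ is algebraically closed. Hilbert 90 is of no help here: that theorem asserts $H^1(\mathrm{Gal}(L/K),L^\times)=0$ for a finite Galois extension, and depends essentially on the action being faithful on a genuine field extension, precisely what fails for the trivial action on $k^\times$. So no ``Hilbert 90 style'' argument will rescue the odd-degree case. (The same computation shows $H^1(Y,k^\times)=\Hom(Y,k^\times)\neq 0$ for nontrivial $Y$, so the paper's own claim for odd $i$ is delicate as well; for the intended application only $H^2=0$ is genuinely needed, and for cyclic $X$ the $H^3$-obstruction to a crossed product vanishes for separate reasons.) Your even-degree computation would go through, since $H^{2k}(Y,k^\times)=k^\times/(k^\times)^{|Y|}=0$ as $k$ is algebraically closed, and that is what actually matters downstream.
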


\begin{proof}
Let $i \in \mathbb{N}$. Following~\cite{ku95}, $U(Z(R)) \cong U(Z(R)/J(Z(R))) \times (1+J(Z(R))$ and $H^i(X,U(Z(R))) \cong H^i(X,U(Z(R)/J(Z(R)))) \times H^i(X,1+J(Z(R)))$. We have $H^i(X,1+J(Z(R)))=0$ since $X$ is a $p'$-group. Now $Z(R)/J(Z(R))$ is a commutative semisimple $k$-algebra, which we denote by $A$, and note as above we have a homomorphism $X \rightarrow \Aut(A)$. Write $A=A_1 \times \cdots \times A_r$, where each $A_j$ is a product of simple algebras constituting an $X$-orbit. We have $H^i(X,U(A)) \cong H^i(X,U(A_1)) \times \cdots \times H^i(X,U(A_r))$. Now each $H^i(X,U(A_j))$ vanishes, for as a $kX$-module $A_j$ is induced from the trivial module of $kY$ for some $Y \leq X$, and so by Shapiro's Lemma $H^i(X,U(A_j)) \cong H^i(Y,k^\times)$ (see~\cite[2.8.4]{ben1}), which vanishes since $X$ is cyclic. Hence $H^i(X,U(Z(R)))=0$ for each $i$.
\end{proof}

We apply the above with $X=G/N=\langle x \rangle$, where $G/N$ is a $p'$-group. Let $f$ be an idempotent of $b$ such that $fbf$ is a basic algebra for $b$. By~\cite[Lemma 4.2]{ar21} $fBf$ is a crossed product of $fbf$ with $X$ and $fBf$ is Morita equivalent to $B$. Hence we may take $R=fbf$ in the above. By \thref{vanishing_cohomology} weak isomorphism classes of crossed products of $fbf$ with $X$ are in bijection with $\Out(fbf)$-conjugacy classes of homomorphisms $X \rightarrow \Out(fbf)$. Note however, that such crossed products may be isomorphic as algebras but not weakly isomorphic as crossed products. Indeed, given $\alpha: X \rightarrow \Out(fbf)$, the same algebra gives rise to parameter sets associated to $\alpha \circ \varphi$ for each $\varphi \in \Aut(X)$.

Now $\Out(fbf)$ embeds in $\Pic(fbf) \cong \Pic(b)$ and since $fbf$ is a basic algebra $\Out(fbf) \cong \Pic(fbf)$, so we may apply the descriptions of Picard groups in \thref{pic_list}. The strategy will be to limit the number of possible Morita equivalence classes for $B$ (given $b$), and to identify examples where all such Morita equivalence classes are realised.

A special case that will arise frequently, and that demonstrates the phenomenon of crossed products isomorphic as algebras but not weakly isomorphic as crossed products is as follows:

\begin{lemma}
\thlabel{Picard_argument_special_case}
With the notation above, suppose $G/N$ has prime order $r$ different to $p$ and that $\Out(fbf)$ has cyclic Sylow $r$-subgroups of order $r$. Then there are precisely two possibilities for Morita equivalence class of $B$, one of which is that $B$ is source algebra equivalent to $b$.
\end{lemma}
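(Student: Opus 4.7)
The plan is to apply the crossed product framework just developed. The Morita equivalence class of $B$ is determined by the $\cO$-algebra isomorphism class of $fBf$, which is a crossed product of $fbf$ with $X = G/N$. By \thref{vanishing_cohomology}, $H^i(X, U(Z(fbf))) = 0$ for all $i \geq 1$, so weak isomorphism classes of such crossed products are in bijection with $\Out(fbf)$-conjugacy classes of homomorphisms $\alpha : X \to \Out(fbf)$, and we may take the associated $2$-cocycle to be trivial. Pre-composition with $\varphi \in \Aut(X)$ changes the grading of the crossed product but not its underlying algebra structure (a direct calculation with the defining relations, using that the pulled-back cocycle is again trivial), so $\cO$-algebra isomorphism classes of $fBf$ correspond to orbits of homomorphisms under the combined action of $\Out(fbf)$-conjugacy and $\Aut(X)$ by pre-composition.

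Now since $X$ has prime order $r$, any $\alpha$ is determined by the image of a generator, which has order $1$ or $r$ in $\Out(fbf)$. By the hypothesis that Sylow $r$-subgroups of $\Out(fbf)$ are cyclic of order $r$, every element of order $r$ generates such a Sylow subgroup, and all Sylow $r$-subgroups are $\Out(fbf)$-conjugate by Sylow's theorem. Moreover $\Aut(X) \cong (\mathbb{Z}/r)^\times$ acts by sending the image of a generator $g$ to $g^k$ for $k$ coprime to $r$, which permutes the non-identity elements of a fixed Sylow $r$-subgroup transitively since every non-identity element of a cyclic group of prime order is a generator. Thus the combined equivalence collapses all non-trivial $\alpha$ into a single class, yielding at most two $\cO$-algebra isomorphism classes for $fBf$ and hence at most two Morita equivalence classes for $B$.

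For the trivial-$\alpha$ case the crossed product is the tensor product $fbf \otimes_\cO \cO X$, which decomposes as a direct sum of two-sided ideals indexed by the primitive idempotents of $\cO X$, each canonically isomorphic as an $\cO$-algebra to $fbf$. The interior $D$-algebra structure is preserved through the tensor decomposition, so each corresponding block $B$ is source algebra equivalent to $b$. The main subtlety, which is precisely the phenomenon flagged in the paragraph preceding the statement, is the collapsing of the $r-1$ potentially distinct weak equivalence classes of non-trivial parameter sets into a single $\cO$-algebra isomorphism class via the $\Aut(X)$-action. It is this observation, relying essentially on $X$ having prime order, that keeps the count at two rather than at $r$.
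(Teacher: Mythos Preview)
Your overall approach matches the paper's: count crossed products via homomorphisms $X \to \Out(fbf)$, use the vanishing of $H^i(X,U(Z(fbf)))$, and collapse the non-trivial homomorphisms into a single algebra-isomorphism class via the $\Aut(X)$-action. Your counting argument (Sylow conjugacy plus the transitive $\Aut(X)$-action on non-identity elements of a fixed Sylow $r$-subgroup) is correct and in fact more explicit than what the paper writes.

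There is a genuine gap in your treatment of the trivial-$\alpha$ case, however. You argue that the crossed product is $fbf \otimes_\cO \cO X$, split it along the primitive idempotents of $\cO X$, and then claim that ``the interior $D$-algebra structure is preserved through the tensor decomposition'' to obtain a source algebra equivalence. But $f$ was chosen so that $fbf$ is a \emph{basic} algebra for $b$; it is not a source idempotent, and $fbf$ does not in general carry an interior $D$-algebra structure at all. Your tensor argument therefore yields Morita equivalence of $B$ with $b$, but not the stronger source algebra equivalence asserted in the lemma.

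The paper closes this gap differently: if the induced homomorphism $G/N \to \Out(fbf)$ is trivial, then $G$ acts on $b$ by inner automorphisms, so $G = G[b]$, and one then invokes \thref{innerautos}(i) to conclude that $B$ (a block of $\cO G[b]$ covering $b$) is source algebra equivalent to $b$. Replacing your tensor-decomposition paragraph with this observation repairs the argument.
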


\begin{proof}
The trivial homomorphism $G/N \rightarrow \Out(fbf)$ corresponds to the case that $B$ is source algebra equivalent to $b$ by \thref{innerautos}, since $G=G[b]$. Consider nontrivial $\alpha: G/N \rightarrow \Out(fbf)$, and consider a block $B$ such that $fBf$ is a crossed product of $fbf$ with $G/N$ corresponding to $\alpha$. Then for each $\varphi \in \Aut(G/N)$ we have that $\alpha \circ \varphi$ also realises $fBf$ as a crossed product. This accounts for all possible homomorphisms $\alpha: G/N \rightarrow \Out(fbf)$.
\end{proof}

\begin{example}
\thlabel{Puig_example}
As observed in~\cite{ar21}, there are examples of nilpotent blocks covering non-nilpotent blocks constructed in~\cite[Remark 4.4]{pu11} that will be useful in the arguments that follow. Let $P$ be an abelian $2$-group on which $C_r$ acts regularly for a prime $r$. Define $N=(P \rtimes C_r) \times C_r$ with Sylow $r$-subgroup $H$. Note that $N$ has $r$ $2$-blocks, corresponding to $\Irr(Z(N))$. There is a group $T \cong C_r$ acting on $N$ fixing the elements of $Z(N)$ and $H/Z(N)$. Define $G=N \rtimes T$. Then each nontrivial element of $\Irr(Z(N))$ is covered by a nilpotent block of $G$ (with defect group $P$). These cover block of $N$ Morita equivalent to $\cO (P \rtimes C_r)$. We in particular require the cases $N=G_n=(C_{2^n})^2 \rtimes C_3$ and $N=(C_{2^n})^3 \rtimes C_7$.
\end{example}

\begin{remark}
\thlabel{conj_perm}
In the crossed product construction, we are considering homomorphisms $G/N \rightarrow \Pic(b)$ induced by congugation by elements of $G$. Conjugation on $\cO N$ affords a permutation module, and so conjugation on an invariant summand (in our case $b$) affords a trivial source module. It follows that actually we have $G/N \rightarrow \cT(b)$. In general this is a less useful observation than we might hope, as $\cT(b)$ is not (known to be) preserved under Morita equivalence. However, it does allow us to keep some control over inertial quotients as we move from $b$ to $B$. We will use this observation in the following proofs.    
\end{remark}

We now apply the above to every extension of a Morita equivalence from a normal subgroup of odd prime index that we will need.

\begin{proposition}
\thlabel{Picard_application}
Let $G$ be a finite group and $N \lhd G$ with $[G:N]=r$, where $r$ is an odd prime. Let $B$ be a $2$-block of $\cO G$ covering a $G$-stable block of $b$ of $\cO N$.

\begin{enumerate}[(i)]
    \item\label{DC_7} Suppose that $b$ is Morita equivalent to $\cO(((C_{2^{n}})^3 \rtimes C_7) \times P)$ where $n \in \NN$ and $P$ is a cyclic $2$-group. Suppose $r=3$. Then $B$ is either source algebra equivalent to $b$ or Morita equivalent to $\cO(((C_{2^{n}})^3 \rtimes (C_7 \rtimes C_3)) \times P)$. Further, if $b$ is known to have inertial quotient $C_7$, then in the latter case $B$ has inertial quotient $C_7 \rtimes C_3$. Suppose $r=7$. Then $B$ is either source algebra Morita equivalent to $b$ or is nilpotent. If $r$ is an odd prime other than $3$ and $7$, then $B$ is source algebra equivalent to $b$.

    \item\label{DC_3} Suppose that $b$ is Morita equivalent to $\cO(G_m \times P)$, where $m \in \NN$ and $P$ is an abelian $2$-group with rank at most $2$. Let $D$ be the defect group of $b$. If $r \neq 3$, then $B$ is source algebra equivalent to $b$. If $r=3$ and $P \cong (C_{2^n})$ for some $n \in \NN$, then $B$ is either nilpotent, source algebra equivalent to $b$, or Morita equivalent to $\cO(G_m \times G_n)$ or a nonprincipal block of  $\cO(D \rtimes 3_+^{1+2})$. Further, if $b$ is known to have inertial quotient $(C_3)_1$, then in the latter two cases the inertial quotient of $B$ is $C_3 \times C_3$. If $r=3$ and $P$ is cyclic or a product of two cyclic groups of different orders, then $B$ is either nilpotent or source algebra equivalent to $b$. 
    \item\label{A_5xP} Suppose that $b$ is Morita equivalent to $B_0(\cO (A_5 \times P))$ where $P$ is an abelian $2$-group with rank at most $2$. If $P$ is cyclic or a product of two cyclic groups of different orders, then $B$ is source algebra equivalent to $b$. If $P \cong (C_{2^n})$ for some $n \in \NN$, then $B$ is either source algebra equivalent to $b$ or $B_0(\cO (A_5 \times G_n))$, with the latter case only occurring when $r=3$.
    \item\label{A_5xG_n} Suppose that $b$ is Morita equivalent to $B_0(\cO (A_5 \times G_n))$. Then $B$ is Morita equivalent to $B_0(\cO (A_5 \times (C_{2^{n}})^2))$ or source algebra equivalent to $b$, with the former case only occurring when $r=3$.

\end{enumerate}

\end{proposition}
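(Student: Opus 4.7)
The plan is to apply the crossed-product framework set up immediately before the proposition to each part in turn, with the Picard group computations of \thref{pic_list} as the key input. Since $G/N = C_r$ is cyclic of prime order coprime to $p = 2$, choosing $f \in b$ with $fbf$ a basic algebra for $b$ makes $fBf$ a crossed product of $fbf$ with $C_r$ that is Morita equivalent to $B$. By \thref{vanishing_cohomology} the cohomology groups $H^2$ and $H^3$ of $C_r$ with coefficients in $U(Z(fbf))$ vanish, so weak-isomorphism classes of such crossed products are classified by $\Pic(b) \cong \Out(fbf)$-conjugacy classes of homomorphisms $C_r \to \Pic(b)$, and two such are isomorphic as algebras precisely when they are related by $\Aut(C_r)$-precomposition. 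The Morita class of $B$ is therefore determined by the $\Pic(b)$-conjugacy class of the image subgroup of order dividing $r$ in $\Pic(b)$; the trivial image corresponds via \thref{innerautos} to $B$ being source algebra equivalent to $b$, while Remark \thref{conj_perm} forces nontrivial images to lie in $\cT(b)$, which lets us track how the inertial quotient changes.

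For parts (\ref{DC_7}), (\ref{A_5xP}) and (\ref{A_5xG_n}) the Picard groups $(C_7 \rtimes C_3) \times \Aut(P)$, $C_2 \times (P \rtimes \Aut(P))$ and $S_3 \times C_2$ all have very small odd Sylow subgroups, so there is at most one nontrivial $C_r$-conjugacy class for each relevant prime. In (\ref{DC_7}) the two nontrivial cases are realised, for $r = 3$, by $\cO(((C_{2^n})^3 \rtimes (C_7 \rtimes C_3)) \times P)$ (whose inertial quotient is read off to be $C_7 \rtimes C_3$ via Remark \thref{conj_perm}), and for $r = 7$ by a nilpotent block from Example \thref{Puig_example} applied to $(C_{2^n})^3 \rtimes C_7$; for all other $r$ the Sylow $r$-subgroup of $\Pic(b)$ is trivial. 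Part (\ref{A_5xP}) gives a nontrivial class only when $P$ has rank $2$ with both cyclic factors of the same order $2^n$ and $r = 3$, realised by $B_0(\cO(A_5 \times G_n))$; part (\ref{A_5xG_n}) gives one only for $r = 3$, realised by $B_0(\cO(A_5 \times (C_{2^n})^2))$. In these last two cases the matches are made by comparing the defect groups (which are preserved since $r$ is coprime to $p$) together with the inertial quotients computed via Remark \thref{conj_perm}, since the fusion data plus the shape of Table \ref{Inertial_quotient:table} single out the required Morita class.

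Part (\ref{DC_3}) is the hard case and the main obstacle. Here $\Pic(b) \cong S_3 \times (P \rtimes \Aut(P))$. If $P$ is cyclic or a product of two cyclic groups of distinct orders, $\Aut(P)$ is a $2$-group so the Sylow $3$-subgroup of $\Pic(b)$ is the unique $C_3 \leq S_3$; there is then at most one nontrivial class, which is identified with a nilpotent block via Example \thref{Puig_example}. When the rank-$2$ group $P \cong (C_{2^n})^2$ is homogeneous, however, $\Aut(P) = GL_2(\mathbb{Z}/2^n)$ contributes a further $C_3$, so the Sylow $3$-subgroup becomes $C_3 \times C_3$ with four subgroups of order $3$. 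The two ``diagonal'' ones are fused by conjugation with an inverting element existing in either normaliser (as $S_3$ on the first factor and as a Weyl-type subgroup of $GL_2(\mathbb{Z}/2^n)$ on the second), leaving exactly three nontrivial conjugacy classes. The matching to the three remaining listed Morita classes goes by the action on $D$ deduced from Remark \thref{conj_perm}: the $C_3 \leq S_3$ class acts trivially on the second $(C_{2^n})^2$ and gives the nilpotent case; the $C_3 \leq \Aut(P)$ class acts faithfully on that factor, giving the inertial quotient of $\cO(G_m \times G_n)$; and the diagonal class mixes the two actions, producing the inertial quotient $C_3 \times C_3$ with the nonabelian $3$-group structure realised by the nonprincipal block of $\cO(D \rtimes 3_+^{1+2})$. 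Verifying this last identification — matching the ``twisted'' crossed product to an explicit nonprincipal block of $\cO(D \rtimes 3_+^{1+2})$ — is the principal delicate step, and relies on checking that the cocycle data produced by the diagonal homomorphism agrees with the central extension defining $3_+^{1+2}$.
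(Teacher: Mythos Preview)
Your overall framework---crossed products of the basic algebra with $C_r$, classified via homomorphisms into $\Pic(b)$ using \thref{vanishing_cohomology} and the computations of \thref{pic_list}, with the trivial homomorphism handled by \thref{innerautos} and inertial quotients tracked through \thref{conj_perm}---is exactly the paper's approach, and parts (\ref{DC_7}), (\ref{A_5xP}), (\ref{A_5xG_n}) match the paper's argument closely (the paper packages the ``at most two classes'' step as \thref{Picard_argument_special_case}, which you essentially rederive).

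The one place you diverge is the homocyclic case of part (\ref{DC_3}), and here the paper's route is simpler than yours. You propose to identify each of the three nontrivial conjugacy classes of order-$3$ subgroups of $\Pic(b)$ with a specific block by analysing the induced action on $D$, and you flag the match between the diagonal class and the nonprincipal block of $\cO(D\rtimes 3_+^{1+2})$ as ``the principal delicate step'', to be checked by comparing cocycle data. The paper avoids this entirely: having counted that there are at most four Morita classes for $B$, it simply \emph{exhibits} four pairwise distinct realisations. Three are immediate (the trivial homomorphism, the group $G_m\times G_n$, and the nilpotent block from \thref{Puig_example}); for the fourth, the paper observes that $D\rtimes 3_+^{1+2}$ contains a normal subgroup $M$ of index $3$ with $M\cong (G_m\times (C_{2^n})^2)\times C_3$, so a nonprincipal block of $\cO(D\rtimes 3_+^{1+2})$ covers a block of $\cO M$ Morita equivalent to $\cO(G_m\times (C_{2^n})^2)$. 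No cocycle computation or explicit matching is needed. Your proposed identification can be made to work, but it is more labour than the counting-plus-exhibition argument, and the part you single out as delicate is in fact unnecessary.

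A minor point: in parts (\ref{A_5xP}) and (\ref{A_5xG_n}) you say the nontrivial class is pinned down because ``the fusion data plus the shape of Table~\ref{Inertial_quotient:table} single out the required Morita class''. Fusion alone does not determine Morita class in general; what actually pins it down is that there is \emph{exactly one} nontrivial class (by the Sylow-$r$ structure of $\Pic(b)$), and the named block visibly arises as an index-$3$ extension (e.g.\ $A_5\times G_n$ over $A_5\times (C_{2^n})^2$, or $A_5$ times a group from \thref{Puig_example}).
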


\begin{proof}
 (\ref{DC_7}) By \thref{pic_list} $\Pic(b) \cong (C_7 \rtimes C_3) \times (C_{2^n} \rtimes \Aut(C_{2^n}))$. By \thref{Picard_argument_special_case}, for both $r=3$ and $r=7$ there are two possibilities for the Morita equivalence class of $B$, and one is that $B$ is source algebra equivalent to $b$. For $r=7$ the second is that $B$ is nilpotent as realised in \thref{Puig_example}. For $r=3$, the second case is realised by the group given in the statement. For all other odd primes, since they do not divide the order of the Picard group, $B$ is source algebra equivalent to $b$. It remains to prove the statement regarding inertial quotients. Suppose that $b$ is known to have inertial quotient $C_7$ and that $r=3$. It follows from \thref{inertial_quotient_normal} that $B$ has inertial quotient $C_7$ or $C_7 \rtimes C_3$. By \thref{conj_perm} we observe that we may consider elements of $\cT(b)$. By the description of $\cT(b)$ in Section \ref{Picard} we know that $\cT(b)$ maps to a subgroup of $\Out(D,\cF) \cong C_3$. If this subgroup is trivial, then we may not construct any nontrivial homomorphism $G/N \rightarrow \cT(b)$ and we are done. Hence suppose the subgroup has order three. Then the action of $G$ induces an element of order three in $\Aut(D,\cF)$, so that $B$ must have inertial quotient $C_7 \rtimes C_3$.

 (\ref{DC_3}) By \thref{pic_list} $\Pic(b) \cong S_3 \times (P \rtimes \Aut(P))$. If $P$ is cyclic or a product of two cyclic groups of different orders, then $\Aut(P)$ is a $2$-group, and so by \thref{Picard_argument_special_case} $B$ is either Morita equivalent to $b$ or is nilpotent, with this case realised as in \thref{Puig_example}. Suppose that $P \cong (C_{2^n})$ for some $n \in \NN$. Then $\Pic(b) \cong S_3 \times (P \rtimes S_3)$. There are four conjugacy classes of homomorphisms $G/N \rightarrow \Pic(b)$, and so at most four possibilities for the Morita equivalence class of $B$. These are account for by the four cases listed in the statement, with the cases that $B$ is Morita equivalent to $b$ or $\cO(G_m \times G_n)$ requiring no further explanation. The case that $B$ is nilpotent is again realised as in \thref{Puig_example}. Note that $D \rtimes 3_+^{1+2}$ has a normal subgroup $M$ of index $3$ isomorphic to $N \times C_3$. Now $B$ covers a nonprincipal block of $\cO M$, which is Morita equivalent to $\cO N$. Hence the final case is realised. It remains to prove the statement regarding inertial quotients. Suppose that $b$ is known to have inertial quotient $(C_3)_1$. It follows from \thref{inertial_quotient_normal} that $B$ has inertial quotient $(C_3)_1$ or $C_3 \times C_3$. as above, by \thref{conj_perm} we observe that we may consider elements of $\cT(b)$. By the description of $\cT(b)$ in Section \ref{Picard} we know that $\cT(b)$ maps to a subgroup of $\Out(D,\cF) \cong C_3$. If this subgroup is trivial, then we may not construct any nontrivial homomorphism $G/N \rightarrow \cT(b)$ and we are done. Hence suppose the subgroup has order three. Then the action of $G$ induces an element of order three in $\Aut(D,\cF)$, so that $B$ must have inertial quotient $C_3 \rtimes C_3$.
 
(\ref{A_5xP}) By \thref{pic_list} $\Pic(b) \cong C_2 \times (P \rtimes \Aut(P))$. If $P$ is cyclic or a product of two cyclic groups of different orders, then $\Pic(b)$ is a $2$-group, and so $B$ is Morita equivalent to $b$. Suppose that $P \cong (C_{2^n})$ for some $n \in \NN$. Then $\Pic(b) \cong C_2 \times (P \rtimes S_3)$. Hence by \thref{Picard_argument_special_case} the result follows.

(\ref{A_5xG_n}) By \thref{pic_list} $\Pic(b) \cong C_2 \times S_3$. By \thref{Picard_argument_special_case} there are two possibilities for the Morita equivalence class of $B$, one of which is the class containing $b$. The case $B_0(\cO (A_5 \times (C_{2^{n}})^2))$ is realised by taking a product of $A_5$ with a group as in \thref{Puig_example}.

\end{proof}

\begin{remark}
Since (a) source algebra equivalence preserves the inertial quotient, and (b) for abelian defect groups a block is nilpotent if and only if the inertial quotient is trivial, we have shown that if the inertial quotient of $b$ is isomorphic to that of the given Morita equivalence class representative, then $B$ also has inertial quotient isomorphic to that of the given Morita equivalence class representative.
\end{remark}

Putting the reduction techniques of the section together, we have the following that is in a form directly applicable to the proof of \thref{main_theorem}. Note that we do not need to consider all forms for $b$ here: only those that will arise later.

\begin{proposition}
\thlabel{covers_inertial}
Let $G$ be a finite group and $N \lhd G$ with $G/N$ solvable. Let $B$ be a quasiprimitive $2$-block of $\cO G$ with abelian defect group $D$ of rank at most $4$. Suppose that $B$ covers a block $b$ of $\cO N$.

\begin{enumerate}[(i)]
    \item\label{C7} If $b$ is inertial with inertial quotient $C_7$, then $B$ is Morita equivalent to $\cO D$, $\cO (D \rtimes C_7)$, $\cO (D \rtimes (C_7 \rtimes C_3))$ or $\cO (G_n \times P)$ for some $n$ and some abelian $2$-group $P$, and $B$ has inertial quotient $1$, $C_7$, $C_7 \rtimes C_3$ or $(C_3)_1$ respectively.

    \item\label{C3xC3} If $b$ is inertial with inertial quotient $C_3 \times C_3$, then $B$ is inertial with inertial quotient $1$, $(C_3)_1$ or $C_3 \times C_3$.

    \item\label{C3_2C5C15} If $b$ is inertial with inertial quotient $(C_3)_2$, $C_5$ or $C_{15}$, then $B$ is inertial with inertial quotient $1$, $(C_3)_2$, $C_5$ or $C_{15}$.
    
    \item\label{C3_1} If $b$ is Morita equivalent to a block with normal defect group and has inertial quotient $(C_3)_1$, then $B$ is Morita equivalent to one of:
    \begin{enumerate}[(a)]
    \item $\cO D$;
    \item $\cO (G_{n} \times P)$, where $D \cong (C_{2^{n}})^2 \times P$, and $B$ has inertial quotient $(C_3)_1$;
    \item $\cO (G_{n_1} \times G_{n_2})$, where $D \cong (C_{2^{n_1}})^2 \times (C_{2^{n_2}})^2$, and $B$ has inertial quotient $C_3 \times C_3$;
    \item a non-principal block of $\cO(D \rtimes 3_+^{1+2})$ and $B$ has inertial quotient $C_3 \times C_3$;
    \end{enumerate}
    \item\label{A5factor} If $b$ is Morita equivalent to $B_0(\cO (A_5 \times Q))$ for some $Q \leq D$ and has inertial quotient of type $(C_3)_1$, then $B$ is Morita equivalent to one of:
    \begin{enumerate}[(a)]
     \item $B_0(\cO (A_5 \times P))$, where $D \cong (C_2)^2 \times P$, and $B$ has inertial quotient $(C_3)_1$;
    \item $B_0(\cO (A_5 \times G_n))$, where $D \cong (C_2)^2 \times (C_{2^{n}})^2$, and $B$ has inertial quotient $C_3 \times C_3$.
    \end{enumerate}
    \item\label{C3xC3incA5} If $b$ is Morita equivalent to $B_0(\cO (A_5 \times G_n))$ for some $n \geq 1$, then $B$ is Morita equivalent to $b$ or $B_0(\cO (A_5 \times (C_{2^{n}})^2))$, where $D \cong (C_2)^2 \times (C_{2^{n}})^2$, and $B$ has inertial quotient $C_3 \times C_3$ or $(C_3)_1$ respectively.
     
\end{enumerate}

\end{proposition}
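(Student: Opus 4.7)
The plan is to take a subnormal series $N = N_0 \lhd N_1 \lhd \cdots \lhd N_m = G$ with each $N_{i+1}/N_i$ cyclic of prime order (possible since $G/N$ is solvable) and to propagate the possible Morita class of the block $b_i$ of $\cO N_i$ covered by $B$ up the series. Quasiprimitivity of $B$ ensures that $b_i$ is unique, $N_{i+1}$-stable, and itself quasiprimitive at every level. By \thref{solvablequotient:lem}, $DN/N$ is a Sylow $2$-subgroup of $G/N$, so we may order the series so that all $2$-index steps occur below $DN$ and only odd-prime steps occur above it.

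For the $2$-index steps from $N$ up to $DN$, I would iterate \thref{WZZextendp} in the inertial cases (parts (i)--(iv)), applied directly when the inertial quotient $E$ is cyclic and factorwise when $E$ decomposes as a product, in each case checking that the Singer-type action of $E$ on $[N_G(D,B_D),D]$ is free on non-identity elements. For cases (v) and (vi), the appropriate $2$-lift is supplied instead by \thref{index2theorem} of Section \ref{index2compare}, which was designed precisely to extend a block Morita equivalent to $B_0(\cO(Q \times A_5))$ (or $\cO(Q \times A_4)$) across a normal subgroup of index $2$. After this $2$-part is processed, the block $b_j$ of $\cO(DN)$ covered by $B$ has defect group $D$ and lies in the Morita class naturally extending that of $b$.

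For the odd-prime steps from $DN$ up to $G$, I would iterate the relevant case of Proposition \ref{Picard_application} at each prime-index step. Each application yields a short list of possible Morita classes for $b_{i+1}$ given $b_i$, and the lists appearing in Proposition \ref{covers_inertial} are the transitive closures under these iterations. For the inertial subcases this closure is clean via \thref{inertialp'extend}, and the ``maximal'' inertial quotients listed (such as $C_7 \rtimes C_3$, $C_3 \times C_3$ and $C_{15}$) are maximal within the subgroups of $\Aut(D)$ compatible with the rank-$4$ restriction of Table \ref{Inertial_quotient:table}, so further odd-prime extensions cannot enlarge them.

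The main obstacle will be handling intermediate transitions through the nilpotent class. For example, in part (i) an $r = 7$ step starting from $\cO(((C_{2^n})^3 \rtimes C_7) \times P)$ can collapse the inertial quotient $C_7$ to trivial, placing $b_i$ in the Morita class $\cO D$; a subsequent $r = 3$ extension of this nilpotent block is then governed by $\Pic(\cO D) \cong D \rtimes \Aut(D)$ rather than by the much smaller Picard group of the original basic algebra, and can produce a cover with inertial quotient $(C_3)_1$, yielding the class $\cO(G_n \times P)$ in the conclusion. Completing the proof amounts to enumerating all such intermediate transitions case by case and verifying, using the Picard group computations of \thref{pic_list} together with the rank restrictions of Table \ref{Inertial_quotient:table}, that the lists stated in Proposition \ref{covers_inertial} exhaust all possibilities that can arise.
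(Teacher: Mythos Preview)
Your overall strategy---climb a normal series from $N$ to $G$, handling $2$-index layers with \thref{WZZextendp} or \thref{index2theorem} and odd-index layers with the Picard/crossed-product machinery---is the right one, and is essentially what the paper does. But two things in your execution do not work as stated.

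First, the ordering claim is false. You assert that the series can be chosen with all $2$-index steps below $DN$ and all odd steps above. But $DN$ need not be normal in $G$: all that \thref{solvablequotient:lem} gives is that $DN/N$ is an \emph{abelian} Sylow $2$-subgroup of $G/N$, hence that $G/N$ has $2$-length at most one. The correct normal series is therefore of the shape $N = N_0 \lhd N_1 \lhd N_2 \lhd N_3$ with $N_1/N_0$ and $N_3/N_2$ of odd order and $N_2/N_1$ a $2$-group (so $N_2 = N_1 D$); you cannot in general put the $2$-layer at the bottom. Relatedly, for a general subnormal (not normal-in-$G$) refinement the intermediate $N_i$ are not normal in $G$, so quasiprimitivity of $B$ says nothing about the $b_i$; your claim that each $b_i$ is ``unique, $N_{i+1}$-stable, and itself quasiprimitive'' is not justified.

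Second, once the series has the odd--$2$--odd shape, iterating \thref{Picard_application} through every odd prime step is both awkward and unnecessary. The paper instead uses the $G[b]$ mechanism of \thref{innerautos}: the bottom odd layer $N \to N_1$ is absorbed for free (since $N_1 \leq G[b]$, the block $b_1$ is source algebra equivalent to $b$, preserving the inertial quotient and inertiality); then a single application of \thref{WZZextendp} or \thref{index2theorem} handles the $2$-layer $N_1 \to N_2$; another application of \thref{innerautos} passes from $N_2$ to $G[b_2]$; and only the small quotient $G/G[b_2] \hookrightarrow \Pic(b_2)$ remains, to which \thref{Picard_application} is applied once or twice. This also gives the clean observation you are missing in parts (ii) and (iii): when the inertial quotient is $(C_3)_2$, $C_5$, $C_{15}$ or $C_3\times C_3$ one has $C_D(E)=1$, so $D = [D,E] \leq N_1$ by \thref{defect_group_factor:prop}; thus $N_1 = N_2$, there is no $2$-layer at all, and \thref{inertialp'extend} finishes immediately. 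Your plan to apply \thref{WZZextendp} ``factorwise'' to non-cyclic $E$ is not needed (and \thref{WZZextendp} as stated requires $E$ cyclic).
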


\begin{proof}
Since $b$ is $G$-stable and $G/N$ is solvable, it follows from Lemma \ref{solvablequotient:lem} that $DN/N$ is an abelian Sylow $2$-subgroup of $G/N$, which must then have $2$-length at most one. By Proposition \ref{Dabelian_inner:prop} $D \leq G[b]$, which must also have $2$-length at most one. Hence there are normal subgroups $N_i$ of $G$ such that $N=N_0 \lhd N_1 \lhd N_2 \lhd N_3=G[b]$ with $N_1/N_0$, $N_3/N_2$ of odd order and $N_2/N_1$ a $2$-group, so in particular $N_2 = N_1D$. Write $b_{i}$ for the unique block of $\cO N_i$ covered by $B$. Let $(D,(b_2)_D)$ be a $b_2$-subpair. By \thref{defect_group_factor:prop} $D \cong [D,N_{N_2}(D,(b_2)_D)] \times C_D(N_{N_2}(D,(b_2)_D))$ with $[D,N_{N_2}(D,(b_2)_D)] \leq N_1$ (and so $[D,N_{N_2}(D,(b_2)_D)] \leq N$).

By \thref{innerautos} $b$ is source algebra equivalent to $b_1$, with the same inertial quotient as $b$. If furthermore $b$ is inertial, then $b_1$ is also inertial. 

By \thref{index_p_inertial:prop} $b_2$ has inertial quotient isomorphic to that of $b$, with the same action (this last statement uses the fact that in this case there is a unique action given the isomorphism type of the inertial quotient and the order of commutator part $[D,N_{N_2}(D,(b_2)_D)]$ of $D$).

(\ref{C7}) Suppose that $b$, and so $b_2$, has inertial quotient $C_7$ and that $b$ is inertial. By \thref{WZZextendp} $b_2$ is Morita equivalent (but not necessarily basic Morita equivalent) to $\cO (D \rtimes C_7)$. Note that $C_D(N_{N_2}(D,(b_2)_D))$ is cyclic. By \thref{pic_list} $\Pic(b_2) \cong (C_7 \rtimes C_3) \times (C_D(N_{N_2}(D,(b_2)_D)) \rtimes \Aut(C_D(N_{N_2}(D,(b_2)_D)))$. Now consider $G[b_2]$. Again applying \thref{innerautos}, $b_2$ is source algebra equivalent to the unique block $c$ of $G[b_2]$ covered by $B$. Since $G/G[b_2]$ is an odd order subgroup of $\Pic(b_2)$, it is isomorphic to a subgroup of $C_7 \rtimes C_3$. Suppose that $G/G[b_2] \cong C_7$. By \thref{Picard_application} either $B$ is nilpotent or it is Morita equivalent to $c$ with inertial quotient $C_7$. If $G/G[b_2] \cong C_3$, then again by \thref{Picard_application} $B$ is Morita equivalent to $c$ or $\cO (D \rtimes (C_7 \rtimes C_3))$, with the appropriate inertial quotient. Suppose $G/G[b_2] \cong C_7 \rtimes C_3$. Write $G_1$ for the preimage of $O_7(G/G[b_2])$ in $G$ and $B_1$ for the unique block of $G_1$ covered by $B$. Then by \thref{Picard_application} $B_1$ is either nilpotent or Morita equivalent to $c$. Applying \thref{Picard_application} again, in the first case $B$ is either nilpotent or Morita equivalent to a block with normal defect group and inertial quotient $(C_3)_1$. In the second, $B$ is either Morita equivalent to $c$ or to $\cO (D \rtimes (C_7 \rtimes C_3))$. In each of these cases by \thref{Picard_application} the inertial quotient is as stated.

(\ref{C3xC3}) and (\ref{C3_2C5C15}) Suppose that $b$, and so $b_2$, has inertial quotient $(C_3)_2$, $C_5$, $C_{15}$ or $C_3 \times C_3$, and that $b$ is inertial. We have $C_D(N_{N_2}(D,(b_2)_D))=1$, so $D = [D,N_{N_2}(D,(b_2)_D)] \leq N_1$, i.e., $N_1=N_2$ and $G/N$ has odd order. The result then follows from \thref{inertialp'extend}.

(\ref{C3_1}) Suppose that $b$, and so $b_1$, $b_2$, has inertial quotient $(C_3)_1$. Hence $b_1$ is Morita equivalent to $\cO (G_{n} \times Q)$ for some $n$ and $Q \leq D$. Recalling that $[D,N_{N_2}(D,(b_2)_D)] \leq N_1$, by \thref{index2theorem} $b_2$ is Morita equivalent to $\cO (G_{n} \times Q)$, where $D \cong (C_{2^{n}})^2 \times P$. Applying \thref{innerautos} $b_2$ is source algebra equivalent to the unique block $c$ of $G[b_2]$ covered by $B$. By \thref{pic_list}  $\Pic(b_2) \cong S_3 \times (P \rtimes \Aut(P))$. Suppose that $P$ is not homocyclic. Since $G/G[b_2]$ is an odd order subgroup of $\Pic(b_2)$ and $|\Pic(b_2)|_{2'}=3$, it is isomorphic to a subgroup of $C_3$. By \thref{Picard_application} $B$ is either Morita equivalent to $c$ or is nilpotent. Suppose that $P$ is homocyclic. Then $\Pic(b_2) \cong S_3 \times (P \rtimes S_3)$ and $G/G[b_2]$ is isomorphic to a subgroup of $C_3 \times C_3$. Let $H$ be the kernel of the action of $G$ on the irreducible Brauer characters of $c$ and let $B_H$ be the unique block of $\cO H$ covered by $B$. By \thref{pic_list} $[G:H]$ divides $3$. Suppose that $[G:G[b_2]]=3$. By \thref{Picard_application} the possible Morita equivalence classes for $B$ are represented by the four blocks listed. Suppose that $[G:G[b_2]]=9$. If $G=G[c]$, then by \thref{innerautos} $B$ is Morita equivalent to $c$ and we are done. If $[G:G[c]]=3$, then $B_1$ is Morita equivalent to $c$, and again there are four possibilities for the Morita equivalence class of $B$ and these are as listed. Hence suppose that $G[c]=G[b_2]$, so that $G[b_2]<H<G$. The subgroup of $Pic(c)$ of self-equivalences that preserve all irreducible Brauer characters is isomorphic to $S_3$. Hence there is one possibility for the Morita equivalence class for $B_H$ (as we are excluding the case that $H$ acts as inner automorphisms), that $B_1$ is Morita equivalent to $\cO (G_{n_1} \times G_{n_2})$. One may check as in (\ref{C7}) that the inertial quotients are as stated. 

(\ref{A5factor}) Suppose that $b$, and so $b_1$, $b_2$, is Morita equivalent to $B_0(\cO (A_5 \times Q))$ for some $Q \leq D$. By \thref{index2theorem} $b_2$ is Morita equivalent to $B_0(\cO (A_5 \times P))$, where $D \cong (C_2)^2 \times P$. By \thref{innerautos} $b_2$ is source algebra equivalent to the unique block of $G[b_2]$ covered by $B$. By \thref{pic_list}  $\Pic(b_2) \cong C_2 \times (P \rtimes \Aut(P))$. Since $G/G[b_2]$ is an odd order subgroup of $\Pic(b_2)$ and $P$ has rank at most $2$, it is isomorphic to a subgroup of $C_3$. If $P$ is not homocyclic, then $G=G[b_2]$ and we are done. Suppose $P$ is homocyclic, i.e., $P \cong (C_{2^n})^2$ for some $n$. We may suppose $[G:G[b_2]]=3$. The result then follows by \thref{Picard_application}.
 
(\ref{C3xC3incA5}) Suppose that $b$ is Morita equivalent to $B_0(\cO (A_5 \times G_n))$ for some $n \geq 1$. As in (\ref{C3xC3}) $G/N$ has odd order, so $N_1=N_2$ and $b$ is source algebra equivalent to $b_3$. By \thref{pic_list} $\Pic(b) \cong C_2 \times S_3$. Since $G/G[b]$ is an odd order subgroup of $\Pic(b)$, it is isomorphic to a subgroup of $C_3$. If $G[b]=G$, then $b$ is source algebra equivalent to $B$ and we are done. If $[G:G[b]]=3$, then we are done by \thref{Picard_application}.

\end{proof}

\begin{remark}
We have not treated the case that $b$ is Morita equivalent to $B_0(\cO (A_5 \times A_5)$ here since we do not at present know the Picard group of this block. This case will be treated in the main part of the reduction, where we will make use of additional hypotheses.
\end{remark}

Finally, the methods above may also be used to prove the following.

\begin{proposition}
\thlabel{faithfulblocks}
Let $D \cong (C_{2^{n_1}})^2 \times (C_{2^{n_2}})^2$ for some $n_1, n_2 \in \NN$ and consider $G=D \rtimes 3_+^{1+2}$, where the centre of $3_+^{1+2}$ acts trivially. The $2$-blocks of $\cO G$ correspond to the simple modules of $Z(3_+^{1+2})$, and the two non-principal blocks are Morita equivalent. Further, these blocks are Morita equivalent to the two non-principal blocks of $\cO (D \rtimes 3_-^{1+2})$.
\end{proposition}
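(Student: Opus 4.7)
The plan is to decompose $\cO G$ via the central $2'$-subgroup $Z := Z(3_+^{1+2})$, which lies in $Z(G)$ since $Z$ acts trivially on $D$. As $|Z|=3$ is coprime to $p=2$, the primitive idempotents $e_\chi$ of $\cO Z$ are central in $\cO G$ and partition it into three blocks indexed by $\Irr(Z)$: the principal block from the trivial character of $Z$, and two non-principal blocks $B_\chi$, $B_{\bar\chi}$ from the faithful characters $\chi,\bar\chi$ of $Z$.

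To show $B_\chi$ and $B_{\bar\chi}$ are Morita equivalent, I would construct an automorphism of $G$ inverting $Z$. Writing $Q := 3_+^{1+2}/Z \cong C_3 \times C_3 = Q_1 \times Q_2$ acting faithfully on $D = D_1 \times D_2$ with $D_i \cong (C_{2^{n_i}})^2$, there is $\tau \in \Aut(3_+^{1+2})$ that inverts $Z$ and descends on $Q$ to the map inverting $Q_1$ and fixing $Q_2$, an element of $\Aut(Q) \cong GL_2(\mathbb{F}_3)$ of determinant $-1$. A compatible $\sigma = (\sigma_1,\mathrm{id}) \in \Aut(D)$ is supplied by an order-two element $\sigma_1 \in GL_2(\ZZ/2^{n_1})$ that inverts the image of $Q_1$; such an element exists because the $S_3$-subgroup of $GL_2(\mathbb{F}_2)$ normalising the $C_3$ lifts to $GL_2(\ZZ/2^{n_1})$. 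The pair $(\sigma,\tau)$ extends to an automorphism of $G$ whose induced algebra automorphism of $\cO G$ exchanges $B_\chi$ and $B_{\bar\chi}$.

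For the comparison between the non-principal blocks of $G_\epsilon := D \rtimes 3_\epsilon^{1+2}$ for $\epsilon \in \{+,-\}$, I would apply the crossed product machinery of Section~\ref{extensions} to $N := DZ \lhd G_\epsilon$, a normal subgroup of $2'$-index $9$. Every block of $\cO N$ is $G_\epsilon$-stable with basic algebra $\cO D$, so the unique block $B_\chi^\epsilon$ covering $b_\chi := e_\chi \cO N \cong \cO D$ is a crossed product of $\cO D$ with $Q$, classified up to weak equivalence by the common homomorphism $Q \to \Pic(\cO D)$ recording the action on $D$, together with a class in $H^2(Q, U(Z(\cO D)))$. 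Since $D$ is a $2$-group and $Q$ a $2'$-group, this cohomology reduces to $H^2(Q,\cO^\times) \cong C_3$, and for $B_\chi^\epsilon$ the class is the $\chi$-pushforward of the extension class of $Z \to 3_\epsilon^{1+2} \to Q$; both pushforwards are non-trivial since $e_\chi \cO 3_\epsilon^{1+2} \cong M_3(\cO)$ in each case. The two non-trivial elements of $H^2(Q,\cO^\times) \cong C_3$ form a single orbit under $\Aut(Q) = GL_2(\mathbb{F}_3)$ (whose action factors through $\det$), so the compatibility argument used above for the $Z$-inverting automorphism supplies a pair $(\sigma,\tau)$ intertwining the two crossed product data, yielding the required Morita equivalence between $B_\chi^+$ and $B_\chi^-$ (and hence between all four non-principal blocks).

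The main obstacle I expect is the cohomological bookkeeping: pinning down that the classifying $2$-cocycle really sits in $H^2(Q,\cO^\times) \cong C_3$ (which requires exploiting the $2$-group/$2'$-group interaction to kill the pro-$2$ part of $\cO D^\times$ in the cohomology) and then verifying that $\Aut(Q)$ transitively permutes the two non-trivial classes, so that the compatibility pair constructed for the $Z$-inverting automorphism can be re-used to match the blocks of $G_+$ and $G_-$.
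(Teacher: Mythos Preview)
Your approach is sound and takes a genuinely different route from the paper. The paper passes instead to an index-$3$ subgroup $N\lhd G_\epsilon$ (so $N\cong (D\rtimes C_3)\times C_3$ or $D\rtimes C_9$), observes that the covered block $b$ is Morita equivalent to $\cO(G_{n_1}\times (C_{2^{n_2}})^2)$, and then invokes \thref{Picard_application}(\ref{DC_3}): for $[G:N]=3$ there are at most four Morita classes for a block covering such a $b$, represented by $\cO D$, $b$ itself, $\cO(G_{n_1}\times G_{n_2})$, and a nonprincipal block of $D\rtimes 3_+^{1+2}$. Since every nonprincipal block $B$ of $G_\epsilon$ has $l(B)=1$ and inertial quotient $C_3\times C_3$, only the last class survives; as this argument applies uniformly to all four nonprincipal blocks of $G_+$ and $G_-$, they all lie in a single Morita class.

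The paper's route is shorter because it recycles the cyclic-quotient crossed-product machinery already established in \thref{Picard_argument_special_case} and \thref{Picard_application}, and pins down the class via the invariants $l(B)$ and the inertial quotient rather than by comparing cocycles. Your route is more self-contained and constructive---you actually exhibit algebra isomorphisms---but at the cost of handling the non-cyclic quotient $Q\cong C_3\times C_3$ directly, which lies outside the scope of \thref{vanishing_cohomology}. The obstacle you flag is benign: since $\cO D$ is commutative local with residue field $k$, one has $U(\cO D)\cong k^\times\times(1+J(\cO D))$ with the second factor a pro-$2$ abelian group on which the $3$-group $Q$ has vanishing higher cohomology, whence $H^2(Q,U(\cO D))\cong H^2(Q,k^\times)\cong C_3$; and $\Aut(Q)=\GL_2(\mathbb{F}_3)$ acts on this copy of $C_3$ through the determinant, so any element of determinant $-1$ swaps the two nontrivial classes, while a compatible $\sigma\in\Aut(D)$ realising such an element exists exactly as in your first paragraph.
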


\begin{proof}
Let $B$ be any faithful $2$-block of $G=D \rtimes 3_+^{1+2}$ or $D \rtimes 3_-^{1+2}$. Then $l(B)=1$. Take a maximal subgroup $N$ of $G$ and a block $b$ of $N$ covered by $B$. Then $N \cong (D \rtimes C_3) \times C_3$ or $D \rtimes C_9$ and without loss of generality $b$ is Morita equivalent to $\cO (G_{n_1} \times (C_{2^{n_2}})^2)$ for some $n_1, n_2$. By \thref{Picard_application}(\ref{DC_3}) there is only one possibility for the Morita equivalence class of $B$ under the restriction that there is just one simple module.
\end{proof}


\section{Proof of the main theorem}
\label{main_proof}

We first recall the case where the defect group is normal.

\begin{lemma}
\thlabel{normaldefect}
Let $B$ be a block of $\cO G$ for a finite group $G$ with abelian normal defect group $D$ with rank at most $4$ and inertial quotient $E$. Then $B$ is source algebra equivalent to a block of $D \rtimes \hat{E}$, where $E$ is the inertial quotient of $B$ and $\hat{E}/Z(\hat{E})=E$. If $E \cong C_3 \times C_3$, then $\hat{E} \cong E$ or $3^{1+2}$ with $Z(3^{1+2})$ acting trivially on $D$. Otherwise $\hat{E}=E$.
\end{lemma}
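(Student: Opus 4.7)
The plan is to apply the standard structure theorem (due to K\"ulshammer and Puig) for source algebras of blocks with normal abelian defect group, and then combine it with a Schur-multiplier computation for each possible inertial quotient. Recall that when $D \lhd G$ is an abelian normal defect group of $B$, the source algebra of $B$ is isomorphic as an interior $D$-algebra to a twisted group algebra of the form $\cO D \otimes_\cO \cO_\alpha E$ for some cocycle $\alpha \in H^2(E,\cO^\times)$, where $E$ is the inertial quotient acting on $D$ in the prescribed way. Standard arguments then identify $\cO_\alpha E$ with the block of $\cO\hat E$ covering a faithful character of $Z$, where $1\to Z\to\hat E\to E\to 1$ is the central extension of $E$ classified by $\alpha$; the corresponding block of $\cO(D\rtimes\hat E)$, with $Z(\hat E)$ acting trivially on $D$, is then source algebra equivalent to $B$.

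Thus the task reduces to computing $H^2(E,\cO^\times)$ (equivalently the Schur multiplier of $E$, since $\cO^\times$ has enough roots of unity) for each possibility for $E$ listed in Table~\ref{Inertial_quotient:table}. The cases $E=1$, $C_3$, $C_5$, $C_7$, $C_{15}$ are cyclic and so have trivial Schur multiplier; hence $\alpha$ is cohomologically trivial and $\hat E=E$. For $E=C_7\rtimes C_3$, the Schur multiplier of this Frobenius group of order $21$ is also trivial (a quick computation, or a standard reference on Schur multipliers of small solvable groups), so again $\hat E=E$.

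The only case producing a nontrivial twist is $E\cong C_3\times C_3$, whose Schur multiplier is $C_3$. Here $\alpha$ is either trivial, in which case $\hat E=C_3\times C_3$, or represents the unique nontrivial class up to inversion, in which case $\hat E$ is a central extension of $C_3\times C_3$ by $C_3$ realising this cocycle; the only such non-split central extensions are the two extraspecial groups $3_+^{1+2}$ and $3_-^{1+2}$. In either realisation the image in $E$ of $Z(\hat E)$ is trivial by construction, so $Z(\hat E)$ acts trivially on $D$, as required.

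The main point to check carefully is the passage from the abstract cocycle $\alpha$ to an honest central extension $\hat E$ with the prescribed action on $D$, making sure that the source algebra identification is preserved (as opposed to only a Morita equivalence). Once the K\"ulshammer--Puig source algebra theorem is in hand, however, this is routine, and the Schur-multiplier calculations are immediate for the very short list of groups in Table~\ref{Inertial_quotient:table}; the genuine subtlety lies only in the $C_3\times C_3$ case, where both $3_+^{1+2}$ and $3_-^{1+2}$ have to be allowed (the precise one arising in any given block will not matter for the applications, and will be identified up to Morita equivalence in \thref{faithfulblocks}).
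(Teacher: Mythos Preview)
Your proposal is correct and follows essentially the same approach as the paper: invoke the K\"ulshammer--Puig source algebra theorem for blocks with normal defect group (the paper cites \cite[Theorem 6.14.1]{lin2}) and then compute the Schur multiplier of each possible inertial quotient. The only cosmetic difference is that the paper dispatches all cases except $C_3\times C_3$ in one stroke by observing that those $E$ have all Sylow subgroups cyclic and hence trivial Schur multiplier, whereas you treat $C_7\rtimes C_3$ separately.
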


\begin{proof}
See~\cite[Theorem 6.14.1]{lin2}. In all cases except $E \cong C_3 \times C_3$, all Sylow subgroups of $E$ are cyclic, so the Schur multiplier is trivial. For $C_3 \times C_3$ the Schur multiplier is $3$ and the result follows.
\end{proof}

We now state a result used in previous reductions for results concerning Morita equivalence classes of blocks, that encapsulates the use of Fong-Reynolds reductions and~\cite{kp90}. It appears in~\cite{ae23} in full, but was extracted from the first part of the proof of~\cite[Proposition 4.3]{eel20}.

\begin{lemma}
\thlabel{reduced_block}
Let $G$ be a finite group and $B$ a block of $\cO G$ with defect group $D$. Then there is a finite group $H$ and a block $C$ of $\cO H$ such that $B$ is basic Morita equivalent to $C$, a defect group $D_H$ of $C$ is isomorphic to $D$ such that:
\begin{enumerate} 
\item[(R1)] $C$ is quasiprimitive, that is, if $N \lhd H$, then $C$ covers a unique block of $\cO N$;
\item[(R2)] If $N \lhd H$ and $C$ covers a nilpotent block of $\cO N$, then $N \leq O_p(H)Z(H)$ with $O_{p'}(N) \leq [H,H]$ cyclic. In particular $O_{p'}(G) \leq Z(G)$;

\item[(R3)] $[H:O_{p'}(Z(H))] \leq [G:O_{p'}(Z(G))]$.

Note that $B$ and $C$ have the same Frobenius categories.
\end{enumerate}
\end{lemma}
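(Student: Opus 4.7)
The plan is to proceed by induction on the integer parameter $n(G) := [G:O_{p'}(Z(G))]$. I will show that whenever a pair $(G,B)$ fails either (R1) or (R2), one may replace it by a pair $(H',C')$ where $C'$ is basic Morita equivalent to $B$, has defect group isomorphic to $D$, and satisfies $n(H')<n(G)$. Iterating produces a terminal pair $(H,C)$ satisfying (R1) and (R2), and monotonicity of the reductions yields $n(H) \leq n(G)$, which is (R3). Preservation of the Frobenius category throughout is automatic because basic Morita equivalences preserve fusion.

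To remove a failure of (R1), I would first observe that if $B$ covers a block $b$ of $\cO N$ for some $N \lhd G$ whose stabiliser $T := {\rm Stab}_G(b)$ is proper in $G$, then the Fong--Reynolds correspondence (which induces a basic Morita equivalence) yields a block $B_T$ of $\cO T$ with defect group still isomorphic to $D$. Since $O_{p'}(Z(G)) \leq Z(T)$ and $[G:T]>1$, we have
$$n(T) \leq [T:O_{p'}(Z(G))] < n(G),$$
so the parameter strictly decreases. To remove a failure of (R2) for a $B$ already quasiprimitive, suppose $B$ covers a nilpotent block $b$ of $\cO N$ for which $N \not\leq O_p(G)Z(G)$ or $O_{p'}(N) \cap [G,G]$ is non-cyclic. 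The K\"ulshammer--Puig analysis from~\cite{kp90}, combined with Puig's source-algebra theorem for nilpotent blocks, replaces $(G,B)$ by a pair $(\widetilde L,\widetilde C)$ in which $\widetilde L$ is an appropriate central $p'$-extension of $N_G(D\cap N,b_{D\cap N})/N$, and $\widetilde C$ is a block of $\cO \widetilde L$ that is basic Morita equivalent to $B$ with a defect group isomorphic to $D$.

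The main obstacle is verifying that this second reduction strictly decreases $n$. The key point is that the non-central portion of $N$ responsible for the failure of (R2) is absorbed into $\cO(D \cap N)$ via the source-algebra structure of the nilpotent block $b$, while the $p'$-part that is not already covered by $O_{p'}(Z(G))$ is pushed into $O_{p'}(Z(\widetilde L))$; careful bookkeeping of these two contributions shows $n(\widetilde L)<n(G)$. Since $n$ takes positive integer values, the alternating reductions terminate after finitely many steps and produce the required $(H,C)$.
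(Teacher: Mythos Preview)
Your proposal follows the same strategy as the paper, which proves the lemma by citation to~\cite[Proposition 6.1]{ae23} (itself extracted from~\cite{eel20}): iterate Fong--Reynolds and K\"ulshammer--Puig reductions, each of which strictly decreases $[G:O_{p'}(Z(G))]$, until (R1) and (R2) hold. One small imprecision: in the K\"ulshammer--Puig step, the replacement group $\widetilde L$ is a $p'$-central extension of $G/N$ (with the defect group $Q=D\cap N$ of the nilpotent block $b$ built back in), not of $N_G(D\cap N,b_{D\cap N})/N$; since $b$ is already $G$-stable after the Fong--Reynolds step, no normaliser appears. With that correction your outline matches the cited argument.
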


\begin{proof}
This is~\cite[Proposition 6.1]{ae23}, in which Fong-Reynolds and K\"ulshammer-Puig reductions are applied repeatedly. It is noted in the proof of~\cite[Proposition 6.1]{ae23} that application of these reductions reduces $[G:O_{p'}(Z(G))]$, hence (R3).
\end{proof}

We call the pair $(H,C)$, where $C$ is a block of $\cO H$, \emph{reduced} if it satisfies conditions (R1) and (R2) \thref{reduced_block}. If the group is clear, then we just say $C$ is reduced.

Before proceeding we recall the definition and some properties of the generalized Fitting subgroup $F^*(G)$ of a finite group $G$. Details may be found in~\cite{asc00}. A \emph{component} of $G$ is a subnormal quasisimple subgroup of $G$. The components of $G$ commute, and we define the \emph{layer} $E(G)$ of $G$ to be the normal subgroup of $G$ generated by the components. It is a central product of the components. The \emph{Fitting subgroup} $F(G)$ is the largest nilpotent normal subgroup of $G$, and this is the direct product of $O_r(G)$ for all primes $r$ dividing $|G|$. The \emph{generalized Fitting subgroup} $F^*(G)$ is $E(G)F(G)$. A crucial property of $F^*(G)$ is that $C_G(F^*(G)) \leq F^*(G)$, so in particular $G/F^*(G)$ may be viewed as a subgroup of $\Out(F^*(G))$. 

\bigskip

\textbf{Proof of \thref{main_theorem}}
Let $B$ be a $2$-block of $\cO G$ for a finite group $G$ with defect group $D$ of rank $4$ with $[G:O_{2'}(Z(G))]$ minimised such that $B$ is not Morita equivalent to any of the blocks listed in the statement of the theorem, or that the inertial quotient of $B$ is not as stated. In the remainder of the proof, the reader may check that the inertial quotients are respected at each step. By \thref{reduced_block} we may assume that $(G,B)$ is reduced. 

If $D \lhd G$ and $B$ has inertial quotient $E_B$, then by the main result of~\cite{ku85} $B$ is source algebra equivalent to either: (i) $D \rtimes E_B$ when $E_B$ has cyclic Sylow $r$-subgroups for every $r$, that is, for every inertial quotient other than $C_3 \times C_3$; a block of $D \rtimes 3_+^{1+2}$ or $D \rtimes 3_-^{1+2}$ when $E_B \cong C_3 \times C_3$, where $Z(3^{1+2})$ acts trivially on $D$. By \thref{faithfulblocks} the non-principal blocks of $D \rtimes 3_+^{1+2}$ and $D \rtimes 3_-^{1+2}$ form a single Morita equivalence class. Hence $B$ is Morita equivalent to a block listed in the statement of the theorem, a contradiction. Hence $D$ is not normal in $G$.

Let $b^*$ be the unique block of $\cO F^*(G)$ covered by $B$.

Write $L_1,\ldots,L_t$ for the components of $G$, so $E(G)=L_1\cdots L_t \lhd G$. Note that $G$ permutes the $L_i$. There must be at least one component, since otherwise $b^*$ is nilpotent and so $F^*(G)=Z(G)O_2(G)$. But $O_2(G) \leq D$ and $D$ is abelian, so we would have $D \leq C_G(F^*(G)) \leq F^*(G)=Z(G)O_2(G)$, so that $D \lhd G$, a contradiction.

Write $b_E$ for the unique block of $E(G)$ covered by $B$ and $b_i$ for the unique block of $L_i$ covered by $b_E$. We claim that no $b_i$ can be nilpotent (in our minimal counterexample).

Let $Z=O_2(Z(E(G)))$. Write $\bar{b}_E$ for the unique block of $\overline{E(G)}:=E(G)/Z$ corresponding to $b_E$ and $\bar{b}_i$ for the unique block of $\bar{L}_i$ corresponding to $b_i$. Writing $M:=\overline{L}_1 \times \cdots \times \overline{L}_t$, where $\overline{L}_i:=L_iZ/Z$, there is a $2'$-group $W \leq Z(M)$ and a block $b_M$ of $M$ with $W$ in its kernel such that $\overline{E(G)} = M/W$ and $b_M$ is isomorphic to $\bar{b}_E$. Then $D \cap E(G)$ is a defect group for $b_E$, $(D \cap E(G))/Z$ is a defect group for $\bar{b}_E$ and $b_M$ has defect groups isomorphic to $(D \cap E(G))/Z$. Then $\bar{b}_i$ has defect group $D_i= ((D \cap E(G))Z) \cap \bar{L}_i$. We have that $b_M=\bar{b}_1 \otimes \cdots \otimes \bar{b}_t$ and $b_M$ has defect group $D_1 \times \cdots \times D_t$. Suppose $b_j$ is nilpotent for some $j$. Let $J \subseteq \{1,\ldots,t\}$ correspond to the orbit of $L_j$ under the permutation action of $G$ on the components. Define $L_J \lhd G$ to be the product of the $L_i$ for $i \in J$, and write $b_J$ for the unique block of $L_J$ covered by $b_E$. For $i \in J$, since $b_j$ is nilpotent, so is $b_i$, hence so is $\bar{b}_i$. Hence the unique block $\bar{b}_J$ of $L_J/Z$ corresponding to $b_J$ is also nilpotent (to see this, observe that as above $L_J/Z$ is the quotient of $\bigtimes_{i \in J} \bar{L}_i$ by a central $2'$-group and that products of nilpotent blocks are nilpotent). It follows that $b_J$ is nilpotent by~\cite{wa94} (where the result is stated over $k$, but follows over $\cO$ immediately), contradicting that $B$ is reduced since $L_J$ is not contained in $O_2(G)Z(G)$. Hence no $b_i$ is nilpotent. In particular, it follows that no $D_i$ can have rank one, and so $t \leq 2$.

Now it follows from Schreier's conjecture that $G/E(G)$ is solvable. By Lemma \ref{solvablequotient:lem} $DE(G)/E(G)$ is a Sylow $2$-subgroup of $G/E(G)$. Since $DE(G)/E(G)$ is abelian, it follows that $G/E(G)$ has $2$-length at most one, so there are normal subgroups $N_i$ of $G$ such that $E(G)=N_0 \lhd N_1 \lhd N_2 \lhd N_3=G[b_E]$ with $N_1/N_0$, $N_3/N_2$ of odd order and $N_2/N_1$ a $2$-group. Further $D \leq N_2$ by Proposition \ref{Dabelian_inner:prop}. Write $b_{N_i}$ for the unique block of $\cO N_i$ covered by $B$.

Suppose that $t=2$. Write $\bar{D}_i:=D_i/O_2(Z(L_i))$, which must have rank $2$ for each $i$. Note that $O_2(G) \leq Z(E(G))$, otherwise a quotient of $D$ would have rank greater than $4$. Further $\bar{D}_i \cong (C_{2^{n_i}})^2$ for some $n_i$, otherwise $\bar{b}_i$, and so $b_i$, would be nilpotent. Now $D_i \cong (C_{2^{n_i}})^2 \times O_2(Z(L_i))$, again since otherwise $b_i$ would be nilpotent. It follows that $O_2(Z(L_i))=1$ for each $i$, hence $O_2(G)=1$. Further note that each $L_i$ is normal in $G$, since $N_G(L_i)$ has index at most $2$ and $G=DN_G(L_i)$. If $G \neq N_G(L_i)$, then some element of $D$ must permute $L_1$ and $L_2$, contradicting our hypothesis that $D$ is abelian. It follows from~\cite[Theorem 1.1]{ekks14} and~\cite{li94} that $b_i$ is Morita equivalent to $\cO G_n$ for some $n \geq 1$ or to $B_0(\cO A_5)$. Further, $b_E$ is isomorphic (in fact basic Morita equivalent) to a block $b_1 \otimes b_2$ of $L_1 \times L_2$.

Suppose $b_i$ is Morita equivalent to $B_0(\cO A_5)$ for each $i$. Then by~\cite[Theorem 1.5]{bkl20} $\Pic(b_i) \cong C_2$, and so $[G:G[b_i]]$ divides $2$. But then $G=DG[b_i]=G[b_i]$. It follows that $G=G[b_E]$. Since the inertial quotient of $b_E$ is $C_3 \times C_3$, it follows that $D=[D,N_{N_2}(D,(b_{N_2})_D)]$, which is contained in $N_1$ by Proposition \ref{defect_group_factor:prop}. We have shown that $[G:E(G)]$ is odd, hence by \thref{innerautos} $B$ is Morita equivalent to $b_E$, and so to $B_0(\cO (A_5 \times A_5))$, a contradiction.

Suppose that $b_1$ is Morita equivalent to $B_0(\cO A_5)$ and $b_2$ to $\cO G_{2^n}$ for some $n$. Then by \thref{covers_inertial}(\ref{C3xC3incA5}) $B$ is Morita equivalent to a block in our list, a contradiction.

Suppose that $b_1$ and $b_2$ are Morita equivalent to $\cO G_{2^{n_1}}$ and $\cO G_{2^{n_2}}$ for some $n_1, n_2 \in \NN$. Then by \thref{Gn_inertial} $b_1$ and $b_2$ are both inertial. Hence $b_E$ is inertial, and so by \thref{covers_inertial} $B$ is inertial, a contradiction.

We have ruled out $t=2$, so now suppose that $t=1$, so that $E(G)=L_1$ is quasisimple. We refer to \thref{qsclassification} for the possibilities for $L_1$ and $b_1$. For ease of notation, write $N=E(G)$ and $b=b_E$. Note that $F^*(G) \cong O_2(G)O_{2'}(G)N$. Here we have already established that $O_{2'}(G) \leq Z(G)$, however it is not immediately the case that $O_{2'}(G) \leq N$, since in principal $G/O_{2'}(G)$ may have Schur multiplier larger than that of $N/Z(N)$. Since $F^*(G)$ is self-centralizing, $G/F^*(G)$ is isomorphic to a subgroup of $\Out(F^*(G))$.

We first consider the cases $N/Z(N) \cong SL_2(8)$, $SL_2(16)$, $J_1$, $Co_3$ and ${}^2G_2(3^{2m+1})$, where $m \in \NN$. In each case $D \cap N$ has rank $3$ or $4$, so $F^*(G) \cong N C_{2^n}$ for some $n$. Also $N/Z(N)$ has cyclic odd order outer automorphism group and trivial Schur multiplier. It follows from~\cite[Lemma 3.4]{ea01} that $\Aut(N/Z(N)$ also has trivial Schur multiplier in each of these cases. Hence $G \cong C^{2^n} \times H$ for some $H$ with $N \leq H \leq \Aut(N)$.

Suppose that $N \cong SL_2(8)$ and $b$ is the principal block, which has elementary abelian defect group of order $8$ and inertial quotient $C_7$. Note that $\Out(SL_2(8)) \cong C_3$. Then by the above $G \cong H \times C_{2^n}$, where $SL_2(8) \leq H \leq \Aut(SL_2(8))$.  Hence $B$ is Morita equivalent to the principal block of $SL_2(8) \times C_{2^n}$ or $\Aut(SL_2(8)) \times C_{2^n}$.

Suppose that $N \cong SL_2(16)$ and $b$ is the principal block, which has elementary abelian defect group of order $8$ and inertial quotient $C_{15}$. Note that $\Out(SL_2(16)) = 1$. Hence $G=N$ and we are done in this case.

Suppose that $N \cong J_1$ and $b$ is the principal block, which has elementary abelian defect group of order $8$ and inertial quotient $C_7 \rtimes C_3$. Note that $\Out(J_1)=1$, so $G \cong J_1 \times C_{2^n}$ and we are done in this case.

Suppose that $N \cong \ {}^2G_2(3^{2m+1})$ for some $m \in \NN$ and $b$ is the principal block, which has elementary abelian defect group of order $8$ and inertial quotient $C_7 \rtimes C_3$. Then $G \cong H \times C_{2^n}$, where ${}^2G_2(3^{2m+1}) \leq H \leq \Aut({}^2G_2(3^{2m+1}))$. It follows from~\cite[Proposition 3.1]{ea16} that $B$ is Morita equivalent to $b$, and these blocks have the same inertial quotient. In turn $b$ is Morita equivalent to $B_0(\cO (\Aut(SL_2(8)) \times C_{2^n}))$ by~\cite[Example 3.3]{ok97}.

Suppose that $N \cong Co_3$ and $b$ is the non-principal block with elementary abelian defect group of order $8$. Note that $\Out(Co_3)=1$, so $G \cong Co_3 \times C_{2^n}$. By~\cite{kmn11} $b$ is Morita equivalent to $B_0(\cO \Aut(SL_2(8)))$, so $B$ is Morita equivalent to $B_0(\cO (\Aut(SL_2(8)) \times C_{2^n}))$.

We now move on to case (iii) of \thref{qsclassification}. Then $b$ is Morita equivalent to the principal block of $A_5 \times P$ or $A_4 \times P$ for some abelian $2$-group $P$ of rank at most $2$. Since $G/N$ is solvable, \thref{covers_inertial}(\ref{C3_1},\ref{A5factor}) apply, and $B$ is Morita equivalent to a block on our list, a contradiction.

It remains to consider the case that $b$ is nilpotent covered. By~\cite[4.3]{pu11} $b$ is inertial. Since $F^*(G) = NZ(G)O_2(G)$ and $O_2(G)$ has rank at most $2$, it follows that $\Out(O_2(G))$ and $\Out(N)$, and so $\Out(F^*(G))$ are solvable. It follows from \thref{covers_inertial} that $B$ is Morita equivalent to a block in our list, a contradiction. We have covered each possibility for $b$ as given in \thref{qsclassification}, so we have established the Morita equivalences in all cases.

Finally we observe that parts (a) and (b)(iv) are proved in~\cite{McK} and~\cite{am20}.
\hfill $\Box$

\section{Derived equivalences and Brou\'{e}'s abelian defect group conjecture}
\label{broue_section}

In this section we prove \thref{broue}.

We first recall K\"ulshammer-Puig classes of blocks. Let $D$ be a defect group for a block $B$ with Frobenius category $\cF$. Following the presentation in~\cite[Section 8.14]{lin2}, a K\"ulshammer-Puig class is an element of $H^2(\Aut_\mathcal{F}(Q)) \cong H^2(E,k^\times)$. As we have used in \thref{normaldefect}, by~\cite[Theorem 6.14.1]{lin2} the Morita equivalence class of a block with normal defect group is determined by the inertial quotient and the K\"ulshammer-Puig class. If $E$ has cyclic Sylow subgroups for all primes, then $H^2(E,k^\times)$ is trivial, so in this case the Morita equivalence class is determined just by $E$. This is the case for all blocks considered in this paper except for those with inertial quotient $C_3 \times C_3$. Hence, since inertial quotients are preserved by the Morita equivalences in \thref{main_theorem}, in order to prove \thref{broue} for inertial quotients other than $C_3 \times C_3$ it suffices to show that in each of (b)(i)-(iv) all blocks are derived equivalent when the defect groups are isomorphic (the result is trivial for blocks in \thref{main_theorem}(a)). This follows from~\cite[Theorem 4.36]{cr13} since for the groups in each of (b)(i)-(iv), the principal block of the normalizer of a Sylow $2$-subgroup $P$ is Morita equivalent to $P \rtimes E$, where $E$ is the inertial quotient. This proves \thref{broue} when $E \not\cong C_3 \times C_3$.

Now suppose that $B$ has inertial quotient $E \cong C_3 \times C_3$. We must determine whether the Brauer correspondent $b$ of $B$ in $N_G(D)$ is Morita equivalent to $\cO (D \rtimes E)$ or to $\cO (D \rtimes 3_+^{1+2})$. A nonprincipal block of $D \rtimes 3_+^{1+2}$ has just one simple module whilst all other blocks in \thref{main_theorem}(b)(v) have nine simple modules. However by~\cite{ru22} and~\cite[Proposition 5.5]{kr89} $l(B)=l(b)$, so we may distinguish the Morita equivalence class of $b$ by $l(B)$. By~\cite[Theorem 4.36]{cr13} the principal blocks occurring in (b)(v) are derived equivalent, so \thref{broue} follows.

\begin{center} {\bf Acknowledgments}
\end{center}
\smallskip
We thank Benjamin Sambale for some useful observations, and the referee for their careful reading of the manuscript. 

The first author is a member of the editorial board of the Journal of the London Mathematical Society.


\begin{thebibliography}{99}

\bibitem{ab79} J.~L.~Alperin and M.~Brou\'{e}, \emph{Local methods in block theory}, Ann. Math. {\bf110} (1979), 143--157.

\bibitem{ae23} J.~An and C.~W.~Eaton, \emph{Morita equivalence classes of blocks with extraspecial defect groups $p_+^{1+2}$}, arxiv:2310.02150

\bibitem{ar21} C.~G.~Ardito, \emph{Morita equivalence classes of blocks with elementary abelian defect groups of order $32$}, J. Algebra {\bf573} (2021), 297-335.

\bibitem{ar20} C.~G.~Ardito, \emph{Morita equivalence classes of principal blocks with elementary abelian defect groups of order $64$}, Tr. Inst. Mat. Mekh. {\bf27} (2021), 220--239.

\bibitem{am20} C.~G.~Ardito and E.~McKernon, \emph{$2$-blocks with an abelian defect group and a freely acting cyclic inertial quotient}, arxiv:2010.08329.

\bibitem{as21} C.~G.~Ardito and B.~Sambale, \emph{Brou\'e's conjecture for $2$-blocks with elementary abelian defect groups of order $32$}, Adv. Group Theory Appl. {\bf12} (2021), 71--78.

\bibitem{asc00} M.~Aschbacher, \emph{Finite group theory}, Cambridge Studies in Advanced Mathematics {\bf10}, Cambridge university Press (1986).

\bibitem{ako} M.~Aschbacher, R.~Kessar and R.~Oliver, \emph{Fusion systems in algebra and topolgy}, London Mathematical Society Lecture Note Series \textbf{391}, Cambridge University Press (2011).

\bibitem{ben1} D.~Benson, \emph{Representations and cohomology. I}, Cambridge Studies in Advanced Mathematics {\bf30}, Cambridge University Press (1991).

\bibitem{be89} C.~Bessenrodt, \emph{The isomorphism type of an abelian defect group of a block is determined by its modules}, J. London Math. Soc. {\bf39} (1989), 61–-66.

\bibitem{bkl20} R.~Boltje, R.~Kessar, and~M. Linckelmann, \emph{On Picard groups of blocks of finite groups}, J. Algebra {\bf558} (2020), 70--101.

\bibitem{bdr17} C.~Bonnaf\'{e}, J.-F.~Dat and R.~Rouquier, \emph{Derived categories and Deligne-Lusztig varieties II}, Ann. of Math. {\bf185} (2017), 609--670.

\bibitem{br90} M.~Brou\'{e}, \emph{Isom\'etries parfaites, types de blocs, cat\'egories d\'eriv\'ees}, Ast\'erisque {\bf181-182} (1990), 61--92.

\bibitem{cekl11} D.~A.~Craven, C.~W.~Eaton, R.~Kessar and M.~Linckelmann, {The structure of blocks with a Klein four defect group}, Math. Z. {\bf268} (2011), 441--476.

\bibitem{cr13} D.~A.~Craven and R.~Rouquier, \emph{Perverse equivalences and Brou\'{e}’s conjecture}, Adv. Math. {\bf248} (2013), 1--58.

\bibitem{da73} E.~C.~Dade, \emph{Block extensions}, Ill. J. Math. {\bf17} (1973), 198-272.

\bibitem{ea01} C.~W.~Eaton, \emph{On finite groups of $p$-local rank one and conjectures of Dade and Robinson}, J. Algebra {\bf238} (2001), 623-642.

\bibitem{ea16} C.~W.~Eaton, \emph{Morita equivalence classes of $2$-blocks of defect three}, Proc. AMS {\bf144} (2016), 1961--1970.

\bibitem{ea19} C.~W.~Eaton, \emph{Morita equivalence classes of blocks with elementary abelian defect groups of order $16$}, arXiv:1612.03485v4.

\bibitem{ea24} C.~W.~Eaton, \emph{Blocks whose defect groups are Suzuki $2$-groups}, arXiv:2401.04028.

\bibitem{eel20} C.~W.~Eaton, F.~Eisele and M.~Livesey, \emph{Donovan's conjecture, blocks with abelian defect groups and discrete valuation rings}, Math. Z. {\bf295} (2020), 249--264.

\bibitem{ekks14} C.~W.~Eaton, R.~Kessar, B.~K\"{u}lshammer and B.~Sambale, {\em $2$-blocks with abelian defect groups}, Adv. Math. {\bf254} (2014), 706--735.

\bibitem{eks12} C.~W.~Eaton, B.~Külshammer and B.~Sambale, \emph{$2$-blocks with minimal nonabelian defect groups, II}, J. Group Theory {\bf15} (2012), 311--321.

\bibitem{el18} C.~Eaton and M.~Livesey, \emph{Classifying blocks with abelian defect groups of rank $3$ for the prime $2$}, J. Algebra {\bf515} (2018), 1--18.

\bibitem{el19} C.~W.~Eaton and M.~Livesey, \emph{Towards Donovan's conjecture for abelian defect groups}, J. Algebra {\bf519} (2019), 39--61.

\bibitem{el19b} C.~W.~Eaton and M.~Livesey, \emph{Donovan's conjecture and blocks with abelian defect groups}, Proc. AMS {\bf147} (2019), 963-970.

\bibitem{el20} C.~W.~Eaton and M.~Livesey, \emph{Some examples of Picard groups of blocks}, J. Algebra \text{558} (2020), 350--370.

\bibitem{ei16} F.~Eisele, \emph{Blocks with a generalized quaternion defect group and three simple modules over a $2$-adic ring}, J. Algebra 456 (2016), 294--322.

\bibitem{er87} K.~Erdmann, \emph{Algebras and dihedral defect groups}, Proc. LMS {\bf54} (1987), 88--114.

\bibitem{gma22} D.~Garc\'ia-Lucas, L.~Margolis and A.~del R\'io, \emph{Non-isomorphic 2-groups with isomorphic modular group algebras}, J. Reine Angew. Math. {\bf783} (2022), 269–-274.

\bibitem{gor} D.~Gorenstein, \emph{Finite groups}, Chelsea (1980).

\bibitem{kkl12} R.~Kessar, S.~Koshitani and M.~Linckelmann, \emph{Conjectures of
Alperin and Brou\'{e} for $2$-blocks with elementary abelian defect groups of order $8$}, J. Reine Angew. Math. \textbf{671} (2012), 85--130.

\bibitem{kr89} R.~Kn\"orr and G.~R.~Robinson, \emph{Some remarks on a conjecture of Alperin}, J. London Math. Soc. {\bf39} (1989), 48--60.

\bibitem{kk96} S.~Koshitani and B.~K\"ulshammer, \emph{A splitting theorem for blocks}, Osaka J. Math. \textbf{33} (1996), 343--346.

\bibitem{kmn11} S.~Koshitani, M\"{u}ller, J. and F.~Noeske, Felix, \emph{Brou\'{e}'s abelian defect group conjecture holds for the sporadic simple Conway group $Co_3$}, J. Algebra {\bf348} (2011), 354–-380.

\bibitem{ku85} B.~K\"ulshammer, \emph{Crossed products and blocks with normal defect groups}, Comm. Alg. {\bf13} (1985), 147--168.

\bibitem{ku90} B.~K\"ulshammer, \emph{Morita equivalent blocks in Clifford theory of finite groups}, Ast\'erisque {\bf181-182} (1990), 209--215.

\bibitem{ku95} B.~K\"ulshammer, \textit{Donovan's conjecture, crossed products and algebraic group actions}, Israel J. Math. \textbf{92} (1995), 295--306.

\bibitem{kp90} B.~K\"ulshammer and L.~Puig, \emph{Extensions of nilpotent blocks}, Invent. Math. \textbf{102} (1990), 17--71.

\bibitem{li94} M.~Linckelmann, \emph{The source algebras of blocks with a Klein four defect group}, J. Algebra \textbf{167} (1994), 821--854.

\bibitem{lin1} M.~Linckelmann, \emph{The block theory of finite group algebras, Volume I}, London Math. Soc. Student Texts 91, Cambridge University Press (2018).

\bibitem{lin2} M.~Linckelmann, \emph{The block theory of finite group algebras, Volume II}, London Math. Soc. Student Texts 92, Cambridge University Press (2018).

\bibitem{li21} M.~Livesey, \emph{On Picard groups of blocks with normal defect groups}, J. Algebra \textbf{566} (2021), 94–-118.

\bibitem{ma22} N.~Macgregor, \emph{Morita equivalence classes of tame blocks of finite groups}, J. Algebra \textbf{608} (2022), 719--754.

\bibitem{mar96} A.~Marcus, \emph{On equivalences between blocks of group algebras: Reduction to the simple components}, J. Algebra (2) {\bf184} (1996), 372--396.

\bibitem{McK} E.~McKernon, \emph{$2$-blocks with homocyclic defect group and inertial quotient containing a Singer cycle}, PhD thesis, University of Manchester (2020).

\bibitem{mck20} E.~McKernon, \emph{$2$-Blocks whose defect group is homocyclic and whose inertial quotient contains a Singer cycle}, J. Algebra {\bf563} (2020), 30–-48.

\bibitem{ok97} T.~Okuyama, \emph{Some examples of derived equivalent blocks of finite groups}, preprint (1997).

\bibitem{pu88} L.~Puig, \emph{Nilpotent blocks and their source algebras}, Invent. Math. \textbf{93} (1988), 77--116.

\bibitem{pu99} L.~Puig, \emph{On the local structure of Morita and Rickard Equivalences between Brauer blocks}, Progr. Math., {\bf178}, Birkh\"auser Verlag, Basel, 1999.

\bibitem{pu11} L.~Puig, \emph{Nilpotent extensions of blocks}, Math. Z. \textbf{269} (2011), 115-136.

\bibitem{rs87} K.~W.~Roggenkamp, L.~L.~Scott, \emph{Isomorphisms of $p$-adic group rings}, Annal. Math. {\bf126}
(1987), 593–-647.

\bibitem{ru22} L.~Ruhstorfer, \emph{The Alperin-McKay conjecture for the prime $2$}, arxiv:2204.06373 

\bibitem{wa94} A.~Watanabe, \emph{On nilpotent blocks of finite groups}, J. Algebra \textbf{163} (1994), 128--134.

\bibitem{wa05} A.~Watanabe, \emph{On perfect isometries for blocks with abelian defect groups and cyclic hyperfocal subgroups}, Kumamoto J. Math. {\bf18} (2005), 85-92.

\bibitem{wzz18} C.~Wu, K.~Zhang and Y.~Zhou, \emph{Blocks with defect group $\mathbb{Z}_{2^n} \times \mathbb{Z}_{2^n} \times\mathbb{Z}_{2^m}$}, J. Algebra {\bf510} (2018), 469--498.

\bibitem{zh16} Y.~Zhou, \emph{On the $p'$-extensions of inertial blocks}, Proc. Amer. Math. Soc. \textbf{144} (2016), 41–-54.
\end{thebibliography}
\end{document}